\newtheorem{theorem}{Theorem}[section]
\newtheorem{Luna Slice Theorem}[theorem]{Luna Slice Theorem}
\newtheorem{corollary}[theorem]{Corollary}
\newtheorem{lemma}[theorem]{Lemma}
\newtheorem{proposition}[theorem]{Proposition}
\newtheorem{proposition-definition}[theorem]{Proposition-Definition}
\theoremstyle{definition}
\newtheorem{definition}[theorem]{Definition}
\theoremstyle{remark}
\newtheorem{remark}[theorem]{Remark}
\newcommand{\fatdot}{{\scriptscriptstyle \bullet}}
\newcommand{\Ob}{\operatorname{Ob}\nolimits}
\newcommand{\rk}{\operatorname{rk}\nolimits}
\newcommand{\Hom} {\operatorname{Hom}\nolimits}
\newcommand{\Aut} {\operatorname{Aut}\nolimits}
\newcommand{\GL} {\operatorname{GL}\nolimits}
\newcommand{\End} {\operatorname{End}\nolimits}
\newcommand{\Stab} {\operatorname{Stab}\nolimits}
\newcommand{\PGL} {\operatorname{PGL}\nolimits}
\newcommand\CC{{\mathbb C}}
\newcommand\QQ{{\mathbb Q}}
\newcommand\PP{{\mathbb P}}
\newcommand\ZZ{{\mathbb Z}}
\newcommand\HH{{\mathbb H}}
\newcommand\NN{{\mathbb N}}
\newcommand{\YYY}{{\mathcal Y}}
\newcommand{\GGG}{{\mathcal G}}
\newcommand{\UUU}{{\mathcal U}}
\newcommand{\LLL}{{\mathcal L}}
\newcommand{\DDD}{{\mathcal D}}
\newcommand{\FFF}{{\mathcal F}}
\newcommand{\KKK}{{\mathcal K}}
\newcommand{\MMM}{{\mathcal M}}
\newcommand{\CCC}{{\mathcal C}}
\newcommand{\III}{{\mathcal I}}
\newcommand{\OOO}{{\mathcal O}}
\newcommand{\EEE}{{\mathcal E}}
\newcommand{\WWW}{{\mathcal W}}
\newcommand{\TTT}{{\mathcal T}}
\newcommand{\HHH}{{\mathcal H}}
\renewcommand\phi{\varphi}
\newcommand{\gr}{\operatorname{gr}\nolimits}
\newcommand{\At}{\operatorname{At}\nolimits}
\newcommand{\Sch}{\operatorname{Sch}\nolimits}
\newcommand{\ud}{\mathrm{d}}
\newcommand{\Mor}{\operatorname{Mor}\nolimits}
\newcommand{\Spec}{\operatorname{Spec}\nolimits}
\newcommand{\ob}{\operatorname{ob}\nolimits}
\newcommand{\im}{\operatorname{im}\nolimits}
\newcommand{\id}{\operatorname{id}\nolimits}
\newcommand{\Pic}{\operatorname{Pic}\nolimits}
\newcommand{\Tr}{\operatorname{Tr}\nolimits}
\newcommand{\Quot}{\operatorname{Quot}\nolimits}
\newcommand{\Ext}{\operatorname{Ext}\nolimits}
\newcommand{\Supp}{\operatorname{Supp}\nolimits}
\newcommand\lra{{\longrightarrow}}
\newcommand\ra{{\rightarrow}}
\newcommand\rar{{\rightarrow}}
\renewcommand\emptyset{\varnothing}
\begin{document}
\title{Local structure of Moduli Spaces}
\author{Francois-Xavier Machu}
\address{F-X. M.: Department of Mathematical and Statistical Sciences,
632 CAB, University of Alberta, Edmonton, AB T6G 2G1, Canada}
\email{machu@ualberta.ca}

\bigskip
\bigskip
\begin{abstract}
We provide a sketch of the GIT construction of the moduli spaces for the three classes of connections: the class of meromorphic connections 
with fixed divisor of poles $D$ and its subclasses of integrable and integrable logarithmic connections. We use the Luna Slice Theorem
to represent the germ of the moduli space as the quotient of the Kuranishi space by the automorphism group of the central fiber.
This method is used to determine the singularities of the moduli space of connections in some examples.
\end{abstract}
\maketitle
\vspace{1 ex}
\begin{center}\begin{minipage}{110mm}\footnotesize{\bf Key words:} boundedness, connections, differential operators, good quotients, Luna slices, moduli spaces, singularities, stability, versal deformations.
\end{minipage}
\end{center}

\begin{center}\begin{minipage}{110mm}\footnotesize{\bf MSC2000:} 14B12, 14D22, 14F20, 14H60, 32G34. 

\end{minipage}
\end{center}
\vspace{1 ex}

\section*{Introduction}
The main objective of the present paper is an application of the GIT method and Kuranishi spaces to the study
of the local structure of moduli spaces of connections. This is a generalization of the approach
that has been used before by several authors for the study of other moduli spaces, namely, moduli spaces
of semistable sheaves \cite{O'G}, \cite{Dr}, \cite{L-S}, \cite{M-T}. The general idea on the moduli spaces of connections
$(\EEE,\nabla)$ as affine fibers bundles over the moduli spaces of vector bundles $\EEE$ fails at the boundary,
when $\EEE$ is nomore (semi)stable whilst that the pair $(\EEE,\nabla)$ is still semi(stable).
Our approach is conceived for the study of such phenemona occuring at the boundary of the moduli spaces,
and we produce some examples of them.

We consider the following classes of connections over a polarized projective variety $X$:
all the connections with fixed divisor of poles $D$, integrable ones,
integrable logarithmic connections. In addition to the above assumptions, we assume in the logarithmic case, that $X$ is smooth and $D$ is a simple normal crossing divisor.
In each of these classes, there exists an appropriate notion of stability
of $(\EEE,\nabla)$, and the moduli space of stable objects can be constructed
as a GIT quotient under an action of $GL(k)$ for some (big) $k$.
This can be seen for the integrable case
by an easy modification of the proof of Simpson \cite{Sim}, originally written for
regular integrable connections (that is, $D=0$). For the logarithmic case, the moduli
space was constructed by Nitsure \cite{Ni}. The moduli space
for logarithmic connections with parabolic structure at poles was constructed in
\cite{I-Iw-S} for the case $\dim X=1$. It turns out that the nonintegrable case is completely similar to
the integrable case and use this occasion to state the relevant results on the existence of quasi-projective schemes at no additional cost.

In all these cases a standard argument using the Luna slice theorem provides
a versal deformation of $(\EEE,\nabla)$ whose base $\WWW$ is an affine scheme
endowed with an action of the group $H=\Aut (\EEE,\nabla)=\Stab_{GL(k)}(\EEE,\nabla)$,
and the germ of the moduli space at $[(\EEE,\nabla)]$ is isomorphic to the
germ of the GIT quotient $\WWW// H$. On the other hand, the Kuranishi space $\KKK$
of $(\EEE,\nabla)$ is the formal completion of $\WWW$; it carries a natural
action of $H$, and the quotient $\KKK//H$ is the formal neighborhood of
$[(\EEE,\nabla)]$ in the moduli space. We use this method to determine the singularities of the moduli space of connections
in some examples.

The problem of studying moduli spaces of connections is interesting in both integrable and nonintegrable cases.
In the integrable case, there are a lot of works on the moduli 
spaces of meromorphic connections (with either regular or irregular singularities), and the moduli problem is often regarded as
solved when a parameterization is given by some multi-valued analytic functions \cite{Kor-1}, \cite{Kor-2}, \cite{EG} and \cite{Gru-Kri}. 
It is interesting to study the same moduli problem  in the algebraic framework, where the importance of moduli spaces is due to the fact
that they appear as objects of the geometric Langlands program.
The most intriguing question is the relation between the moduli space of connections and that
of underlying vector bundles. The natural map forgetting the second component of the pair 
$(\EEE,\nabla)$ is only defined over the locus of semistable vector bundles for these ones
have a consistent moduli theory. Our method provides the techniques to describe the local structure of this map
over the boundary of the moduli space of stable vector bundles.\\
The interest to nonintegrable connections in the framework of  algebraic geometry was first aroused by the work of
Bloch-Esnault \cite{Blo-Es}.  This field is rich in problems and unexplored phenomena. As note the authors
of \cite{Blo-Es}, no a single example of regular nonintegrable connection over a smooth projective variety is known.
There are a lot of meromorphic ones and an interesting problem is to study the higher Abel Jacobi maps
from our moduli spaces towards the Deligne cohomology given by differential characters of Bloch-Esnault.
Another occurence of nonintegrable connections in algebraic geometry is related to Balaji-Kollar \cite{Ba-Kol},
Balaji-Parameswaran \cite{Ba-Par}, where the algebraic holonomy group of a variety is constructed, and it is interesting
to know whether the latter has a description in terms of nonintegrable connections.

In Sect. 1, we recall the GIT construction of moduli spaces of sheaves. To construct moduli spaces of connections, we follow
the approach of \cite{Sim}. He introduces the notion of a sheaf of rings
of differential operators $\Lambda$ over a projective scheme $X$ and constructs quasi-projective moduli schemes
of (semi)-stable coherent $\Lambda$-modules. Plugging in different $\Lambda$'s, one obtains moduli spaces
of sheaves ($\Lambda=\OOO_X$), integrable regular connections ($\Lambda=\DDD_X$, the standard sheaf of differential operators),
integrable logarithmic connections ($\Lambda\subset\DDD_X$ is the sheaf of subrings generated by logarithmic vector field
$\TTT_X\langle D\rangle$, where $X$ is assumed to be smooth and $D$ is a normal crossing divisor), Higgs bundles ($\Lambda=\gr\fatdot\DDD_X=
\oplus_{m={0}}^{\infty}S^m\TTT_X)$ and others.

We extend the approach of Simpson to the case of non-integrable connections, regular or meromorphic with fixed divisor of poles $D$.
To this end, we had to slightly generalize Simpson's notion of a sheaf of rings of differential operators.
As in Simpson's definition, our $\Lambda$ is a filtered $\OOO_X$-bialgebra, satisfying a bunch of axioms (see Sect. \ref{Moduli of connections}), but contrary to Simpson, we do not assume that the graded ring $\gr\fatdot\Lambda$ is commutative. Thus to obtain moduli of 
regular connections on $X$, we set $\Lambda$ to be the sheaf $\boldsymbol{\DDD}_X$ of noncommutative differential operators
(the basic vector fields $\frac{\partial}{\partial x_i}$ associated to some coordinates $(x_1,\dots,x_n)$ do not
satisfy the commutativity relation $\frac{\partial}{\partial x_i}\frac{\partial}{\partial x_j}=\frac{\partial}{\partial x_j}\frac{\partial}{\partial x_i}$). To deal with the meromorphic connections with fixed divisor of poles $D$, we choose for $\Lambda$ the sheaf of subalgebras
of $\boldsymbol{\DDD}_X$ generated by the subsheaf $\TTT_X(-D)$ of $\TTT_X$, consisting of vector field vanishing on $D$.

In order to state the results on moduli spaces, we first introduce the relevant terminology: stable, semistable objects, representable
and corepresentable functors, coarse and fine moduli spaces, Mumford's $m$-regularity and Grothendieck's Quot-scheme.
We base essentially upon the monograph \cite{H-L}, but also use \cite{Sim}, \cite{Ma-1} and \cite{Ma-2}.

In Sect. 2, we introduce the sheaves of rings of differential operators $\Lambda$ to the Simpson and explain how
the ingredients of Simpson's proof are adapted to the nonintegrable case.

In Sect. 3, we state the Luna slice theorem and show that, in our definition of the moduli space of $\Lambda$-modules, it provides a versal deformation of a $\Lambda$-module.

In Sect. 4, we apply the Luna slice theorem to compute, in some examples, the germ of a moduli space of connections. We put in evidence the situations when the map from the moduli space
of connections to that of underlying vector bundles is either undefined, or is not a locally trivial fibration.
An example is given when that the moduli space of semistable connections is a partial compactification
of the locally trivial affine bundle of connections over the stable locus $M_X^s(r,d)$ of the moduli space of vector bundles
"in the limit $\nabla\rar\infty."$

\section{Moduli of sheaves}
\bigskip
In this section, we introduce basic notions on the moduli space of sheaves. We will
follow \cite{H-L}, \cite{Ma-1}, \cite{Ma-2}, \cite{Sim}.
Throughout the section, $X$ will be a polarized projective scheme over a field $k$,
$\OOO_X(1)$ some fixed ample invertible sheaf on $X$.
We will identify vector bundles with their associated locally free sheaves.

\subsection{Stability}
For  any coherent sheaf $E$ on $X$, one defines:
\begin{definition}
The support of $E$ is the closed set $\Supp(E)=\{x\in X\mid E_x\neq 0\}$.
Its dimension is called the dimension of the sheaf $E$ and is denoted by $\dim(E)$.
\end{definition}
\begin{definition}
$E$ is pure of dimension $d$ if $\dim(F)=d$ for all non-trivial coherent 
subsheaves $F\subset E$.
\end{definition}
We only consider the equidimensional coherent sheaves on a polarized projective scheme (or variety) over $k$.

Recall that the Euler characteristic of 
a pure coherent sheaf $E$ of $\OOO_X$-modules on $X$ is $$\chi(E):=\sum_{i=0}^{\dim(E)} (-1)^{i}h^{i}(X, E),$$ where $h^{i}(X, E)=\dim H^{i}(X, E).$
If we fix an ample line bundle $\OOO_X (1)$ on X, then the Hilbert polynomial $P_E$ is given by $$m\mapsto\chi(E\otimes\OOO_X(m));$$
In particular, $P(E)$ can be uniquely written in the form $$P(E, m)=\sum_{i=0}^{\dim(E)} \alpha_i (E)\frac{m^{i}}{i!},$$ with
integer coefficients $\alpha_i (E)$. Futhermore, if $E\neq 0$, the leading coefficient $\alpha_{\dim(E)}$, called the multiplicity, is
always positive.
\begin{definition}
If $E$ is a pure coherent sheaf of $\OOO_X$-modules on $X$ of dimension $d=\dim(X)$, then 
$$\rk(E):=\frac{\alpha_d (E)}{d!}$$ is called the rank of $E$ and $$\deg(E):=\alpha_{d-1}(E)-\rk(E)(d-1)!$$
is called the degree of $E$. The slope of $E$ is $$\mu(E):=\frac{\deg (E)}{\rk(E)}.$$
\begin{remark}
On a smooth projective variety, the Hirzebruch-Riemann-Roch formula shows that $\deg(E)=c_1(E).H^{d-1}$, where $H$ is an ample divisor
defined by a section of $\OOO_{X}(1)$.
\end{remark}
\end{definition}
\begin{definition}
The reduced Hilbert polynomial $p(E)$ ($E\neq0$) is defined by $$p(E, m):=\frac{P(E, m)}{\alpha_{d}(E)}.$$
\end{definition}
\begin{definition}
A pure coherent sheaf $E$ of $\OOO_X$-modules on a scheme $X$ of  dimension $d=\dim(X)$ is semistable (resp. stable) 
if and only if for any proper subsheaf $F$ of $E$, one has $p(F)\leq p(E)$ (resp. $p(F)<p(E)$).
\end{definition} 
\begin{remark}
We here have used the relation of lexicographic order of their coefficients on the polynomial ring $\QQ[m]$.
Explicitly, $f\leq g$ if and only if $f(m)\leq g(m)$ for $m>>0$. Analogously, $f<g$ if and only if $f(m)<g(m)$ for $m>>0$. 
\end{remark}
\begin{definition}
A pure coherent sheaf $E$ of $\OOO_X$-modules on $X$ of dimension $d=\dim(X)$ is $\mu$-
stable (resp. $\mu$-semistable) if $\mu(F)<\mu(E)$ (resp. $\mu(F)\leq\mu(E)$) for all subsheaves $F\subset E$ with 
$0<\rk(F)<\rk(E)$. 
\end{definition}

One easily proves
\begin{corollary}
If $E$ is a pure coherent sheaf $E$ of $\OOO_X$-modules on $X$ of dimension $d=\dim(X)$, then one
has the following chain of implications
$$E\textrm{ is $\mu$-stable}\Rightarrow E\textrm{ is stable}\Rightarrow E\textrm{ is semistable}\Rightarrow E\textrm{ is $\mu$-semistable}.$$
\end{corollary}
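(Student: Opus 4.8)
The plan is to reduce the whole chain of implications to a single elementary computation relating the reduced Hilbert polynomial of a $d$-dimensional sheaf to its slope, where $d=\dim X$. First I would record that for every nonzero coherent sheaf $G$ with $\dim G=d$ the reduced Hilbert polynomial has the shape
$$p(G,m)=\frac{m^{d}}{d!}+\frac{\alpha_{d-1}(G)}{\alpha_{d}(G)}\cdot\frac{m^{d-1}}{(d-1)!}+(\text{lower order terms}),$$
so that its leading coefficient is the universal constant $1/d!$, independent of $G$. Then, unravelling the definitions of $\rk$, $\deg$ and $\mu$, one obtains
$$\frac{\alpha_{d-1}(G)}{\alpha_{d}(G)}=\frac{\mu(G)+(d-1)!}{d!},$$
i.e.\ the coefficient of $m^{d-1}$ in $p(G)$ is a strictly increasing affine function of $\mu(G)$. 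The fact I would extract is: for two sheaves of dimension $d$, whose reduced Hilbert polynomials automatically share the leading term, $\mu(G_{1})<\mu(G_{2})$ forces $p(G_{1})<p(G_{2})$ in the lexicographic order, while conversely $p(G_{1})\le p(G_{2})$ forces $\mu(G_{1})\le\mu(G_{2})$, since equality of the degree-$d$ coefficients means the comparison is settled already at the $m^{d-1}$ coefficient or below.

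With this in hand the two soft links are immediate. ``Stable $\Rightarrow$ semistable'' is merely the passage from $<$ to $\le$ in the definitions. For ``semistable $\Rightarrow$ $\mu$-semistable'' I would take a subsheaf $F\subset E$ with $0<\rk F<\rk E$; since $\rk F>0$ we have $\dim F=d$, and since $F$ is a proper nonzero subsheaf, semistability gives $p(F)\le p(E)$, whence $\mu(F)\le\mu(E)$ by the observation above.

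The only implication with any content is ``$\mu$-stable $\Rightarrow$ stable'', and here I would split into two cases. Let $F\subsetneq E$ be proper and nonzero; purity of $E$ forces $\dim F=d$, hence $\rk F>0$, and the exact sequence $0\to F\to E\to E/F\to 0$ gives $\rk F\le\rk E$. If $\rk F<\rk E$, then $\mu$-stability yields $\mu(F)<\mu(E)$, hence $p(F)<p(E)$. If $\rk F=\rk E$, then $Q:=E/F$ is nonzero with $\rk Q=0$, so $\dim Q<d$, $\alpha_{d}(Q)=0$ and $\alpha_{d}(F)=\alpha_{d}(E)$; from $P(F,m)=P(E,m)-P(Q,m)$ together with the positivity of the multiplicity of the nonzero sheaf $Q$, so that $P(Q,m)>0$ for $m\gg 0$, one gets $P(F,m)<P(E,m)$ for $m\gg 0$, and dividing by the common positive integer $\alpha_{d}(E)$ gives $p(F)<p(E)$. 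In either case $p(F)<p(E)$, so $E$ is stable. The one spot that demands care --- the main obstacle, modest as it is --- is precisely this last case $\rk F=\rk E$, which $\mu$-stability cannot detect directly and which must be disposed of by the dimension count on the quotient $E/F$; everything else is routine bookkeeping with Hilbert polynomials.
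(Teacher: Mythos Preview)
Your argument is correct and complete. The paper itself offers no proof at all --- it merely states that ``one easily proves'' the corollary --- so there is nothing to compare your approach against; you have simply supplied the standard verification, including the one nontrivial case $\rk F=\rk E$ in the implication $\mu$-stable $\Rightarrow$ stable, which you handle correctly via the positivity of $P(E/F,m)$ for $m\gg 0$.
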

\subsection{Representable and corepresentable functors}

Let $\CCC$ be a category, $\CCC^{0}$ the opposite category, i.e. the category with the same objects and reversed arrows,
and let $\CCC'$ be the functor category whose objects are the functors $\CCC^{0}\ra\textrm{Sets}$ and whose
morphisms are the natural transformations between functors. The Yoneda Lemma states that the functor $\CCC\rar\CCC'$ 
which associates to $\textit{x}\in\Ob(\CCC)$ the functor $\underbar{\textit{x}}:y\mapsto \Mor_{\CCC}(y,\textit{x})$ embeds $\CCC$ as a full
subcategory into $\CCC'$. A functor in $\CCC'$ of the form $\underbar{\textit{x}}$ is said to be represented by the object $\textit{x}$.

Let $X$ be a projective scheme, $\CCC=\Sch/k$. For a fixed polynomial $P\in\QQ[z]$ define a functor
$$\MMM'_{X}(P):\CCC^{0}\ra\textrm{Sets}$$ as follows. If $S\in\Ob(\Sch/k)$ (that is $S$ is a scheme with a morphism $S\rar\Spec(k)$) , let $\MMM'(S)$ be the set of $S$-flat families $\FFF\rar X\times S$ of vector bundles on $X$ all of whose fibres have Hilbert polynomial $P$. And if $f:S'\rar S$ is a morphism in $(\Sch/k)$, let $\MMM'(f)$ be the map obtained
by pulling back sheaves via $f_X=id_X\times f:$
$$\MMM'_{X}(P)(f):\MMM'_{X}(P)(S)\rar\MMM'_{X}(P)(S'), [F]\rar[f^{*}_X F].$$

If $\FFF\in\MMM'_{X}(P)(S)$ is an $S$-flat family of vector bundles on $X$ with Hilbert polynomial $P$, and if $L$ is an arbitrary line bundle on $S$, then $\FFF\otimes p^{*}(L)$ is also an $S$-flat family of vector bundles on $X$ with a Hilbert polynomial, where $p$ is the natural projection from $X\times S$ to $S$. It is therefore reasonable to consider the quotient $\MMM_{X}(P)=\MMM'_{X}(P)/\sim$, where $\sim$ is the equivalence relation:
$$\FFF\sim\FFF'\iff\FFF\simeq\FFF'\otimes p^{*}L\textrm{ for some $L$ in $\Pic(S)$}.$$ 

\begin{definition}
A functor $\FFF\in\Ob(\CCC')$ is corepresented by $F\in\Ob(\CCC)$ if there is a $\CCC'$-morphism $\alpha:\FFF\rar\underbar{\textit{F}}$ such that any morphism $\alpha':\FFF\rar\underbar{\textit{F}}'$ factors through a unique morphism $\beta:\underbar{\textit{F}}\rar\underbar{\textit{F}}'$.

Assume now that $\CCC$ admits fiber products, then so does $\CCC'$. A functor $\FFF$ is universally corepresented by $\alpha:\FFF\rar\underbar{\textit{F}}$ if for any $T\in\Ob(\CCC)$ and any morphism $\phi:\underbar{\textit{T}}\rar\underbar{\textit{F}}$, the fibre product $\Gamma=\underbar{\textit{T}}\times_{\underbar{\textit{F}}}\FFF$ is corepresented by $T$. 
\end{definition}
\begin{definition}
A coarse moduli scheme of vector bundles on a polarized projective scheme $X$ over $k$ with Hilbert polynomial $P$ is a scheme $M_X(P)$
such that the functor $\MMM_X (P)=\MMM'_X (P)/\sim$ is universally corepresented by $M_X(P)$.
\end{definition}

\begin{definition}
$\FFF$ is represented by $F$ if $\alpha:\FFF\rar\underbar{\textit{F}}$ is a $\CCC'$-isomorphism.
We can rephrase that definition by saying that $F$ represents $\FFF$ if $\Mor_{\CCC}(\YYY, F)=\Mor_{\CCC'}(\underbar{\textit{Y}}, \FFF)$ for all $\YYY\in\Ob(\CCC)$.
\end{definition}
\begin{definition}
A fine moduli space of vector bundles on a polarized projective scheme $X$ over $k$ with Hilbert polynomial $P$ is a scheme $M_X(P)$ which represents the functor $\MMM_X (P)$. In this case, $\MMM_{X}(P)(M_{X}(P))$ contains a universal vector bundle $\UUU$ over $X\times M_{X}(P)$ with the following property: for any $S\in\Sch/k$ and any $\EEE\in\MMM_{X}(P)(S)$, there exists a unique morphism $\phi:S\rar M_{X}(P)$ such that $(id_X\times\phi)^{*}(\UUU)\simeq\EEE$.
\end{definition}

\begin{remark}
If a fine moduli space $M_X (P)$ exists, it is unique up to an isomorphism.
Nevertheless, in general, the functor $\MMM_X (P)$ is not representable.
In fact, there are very few classification problems for which a fine moduli scheme exists.
To get, at least, a coarse moduli scheme, we must somehow restrict the class of vector bundles that
we consider. In \cite{Ma-1} and \cite{Ma-2}, M. Murayama found an answer: (semi)stable vector bundles. 
\end{remark}
If we take families of (semi)stable locally free sheaves with respect to $H=\OOO_X (1)$ only, we get open subfunctors $(\MMM')^{ss}_{X}(P)\subset\MMM'_{X}(P)$, resp $\MMM'_{X}(P)^{s}\subset\MMM'_{X}(P)$ and $\MMM^{ss}_{X}(P)\subset\MMM_{X}(P),$ resp $\MMM^{s}_{X}(P)\subset\MMM_{X}(P)$, and $\MMM^{s}_{X}(P)$ is open in $\MMM^{ss}_{X}(P)$.

\subsection{Construction of moduli space}

A necessary condition for the existence of a coarse moduli space for the functor $\MMM^{ss}_{X}(P)$ as a scheme of finite type 
on $k$ is the boundedness of the family of all semistable vector bundles on $X$ with Hilbert polynomial $P$.
\begin{definition}
A family of isomorphism classes of coherent sheaves of $\OOO_X$-modules on $X$ is bounded if there exists a $k$-scheme $S$
of finite type and a coherent sheaf $F$ of $\OOO_{X\times S}$-modules on $X\times S$ such that the given family is contained
in the set $\{F_{s}\rvert \textrm{$s$ a closed point in $S$.}\}$
\end{definition}
To present $S$ and an $\OOO_{X\times S}$-module $F$ providing the boundedness for $\MMM^{ss}_{X}(P)$, we need the following definition.

\begin{definition}
Let $m$ be an integer. A coherent sheaf $F$ is said to be $m$-regular, if
$$H^{i}(X, F(m-i))=0\textrm{ for all $i>0$}.$$
\end{definition}
\begin{lemma}
For any semistable sheaf $F$ with Hilbert polynomial $P$, there is an integer $m$ such that F is $m$-regular.
\end{lemma}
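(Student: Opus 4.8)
The plan is to establish the uniform bound via the standard Kleiman–Grothendieck boundedness argument together with Castelnuovo–Mumford regularity estimates. First I would recall that for a fixed Hilbert polynomial $P$ and fixed polarization $\OOO_X(1)$, the family of semistable sheaves of dimension $d=\dim X$ with Hilbert polynomial $P$ is bounded — this is Kleiman's theorem, which for semistable sheaves follows from the fact that a semistable sheaf $F$ has bounded subsheaves: the Harder–Narasimhan slopes are all controlled by $\mu(F)$, and one obtains a uniform bound on the minimal degree of a nonzero section twisted by $\OOO_X(m)$. The key numerical input is the Simpson/Le Potier estimate that for a semistable sheaf of multiplicity $\alpha_d(F)$, the dimension $h^0(X,F)$ is bounded above by an explicit polynomial expression in the coefficients of $P$ and the invariants of $X$; this is what makes the family parametrized by a Quot scheme.

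Next I would run the Castelnuovo–Mumford inductive machine. Choosing a generic hyperplane section $H \subset X$, one gets an exact sequence $0 \to F(-1) \to F \to F_H \to 0$ (after restricting to the locus where $H$ is $F$-regular), so that $m$-regularity of $F$ can be deduced from $(m-1)$-regularity properties of $F$ together with $m$-regularity of $F_H$. One shows by induction on $\dim X$ that there is an integer $m_0$ depending only on $P$ (equivalently, only on the coefficients $\alpha_i$) and on $X$ with $\OOO_X(1)$, such that every semistable sheaf with Hilbert polynomial $P$ is $m_0$-regular. The base case $\dim X = 1$ (or $\dim F = 1$) is handled directly: on a curve, semistability bounds $h^1(X,F(m-1)) = h^0(X, F^{\dual}\otimes\omega_X(1-m))$ by a slope inequality, forcing vanishing once $m$ exceeds an explicit bound. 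The inductive step combines the cohomology of the restriction sequence with the lemma (attributed to Mumford) that $m$-regularity implies $(m+1)$-regularity, so that the finitely many cohomological conditions propagate.

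The main obstacle I expect is making the bound \emph{uniform} over the whole semistable family rather than sheaf-by-sheaf: for a single sheaf $F$, $m$-regularity for some $m$ is automatic by Serre vanishing, but the content of the lemma is that $m$ can be chosen independently of $F$. This requires the a priori estimate on $h^0$ of semistable sheaves and their saturated subsheaves — equivalently a uniform bound on the function $m \mapsto h^0(X,F(m))$ over the family — which in turn rests on the boundedness of the set of slopes appearing in Harder–Narasimhan filtrations of restrictions $F_H$ to hyperplane sections, i.e. on a restriction theorem controlling how semistability behaves under passage to $H$. I would cite \cite{H-L} (and \cite{Sim}) for the precise form of these estimates and simply assemble them: fix $d$ and the $\alpha_i$, invoke Kleiman boundedness to get that the family lives in a Quot scheme, extract the uniform $h^0$ bound, feed it into the Castelnuovo–Mumford induction, and read off a single $m$ that works for all semistable $F$ with Hilbert polynomial $P$.
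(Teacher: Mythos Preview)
Your argument is correct, but you are proving a much stronger statement than the one on the page. Read the quantifiers: the lemma says \emph{for any} semistable $F$ \emph{there is} an integer $m$, i.e.\ $\forall F\,\exists m$. The paper's entire proof is the single phrase ``Follows Serre's vanishing Theorem,'' and that is adequate: for one fixed coherent sheaf $F$, Serre vanishing gives $n_0$ with $H^i(X,F(n))=0$ for all $i>0$ and $n\ge n_0$, so $F$ is $m$-regular for any $m\ge n_0+\dim X$. No semistability, no Hilbert polynomial, no boundedness is needed for this per-sheaf claim.

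What you have written is the proof of the \emph{uniform} statement $\exists m\,\forall F$, and you are right that this is what the paper actually uses immediately afterward in Lemma~\ref{bound} (``$m$ is such that $F$ is $m$-regular for all the semistable sheaves $F$''). Your outline---Le Potier/Simpson $h^0$-bounds from semistability, Kleiman boundedness, Castelnuovo--Mumford induction on hyperplane sections---is the standard route to that uniform bound and is essentially what one finds in \cite{H-L} or \cite{Sim}. So your instinct about what is \emph{needed} is correct; the paper simply defers the real work to the cited sources and records only the trivial per-sheaf version here. If you want your write-up to match the paper, one sentence invoking Serre vanishing suffices; if you want to fill the actual gap the paper leaves, keep your argument but relabel it as a proof of the uniform regularity bound.
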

\begin{proof}
Follows Serre's vanishing Theorem. 
\end{proof}
\begin{definition}
Let $(X,\OOO_X(1))$ be a polarized projective scheme over $k$, $S$ a $k$-scheme of finite type, $\CCC=(\Sch/S)$,
$\HHH$ a $S$-flat coherent sheaf of $\OOO_X$-modules with Hilbert polynomial $P$, then we define 
the functor $$\boldsymbol{\Quot_{X/S}(\HHH,P)}:\CCC^{0}\rar \textrm{ Sets}$$ as follows:
If $T$ is a $S$-scheme, then $\boldsymbol{\Quot_{X/S}(\HHH,P)}(T)$ is the set of $T$-flat coherent quotient sheaves $F$ of the sheaf $\HHH_T=\HHH\otimes_{\OOO_{S}}\OOO_{T}$ such that the fibers of $F$ over all the geometric points of the Grassmann projective $S$-scheme $\GGG^r$ have Hilbert polynomial $P$.
\end{definition}
\begin{theorem}
The functor $\boldsymbol{\Quot_{X/S}(\HHH,P)}$ defined above is represented by a projective $S$-scheme $\Quot_{X/S}(\HHH,P)$ with the
universal quotient sheaf $\UUU$.
\end{theorem}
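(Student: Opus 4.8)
The plan is to reduce the representability of the Quot functor to the representability of a suitable Grassmannian functor, following the classical method of Grothendieck as presented in \cite{H-L}. The key geometric input is Mumford's $m$-regularity: because $\HHH$ is $S$-flat with Hilbert polynomial $P$ and the polarized scheme $(X,\OOO_X(1))$ is fixed, there is a uniform integer $m$ (depending only on the numerical data $P$ and the Castelnuovo-Mumford regularity of $\HHH$) such that for every $T$-flat quotient $\HHH_T\twoheadrightarrow F$ appearing in $\Quot_{X/S}(\HHH,P)(T)$, both $F$ and the kernel are $m$-regular on each fibre. This is where one invokes the previous lemma, together with the standard fact that the family of such quotients is bounded, so that a single $m$ works for all of them.

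**Next I would** use $m$-regularity to linearize the problem. For $m$-regular sheaves, twisting by $\OOO_X(m)$ and pushing forward to $S$ (or to $T$) gives a locally free sheaf whose formation commutes with base change, and the quotient $\HHH_T\twoheadrightarrow F$ is completely determined by the quotient of locally free $\OOO_T$-modules $p_{T*}(\HHH_T(m))\twoheadrightarrow p_{T*}(F(m))$ of rank $P(m)$, since $F$ is globally generated in degree $m$ and its higher direct images vanish. Conversely, not every such quotient of sheaves on $T$ comes from a quotient on $X\times T$; one recovers $F$ from the subsheaf generated in degree $m$ and must impose the condition that the resulting quotient is again flat with the correct Hilbert polynomial. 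Thus $\Quot_{X/S}(\HHH,P)$ is identified with a locally closed subfunctor of the Grassmannian functor $\mathbf{Grass}(p_{*}(\HHH(m)), P(m))$ over $S$, which is represented by a projective $S$-scheme $\GGG^r$ carrying its tautological quotient.

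**Then I would** cut out $\Quot_{X/S}(\HHH,P)$ inside this Grassmannian. Over $\GGG^r$ one has the universal quotient of $p_*(\HHH(m))$; re-sheafifying and saturating produces a coherent quotient of $\HHH$ on $X\times\GGG^r$, and the locus where this quotient is flat over the base with Hilbert polynomial exactly $P$ (equivalently, where the fibrewise Hilbert polynomial equals $P$ and the regularity is preserved) is a \emph{closed} subscheme of $\GGG^r$ by semicontinuity of the Hilbert polynomial and the flattening-stratification theorem. Call this closed subscheme $\Quot_{X/S}(\HHH,P)$; it is projective over $S$ because $\GGG^r$ is, and the restriction of the re-sheafified universal quotient is the desired universal quotient sheaf $\UUU$. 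Verifying the universal property is then a formal matching of the two descriptions: a $T$-flat quotient of $\HHH_T$ with Hilbert polynomial $P$ yields, by $m$-regularity, a $T$-point of the Grassmannian landing in the closed subscheme, and this construction is inverse to pulling back $\UUU$.

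**The main obstacle** is the uniformity of the bound $m$: one must know a priori that the family of all quotients with a fixed Hilbert polynomial $P$ is bounded, so that a single twist linearizes every member simultaneously. For quotients of a fixed coherent sheaf $\HHH$ on a projective scheme this boundedness is itself a theorem (Grothendieck's Lemma, cf. \cite{H-L}), proved by an induction on the dimension of the support together with a careful estimate on the coefficients of the Hilbert polynomial of any subsheaf. The remaining points — commutation of $p_*$ with base change for $m$-regular sheaves, representability of the Grassmannian functor, and closedness of the flatness-with-fixed-Hilbert-polynomial locus — are standard once that uniformity is in hand.
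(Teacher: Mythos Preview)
Your sketch is correct and follows precisely the classical Grothendieck argument that the paper invokes: the paper's own proof is simply a reference to Theorem~2.2.4 of \cite{H-L}, whose content is exactly the $m$-regularity--plus--Grassmannian embedding you outline. One small refinement worth noting: flattening stratification only exhibits $\Quot_{X/S}(\HHH,P)$ as a \emph{locally closed} subscheme of the Grassmannian, and projectivity over $S$ is then obtained via the valuative criterion of properness (a family of quotients over the fraction field of a DVR extends by taking the flat closure of the kernel), rather than directly from semicontinuity.
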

\begin{proof}
See Theorem $2.2.4$ of \cite{H-L}.
\end{proof}

\begin{lemma}\label{bound}
As a family representing all the semistable sheaves from $\MMM^{ss}_X(P) (k)$, one can take the universal quotient sheaf $\UUU$ over $\Quot_{X/S}(\HHH,P)$, where $\HHH=k^{\oplus P(m)}\otimes\OOO_X (-m)$ and $m$ is such that $F$ is $m$-regular for all the semistable sheaves $F$  on $X$ with Hilbert polynomial $P$.
\end{lemma}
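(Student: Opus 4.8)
The plan is to prove that the set $\MMM^{ss}_X(P)(k)$ of isomorphism classes of semistable sheaves with Hilbert polynomial $P$ is bounded by exhibiting every such sheaf as a closed-point fibre of the universal quotient $\UUU$ on the \emph{single} projective $S$-scheme $\Quot_{X/S}(\HHH,P)$, taken with $S=\Spec(k)$ and $\HHH=k^{\oplus P(m)}\otimes\OOO_X(-m)$. Since $\Quot_{X/S}(\HHH,P)$ is of finite type over $k$ by the representability theorem stated above, and $\UUU$ is a coherent sheaf on $X\times\Quot_{X/S}(\HHH,P)$, producing such a realization is exactly what the definition of boundedness demands.

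First I would fix $m$ as in the statement, that is, an integer for which every semistable $F$ with Hilbert polynomial $P$ is $m$-regular; the existence of such a \emph{uniform} $m$ is the substantial point and I return to it below. For a fixed semistable $F$, being $m$-regular gives in particular $H^i(X,F(m))=0$ for $i>0$, hence $h^0(X,F(m))=\chi(F(m))=P(m)$; moreover, by the standard Castelnuovo--Mumford argument, $F(m)$ is $0$-regular and therefore globally generated. Choosing an isomorphism $k^{P(m)}\cong H^0(X,F(m))$ and composing with the surjective evaluation morphism $H^0(X,F(m))\otimes\OOO_X\twoheadrightarrow F(m)$, then twisting by $\OOO_X(-m)$, yields a surjection $\HHH=k^{\oplus P(m)}\otimes\OOO_X(-m)\twoheadrightarrow F$. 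As $F$ has Hilbert polynomial $P$, this surjection defines a $k$-point $q_F\in\Quot_{X/S}(\HHH,P)(k)$ with $\UUU_{q_F}\cong F$.

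Then I would conclude: as $F$ ranges over $\MMM^{ss}_X(P)(k)$, the points $q_F$ range over a subset of the closed points of $\Quot_{X/S}(\HHH,P)$, so every semistable sheaf with Hilbert polynomial $P$ occurs among the fibres $\{\UUU_s\mid s\text{ a closed point of }\Quot_{X/S}(\HHH,P)\}$; hence the family is bounded. I would also remark in passing that different choices of the isomorphism $k^{P(m)}\cong H^0(X,F(m))$ produce the $\GL(P(m))$-orbit of $q_F$ and all give the same sheaf $F$ back; this redundancy is irrelevant for boundedness but is precisely the feature exploited when the moduli space is later cut out as a GIT quotient of (a locally closed subscheme of) this Quot-scheme by $\GL(P(m))$.

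The hard part is not the argument above, which is essentially formal once $m$ has been fixed, but the existence of one $m$ that makes \emph{all} semistable sheaves with Hilbert polynomial $P$ simultaneously $m$-regular. The per-sheaf statement recorded in the preceding Lemma is only Serre vanishing and carries no uniformity. The uniform bound is the genuine content of boundedness for semistable sheaves: it rests on estimates for the cohomology (equivalently, for $\alpha_{d-1}$ of saturated subsheaves) of an arbitrary subsheaf of a semistable sheaf, combined with Kleiman's boundedness criterion and Grothendieck's Quot-scheme, and is due to Maruyama. I would invoke this result, in the form proved in \cite{H-L}, rather than reprove it; granting it, the proof is complete.
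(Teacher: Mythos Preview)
Your argument is correct and is precisely the standard one. The paper itself does not supply a proof of this lemma; it is stated without proof, and the subsequent Corollary simply cites it. The paper does, however, spell out the very same construction a few paragraphs later (choosing an isomorphism $V\simeq H^0(F(m))$ and composing with the evaluation map $H^0(F(m))\otimes\OOO_X(-m)\rar F$ to obtain a surjection $\HHH\rar F$, hence a closed point of $\Quot(\HHH,P)$), so your approach is exactly what the paper has in mind. Your observation that the genuinely nontrivial input is the existence of a \emph{uniform} $m$ (as opposed to the per-sheaf Serre vanishing recorded in the preceding lemma), and that this is the theorem of Maruyama treated in \cite{H-L}, is well taken and in fact sharper than what the paper makes explicit.
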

  
\begin{corollary}
The family of semistable sheaves with fixed Hilbert polynomial $P$ on a smooth projective variety $X$ is bounded or, in other words, the
functor $\MMM$ is bounded.
\end{corollary}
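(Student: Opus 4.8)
The plan is to deduce the statement directly from Lemma~\ref{bound}, the representability of the Quot functor (the Theorem above), and the very definition of a bounded family. The one place where genuine work is concealed is the upgrade from the \emph{per-sheaf} regularity of the preceding Lemma to a \emph{single} integer $m$ making every semistable sheaf with Hilbert polynomial $P$ simultaneously $m$-regular; this uniformity—already built into the statement of Lemma~\ref{bound}—rests on Le~Potier--Simpson type estimates that bound $h^{0}(X,F(\ell))$ for a semistable sheaf $F$ in terms of $P$, $\ell$ and $(X,\OOO_X(1))$ alone, proved by restricting $F$ to general hyperplane sections and inducting on $\dim X$. In the write-up I would either invoke this or sketch that induction; everything after it is bookkeeping.

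Granting such an $m$, set $\HHH=k^{\oplus P(m)}\otimes\OOO_X(-m)$ and let $F$ be an arbitrary semistable sheaf on $X$ with Hilbert polynomial $P$. Since $F$ is $m$-regular, $F(m)$ is globally generated and $H^{i}(X,F(m))=0$ for $i>0$, so $h^{0}(X,F(m))=P(m)$; a choice of basis of $H^{0}(X,F(m))$ yields a surjection $\HHH\twoheadrightarrow F$ whose kernel is coherent with the complementary Hilbert polynomial. Hence $[\HHH\twoheadrightarrow F]$ is a $k$-point of $\Quot_{X/\Spec k}(\HHH,P)$. By the Theorem on the Quot functor, $\Quot_{X/\Spec k}(\HHH,P)$ is a projective $\Spec k$-scheme, in particular of finite type over $k$, carrying a universal quotient sheaf $\UUU$ on $X\times\Quot_{X/\Spec k}(\HHH,P)$, and $F\simeq\UUU_{s}$ for the closed point $s$ corresponding to the chosen surjection. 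This is precisely the content of Lemma~\ref{bound}.

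Consequently the $k$-scheme $S:=\Quot_{X/\Spec k}(\HHH,P)$ is of finite type, and the coherent sheaf $\UUU$ on $X\times S$ has the property that every semistable sheaf on $X$ with Hilbert polynomial $P$ occurs among the fibres $\{\UUU_{s}\mid s\text{ a closed point of }S\}$. By the definition of a bounded family this says exactly that the family of semistable sheaves with fixed Hilbert polynomial $P$ is bounded, i.e. the functor $\MMM$ is bounded. The main—indeed the only—obstacle is the uniform regularity input; once it is available the corollary is immediate, so most of the effort in a full proof goes into that estimate (or into pinning down the precise reference), the Quot-scheme step being a one-line appeal to the Theorem above.
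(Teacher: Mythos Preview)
Your proposal is correct and follows exactly the paper's approach: the paper's entire proof is ``Follows from Lemma~\ref{bound},'' and you have simply unpacked that one line by spelling out how the universal quotient over the (finite-type) Quot scheme realizes every semistable sheaf as a fibre, which is precisely the definition of boundedness. Your additional remarks on the Le~Potier--Simpson estimates behind the uniform $m$ are accurate context but go beyond what the paper itself supplies.
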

\begin{proof}
Follows from Lemma \ref{bound}.
\end{proof}
\begin{definition}
The $S$-equivalence classes are the same as Jordan classes. They are defined for sheaves (or vector bundles) on $X/k$ as the classes of
semistable sheaves with graded objects which are stable and having the same reduced Hilbert polynomial with respect to the Harder-Narasimhan filtration. Let $F$ and $F'$ be
semistable sheaves with filtration $(F_i)$ and $(F'_i)$, then $F$ is $S$-equivalent to $F'$ if and only if \\
$(1)$ $\rk(F)=\rk(F')$\\
$(2)$ The quotients $F_i/F_{i-1}\simeq F'_i/F'_{i-1}$ up to an appropriate permutation.\\
\end{definition}
\begin{theorem}
The functor $\MMM^{ss}_{X}(P)$ has a coarse moduli scheme $M^{ss}_X (P)$ which is quasi-projective over $k$, and the points of $M^{ss}_{X}(P)$ 
represent the $\textrm{S}$-equivalence classes of semistable sheaves with Hilbert polynomial $P$. There exists an open subscheme $M^{s}_X(P)$ of $M^{ss}_X(P)$ which is quasi-projective and whose points represent the isomorphism classes of stable sheaves with Hilbert polynomial $P$.
\end{theorem}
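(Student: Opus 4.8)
The plan is to follow the classical GIT construction of the moduli space of semistable sheaves as presented in \cite{H-L}, Sect. 4, adapted to the setting at hand. Fix $m$ large enough that every semistable sheaf $F$ with Hilbert polynomial $P$ is $m$-regular; set $\HHH=k^{\oplus P(m)}\otimes\OOO_X(-m)$ and write $V=k^{\oplus P(m)}$. By Lemma \ref{bound} every such $F$ appears as a quotient $[\HHH\twoheadrightarrow F]$ in $\Quot:=\Quot_{X/k}(\HHH,P)$, the choice of surjection being the choice of an isomorphism $V\isoto H^0(F(m))$. Hence $\GL(V)$ acts on $\Quot$, and the orbits of the locally closed subscheme $Q^{ss}\subset\Quot$ of points $[\HHH\twoheadrightarrow F]$ with $F$ semistable and $V\isoto H^0(F(m))$ are exactly the isomorphism classes of semistable $F$. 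The first step is therefore to isolate $Q^{ss}$ (and the open $Q^s$ of stable points) and check it is a $\GL(V)$-invariant locally closed subscheme of finite type over $k$; this uses openness of semistability in flat families together with the boundedness already established.

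Next I would choose the linearization. For $l\gg m$ there is a $\GL(V)$-equivariant embedding of $\Quot$ into a Grassmannian $\Grass(V\otimes H^0(\OOO_X(l-m)),P(l))$ via $[\HHH\twoheadrightarrow F]\mapsto [H^0(\HHH(l))\twoheadrightarrow H^0(F(l))]$, and the Pl\"ucker embedding gives an ample $\GL(V)$-linearized line bundle $\LLL_l$ on $\Quot$. The key point — this is the heart of the argument and the main obstacle — is the Gieseker–Simpson comparison: for $l$ sufficiently large (depending only on $P$), a point $[\HHH\twoheadrightarrow F]\in Q^{ss}$ is GIT-semistable (resp. GIT-stable, after passing to $\SL(V)$) with respect to $\LLL_l$ if and only if $F$ is semistable (resp. stable) as a sheaf. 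This is proved by the Hilbert–Mumford numerical criterion: a one-parameter subgroup of $\GL(V)$ induces a weighted filtration of $V$, hence (via the $m$-regularity, which makes $V\cong H^0(F(m))$ faithfully reflect subsheaf structure) a filtration of $F$ by subsheaves, and one computes that the Mumford weight is, up to a positive constant and lower-order corrections absorbed by taking $l$ large, the difference of Hilbert polynomials measuring (semi)stability of $F$. One must also verify that $Q^{ss}$ is contained in the GIT-semistable locus of the closure $\overline{Q^{ss}}$, i.e. that no point outside our bounded family interferes; this again follows from boundedness and the regularity bound.

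Granting the comparison, I would then invoke Mumford's GIT: the good quotient $M^{ss}_X(P):=\overline{Q^{ss}}/\!/_{\LLL_l}\SL(V)$ exists and is quasi-projective over $k$, and the open subset of stable points has a geometric quotient $M^s_X(P)=Q^s/\SL(V)$ which is quasi-projective and open in $M^{ss}_X(P)$. The good quotient identifies points of $M^{ss}_X(P)$ with closed $\SL(V)$-orbits in $Q^{ss}$; a standard analysis of orbit closures (degenerating a semistable $F$ to $\gr F$ via the Jordan–H\"older filtration, using a suitable one-parameter subgroup) shows the closed orbit in $\overline{\GL(V)\cdot[\HHH\twoheadrightarrow F]}\cap Q^{ss}$ corresponds to the polystable sheaf $\gr F$, whence the points of $M^{ss}_X(P)$ are precisely the $S$-equivalence classes, and on $M^s_X(P)$ the isomorphism classes of stable sheaves. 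Finally, the corepresentability statement: one checks that for any family $\EEE\in\MMM^{ss}_X(P)(S)$, after twisting by a line bundle on $S$ one may locally lift to a map $S\to Q^{ss}$, and composing with the quotient map yields a morphism $S\to M^{ss}_X(P)$ well-defined independently of the choices; this produces the natural transformation $\MMM^{ss}_X(P)\to\underline{M^{ss}_X(P)}$ and, because the good quotient is universal (categorical in the category of schemes), shows $M^{ss}_X(P)$ universally corepresents $\MMM^{ss}_X(P)$, i.e. is a coarse moduli scheme in the sense defined above. The main obstacle, as indicated, is the Gieseker–Simpson numerical computation identifying GIT-stability with sheaf-stability for $l$ large; everything else is formal once boundedness and $m$-regularity are in hand, and we refer to \cite{H-L}, \cite{Sim}, \cite{Ma-1}, \cite{Ma-2} for the details.
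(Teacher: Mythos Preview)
Your sketch is correct and follows exactly the standard GIT construction that the paper invokes: the paper's own proof is simply the one-line citation ``This is Theorem 1.21 of \cite{Sim}'', and what you have written is a faithful outline of that construction (equivalently, of \cite{H-L}, Chapter 4). There is no divergence in approach---you have merely unpacked what the paper leaves to the references.
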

\begin{proof}
This is Theorem $1.21$ of \cite{Sim} 
\end{proof}

\begin{remark}
Stable vector bundles are a class of vector bundles with the property that families of stable bundles over $\Spec(K)\subset\Spec(R)$, where $R$ is
a discrete valuation ring with quotient field $K$, have at most one extension to families of stable bundles over $\Spec(R)$. Hence, by the valuative criterion, the moduli space is separated if it exists. In higher dimension, if we want to represent the functor by a projective moduli space, we have to consider not just (semi)stable vector bundles, but (semi)stable torsion-free sheaves.
\end{remark}

\begin{remark}
$(1)$ If a coarse moduli space exists for a given classification problem, then it is unique (up to an isomorphism).
$(2)$ A fine moduli space for a given classification problem is always a coarse moduli space for this problem, but, in general
not vice versa. In fact, there is no a priori reason why the map $$\Phi(S):\MMM^{s}(S)\rar\Hom(S, M^{s})$$
should be bijective for varieties $S$ other than a point.
\end{remark}
We refer to \cite{H-L} for general facts on the infinitesimal structure of the moduli space $M^{s}$.
Just let me recall that if $E$ is a stable vector bundle on $X$ with Hilbert polynomial $P$, represented by a point $[E]\in M^s$, then
the Zariski tangent space of $M^s$ at $[E]$ is given by $T_{[E]}M^{s}\simeq\Ext^{1}(E, E)$. If $\Ext^{2}(E, E)=0$,
then $M^s$ is smooth at $[E]$. In general, we have the following bounds:
$$\dim_k \Ext^{1}(E,E)\geq\dim_{[E]} M^s\geq\dim_k \Ext^{1}(E,E)-\dim_k\Ext^{2}(E, E).$$

We rely on Lemma \ref{bound} to describe the family of stable resp. semistable sheaves on $X$.
Thus, $F(m)$ is generated by global sections. If we set $V=k^{\oplus P(m)}$, $\HHH:=V\otimes\OOO_X(-m)$, then there exists a surjection $\rho:\HHH\rar F$ obtained by composing the canonical evaluation map $H^0 (F(m))\otimes\OOO_X(-m)\rar F$ with an isomorphism $V\rar H^0(F(m))$ . This defines a closed point $[\HHH\rar F]\in\Quot(\HHH, P)$, more precisely this point is contained in the open subscheme $R$ of all those coherent quotient sheaves $[\HHH\rar E]$, where $E$ is semistable and the induced map $H^{0}(\HHH(m))\simeq H^{0}(E(m))$ is an isomorphism.
The family of stable sheaves is parameterized by an open subscheme $R^{s}$ of $R$. 
The family of $\textrm{S}$-equivalence classes of semistable sheaves is parameterized by the quotient of $R$ by $\GL(V)$.\\
The next lemma relates the moduli problem to that of finding a quotient for the group action.
\begin{theorem}
If $R\rar M_X^{ss}(P)$ is a categorical quotient for the $GL(V)$-action, then $M_X^{ss}(P)$ corepresents the functor $\MMM_X^{ss}(P)$. Conversely,
if $M_X^{ss}(P)$ corepresents $\MMM_X^{ss}(P)$ then the morphism $R\rar M_X^{ss}(P)$, induced by the universal quotient module on $R\times X$, is
a categorical quotient . Similarly, $R^s\rar M^s_X(P)$ is a categorical quotient if and only if $M^s_X(P)$ corepresents $\MMM^s_X(P)$.
Therefore, we have $M_X^{ss}(P)=R//GL(V)$ and $M^s_X(P)=R^s//GL(V)$.
\end{theorem}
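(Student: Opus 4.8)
The plan is to prove the two biconditional statements; the concluding equalities then follow at once, since by the Theorem above (Theorem~$1.21$ of \cite{Sim}) the scheme $M^{ss}_X(P)$ corepresents $\MMM^{ss}_X(P)$ and $M^s_X(P)$ corepresents $\MMM^s_X(P)$, so the ``corepresents $\Rightarrow$ categorical quotient'' implications give $M^{ss}_X(P)=R//\GL(V)$ and $M^s_X(P)=R^s//\GL(V)$. The whole argument rests on two properties of the parameter scheme $R$ together with its tautological quotient $\FFF_R$ on $X\times R$ (Lemma~\ref{bound}), which I would set up first. \emph{(i) Local lifting:} for every $k$-scheme $S$ and every $S$-flat family $\FFF$ of semistable sheaves on $X$ with Hilbert polynomial $P$ there is an open cover $S=\bigcup_i S_i$ and morphisms $f_i\colon S_i\rar R$ with $(\id_X\times f_i)^*\FFF_R\simeq\FFF|_{X\times S_i}\otimes p^*L_i$ for suitable $L_i\in\Pic(S_i)$; this is exactly where boundedness is used, through $m$-regularity of semistable sheaves, which makes $p_*(\FFF(m))$ locally free of rank $P(m)$, so that a local frame identifies it with $V\otimes\OOO_{S_i}$ and the relative evaluation map gives a surjection onto $\FFF|_{S_i}$, i.e.\ a point of $\Quot$ lying in $R$. \emph{(ii) Comparison of lifts:} two lifts of $\sim$-equivalent families over the same base differ, after refining the cover, by the $\GL(V)$-action on $R$, and conversely translating a lift by an $S_i$-point of $\GL(V)$ changes the induced family only by a line-bundle twist; in particular, the pullbacks of $\FFF_R$ along the action morphism and along the projection $\GL(V)\times R\rar R$ define the same class in $\MMM^{ss}_X(P)(\GL(V)\times R)$, so every morphism of functors out of $\MMM^{ss}_X(P)$ sends the class $[\FFF_R]$ to a $\GL(V)$-invariant morphism out of $R$.

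\emph{Categorical quotient $\Rightarrow$ corepresents.} Let $\pi\colon R\rar M:=M^{ss}_X(P)$ be a categorical quotient. I would define $\Psi\colon\MMM^{ss}_X(P)\rar\underline{M}$ by gluing: for $\FFF/S$, choose a cover and lifts $f_i$ as in (i); the morphisms $\pi\circ f_i\colon S_i\rar M$ agree on overlaps, because $\pi$ is constant on $\GL(V)$-orbits and, by (ii), the lifts agree there up to the $\GL(V)$-action, so they glue to $\Psi_S(\FFF)\colon S\rar M$; independence of the choices, compatibility with $\sim$, and naturality in $S$ all reduce to (i)--(ii), and by construction $\Psi_R([\FFF_R])=\pi$. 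Now let $\alpha'\colon\MMM^{ss}_X(P)\rar\underline{M'}$ be arbitrary. By (ii) the morphism $\chi:=\alpha'_R([\FFF_R])\colon R\rar M'$ is $\GL(V)$-invariant, hence factors uniquely as $\chi=\beta\circ\pi$ through the categorical quotient. By naturality of $\alpha'$ and of $\Psi$ one has, locally on $S_i$, $\alpha'_S(\FFF)|_{S_i}=\chi\circ f_i=\beta\circ\pi\circ f_i=\beta\circ\Psi_S(\FFF)|_{S_i}$, so $\alpha'=\beta\circ\Psi$; and $\beta$ is the unique morphism with this property, since $\beta\circ\pi=\beta\circ\Psi_R([\FFF_R])=\chi$ determines $\beta$ by the universal property of the categorical quotient. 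Hence $(M,\Psi)$ corepresents $\MMM^{ss}_X(P)$.

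\emph{Corepresents $\Rightarrow$ categorical quotient.} Suppose $(M,\Psi)$ corepresents $\MMM^{ss}_X(P)$, and put $\pi:=\Psi_R([\FFF_R])\colon R\rar M$, which is $\GL(V)$-invariant by (ii). Given any $\GL(V)$-invariant morphism $\chi\colon R\rar N$, the same gluing recipe as above (using invariance of $\chi$ to match the $\chi\circ f_i$ on overlaps) produces a morphism of functors $\Theta\colon\MMM^{ss}_X(P)\rar\underline{N}$ with $\Theta_R([\FFF_R])=\chi$. Corepresentability gives a unique $\gamma\colon M\rar N$ with $\Theta=\gamma\circ\Psi$, and then $\gamma\circ\pi=\gamma\circ\Psi_R([\FFF_R])=\Theta_R([\FFF_R])=\chi$. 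If also $\gamma'\circ\pi=\chi$, then for each $\FFF/S$ one gets $\gamma'\circ\Psi_S(\FFF)|_{S_i}=\gamma'\circ\pi\circ f_i=\chi\circ f_i=\Theta_S(\FFF)|_{S_i}$, so $\gamma'\circ\Psi=\Theta$ and hence $\gamma'=\gamma$ by the uniqueness clause in corepresentability; thus $\pi$ is a categorical quotient. The stable case is proved verbatim, with $\MMM^{ss}_X(P),R,M^{ss}_X(P)$ replaced by $\MMM^s_X(P),R^s,M^s_X(P)$, using that $R^s$ is a $\GL(V)$-stable open subscheme of $R$, that $M^s_X(P)$ is open in $M^{ss}_X(P)$, and that $\FFF_R$ restricts to the tautological family over $R^s$. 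Combining this with Theorem~$1.21$ of \cite{Sim} as noted above yields $M^{ss}_X(P)=R//\GL(V)$ and $M^s_X(P)=R^s//\GL(V)$.

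\emph{Main obstacle.} The two implications are essentially formal: one glues morphisms into a scheme along an open cover and invokes the defining universal properties of a categorical quotient and of a corepresenting object. The genuine work is in (i)--(ii), namely that $\underline{R}$ surjects onto $\MMM^{ss}_X(P)$ locally over the base, with fibres exactly the $\GL(V)$-orbits up to line-bundle twist. The surjectivity rests on boundedness (Lemma~\ref{bound}) and $m$-regularity, which make $p_*(\FFF(m))$ locally free of rank $P(m)$ so that a local frame yields a $\Quot$-point; the statement about the fibres uses the explicit description of a point of $R$, namely a quotient $V\otimes\OOO_X(-m)\rar E$ with $E$ semistable inducing an isomorphism $V\simeq H^0(E(m))$, to see that two lifts of a single family over the same base can differ only by an element of $\GL(V)$.
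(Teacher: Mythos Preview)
Your argument is correct and is essentially the standard proof from Huybrechts--Lehn, Lemma~4.3.1. The paper does not give its own proof of this statement: its entire proof is the citation ``See Lemma $4.3.1$ of \cite{H-L}.'' So there is nothing to compare at the level of strategy; you have simply unpacked the reference. Your identification of properties (i) and (ii) as the substantive input (local lifting via $m$-regularity and the fact that lifts over the same base differ by the $\GL(V)$-action) is exactly right, and the gluing-plus-universal-property arguments for the two implications are the formal steps that follow from them.

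One minor remark: in your property (i) you allow a twist by $p^*L_i$, but in fact the local lift you construct via a frame of $p_*(\FFF(m))$ pulls back $\FFF_R$ to $\FFF|_{X\times S_i}$ on the nose, with no twist needed; the $\sim$-equivalence only enters when comparing two different representatives of a class in $\MMM^{ss}_X(P)(S)$ or when invoking the center of $\GL(V)$. This does not affect the validity of your argument, but it slightly obscures where the line-bundle ambiguity actually originates.
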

\begin{proof}
See Lemma $4.3.1$ of \cite{H-L}.
\end{proof}

\section{Moduli of connections}\label{Moduli of connections}

Basics for constructing moduli spaces of connections were developed by Simpson \cite{Sim} and Nitsure \cite{Ni}.
Nitsure constructed the coarse moduli space of integrable logarithmic connections with poles on a normal crossing divisor $D$ in
a smooth projective manifold. Simpson provided a more general approach covering not only regular integrable connections and
Nitsure's case of integrable logarithmic connections, but also Higgs bundles, Hitchin pairs, integrable connections along a foliation, Deligne's $\tau$-
connections and so on. Simpson handled all these objects on equal basis as $\Lambda$-modules for an appropriate sheaf of rings of differential operators $\Lambda$, which is just the standard sheaf of differential operators $\DDD_X$ for regular integrable connections, and
its associated graded $\gr\fatdot\DDD_X=\oplus_{m=0}^{\infty}S_{m}\TTT_{X}$ in the case of Higgs bundles, where $S_m\TTT_X$ denotes the $m$-th symmetric power of the tangent bundle
on~$X$.

\subsection{Sheaf $\boldsymbol\Lambda$ of rings of differential operators}

We will recall Simpson's definition of $\Lambda$. Let $X$ be a scheme of finite type over $k$, an algebraically closed field of characteristic $0$.
Then a sheaf of rings of differential operators on $X$ is a sheaf $\Lambda$ of associative rings with unity together with a filtration by subsheaves of
abelian groups $\Lambda_0\subset\Lambda_1\subset\Lambda_2\subset\dots$ satisfying the following properties:\\
$(1)$ $\Lambda_0=\OOO_X$, $\Lambda_i\cdot\Lambda_j\subset\Lambda_{i+j}$, and $\Lambda=\cup_{i=0}^{\infty}\Lambda_i$.
In particular, $\Lambda$ and each $\Lambda_i$ are $\OOO_X$-bimodules.\\
$(2)$ The image of the constant sheaf $\CC_X$ in $\OOO_X=\Lambda_0$ is contained in the center of $\Lambda$.\\
$(3)$ The left and the right structures of $\OOO_X$-modules on the $i$-th graded piece $\gr_i(\Lambda)=\Lambda_i/\Lambda_{i-1}$
are equal.\\
$(4)$ The sheaves $\gr_i(\Lambda)$ are $\OOO_X$-coherent.\\
$(5)$ The $\OOO_X$-algebra $\gr\fatdot(\Lambda)$ is generated by its component $\gr_1(\Lambda)$ of degree $1$.\\
$(6)$ There is a left $\OOO_X$-linear map $\iota:\gr_1(\Lambda)\rar\Lambda_1$ providing a splitting of the triple 
$0\rar\Lambda_0\rar\Lambda_1\rar\gr_1 (\Lambda)\rar 0$.\\
$(7)$ $\gr\fatdot(\Lambda)$ is the symmetric algebra $S\fatdot(\gr_1(\Lambda))$ over $\gr_1(\Lambda)$.

As we have mentioned above, $\Lambda=\DDD_X$ is an example. If $X$ is smooth, $\EEE$ a vector bundle and $\nabla$ an integrable connection over $X$, then we can consider $\EEE$ as a left $\Lambda$-module, setting $v\cdot s=\nabla_v (s)$ for any local vector field $v$ and any local section s of $\EEE$.
To include the case of non-integrable connections, we will replace the axiom $(7)$ by the following one\\
$(7')$ $\gr.(\Lambda)$ is the tensor algebra $T\fatdot(\gr_1(\Lambda))=\oplus_{i=0}^{\infty} T_i(\gr_1(\Lambda))$ over $\gr_1(\Lambda)$, where 
$T_i(V)=V^{\otimes i}$ for any module $V$.

Simpson's construction of moduli spaces of semistable $\Lambda$-modules works in all details whether $\Lambda$ satisfies the axioms $(1)-(7)$ or
$(1)-(6), (7')$. We will specialize it to the case of meromorphic connections as in Chapter $2$, and will briefly describe the steps of the construction of the moduli space. In  the sequel, $\Lambda$ satisfies the axioms $(7)$ or $(7')$  depending on whether we are working with integrable or arbitrary connections.

Let $X$ be a smooth variety over $k$ and $\boldsymbol{\DDD}_X$ the sheaf of non commutative rings of differential operators on $X$. It can be defined as follows. As a left $\OOO_X$-module, it is just the tensor algebra $T\fatdot({\TTT}_X)$ over the tangent bundle. To determine the multiplicative structure on it, it suffices to define the products $v\cdot f$, where $v\in\TTT_{X,p}$, $f\in\OOO_{X,p}$, $p\in X$. We set $$v\cdot f=fv+v(f),$$ where $v(f)$ denotes the derivative of $f$ in the direction of $v$. This rule allows us to transform the product of two elements of $T\fatdot(\TTT_X)$
$$f v_1\otimes\dots\otimes v_r\cdot g w_1\otimes\dots\otimes w_s$$ into an element of $T.(\TTT_X)$ in a finite number of steps.

Now, let us fix an effective divisor $D$ on $X$. We define $\Lambda$ as the subsheaf of rings in $\boldsymbol{\DDD}_X$ generated by $\OOO_X$ and
$\TTT_X ( -D)$, the latter sheaf being considered as the subsheaf of $\TTT_X\subset\boldsymbol{\DDD}_{X1}$ consisting of vector fields vanishing on $D$.
Then for a rank $r$ connection $(\EEE,\nabla)$ with divisor of poles $D$, we endow $\EEE$ with a structure of a left $\Lambda$-module by setting
$$v\cdot s=\nabla_vs=v\lrcorner\nabla s$$ for any $v\in\TTT_{X,p}, s\in\EEE_p, p\in X$. Conversely, we can completely recover $\nabla$ from a structure of a $\Lambda$-module in applying the above formula to the vector field $v$ ranging over some basis of $\TTT_{X,p}$ as an $\OOO_{X,p}$-module.
In the sequel of this section, we will think of connections $(\EEE,\nabla)$ with divisor of poles $D$ as left $\Lambda$-modules that are
locally free of rank $r$ when considered as $\OOO_X$-modules. For intermediate steps of this construction, we also need to consider coherent $\OOO_X$-modules with a structure of a left $\Lambda$-module.

From now on, $\Lambda$ is any sheaf of rings of differential operators satisfying either axioms $(1)-(7)$ or $(1)-(6), (7')$.
The moduli space of semistable $\Lambda$-modules that we are going to construct is interpreted as the moduli space of a class of connections
in the following cases.\\
$(a)$ $\Lambda\subset\boldsymbol{\DDD}_X$, generated by $\OOO_X$ and $\TTT_X(-D)$ and satisfying axiom $(7')$, correspond to meromorphic connections with fixed divisor of poles $D$.\\
$(b)$ $\Lambda\subset\DDD_X$, generated by $\OOO_X$ and $\TTT_X(-D)$ and satisfying axiom $(7)$, correspond to integrable meromorphic connections with fixed divisor of poles $D$.\\
$(c)$  $\Lambda\subset\boldsymbol{\DDD}_X$, generated by $\OOO_X$ and $\TTT_X<D>=\TTT_X (\log D)$ and satisfying axiom $(7')$, correspond to logarithmic connections with a simple normal crossing divisor of poles $D$.\\
$(d)$  $\Lambda\subset\DDD_X$, generated by $\OOO_X$ and $\TTT_X<D>=\TTT_X (\log D)$ and satisfying axiom $(7)$, correspond to integrable logarithmic connections with a simple normal crossing divisor of poles $D$.

The sheaf $\TTT_X <D>$ is dual to $\Omega^1_X (\log D)$ and can be defined as the subsheaf of $\TTT_X$ preserving the ideal subsheaf $\III_D$ of $D$ in $X$.

\subsection{Moduli space of semistable $\boldsymbol\Lambda$-modules}

To speak about quasi-projective moduli spaces, we have to start with defining the notions
of stability and semistability.

\begin{definition}
Let $X$ be a smooth projective variety with a very ample sheaf $\OOO_X(1)$. A coherent $\OOO_X$-module $\EEE$ of rank $r>0$ endowed with
a structure of a left $\Lambda$-module is called a semistable $\Lambda$-module of rank $r$, if it is torsion free and for any
$\Lambda$-submodule $\FFF\neq0$ of $\EEE$,
$$\frac{P(\FFF,n)}{\rk(\FFF)}\leq\frac{P(\EEE,n)}{\rk(\EEE)}\ \forall n>>0,$$ where $P(\FFF,n)=\chi(\FFF,n)$ denotes the Hilbert polynomial
of $\FFF$. If the inequality is strict for all $\FFF$, then $\EEE$ is called a stable $\Lambda$-module. 
\end{definition}

The following lemmas are crucials for the boundedness of the family of all semistable $\Lambda$-modules of rank $r$ with fixed Hilbert polynomial $P$
on $X$.
\begin{lemma}
Let $\EEE$ be a semistable $\Lambda$-module on a smooth projective variety $X$ with a very ample sheaf $\OOO_X(1)$. Let $\GGG$ be the saturation
of the image of the morphism $\Lambda_r\otimes\FFF\rar\EEE$ for any $\OOO_X$-submodule $\FFF$, then $\GGG$ is a $\Lambda$-submodule of $\EEE$.
\end{lemma}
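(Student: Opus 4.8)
The plan is to reduce the assertion to the single inclusion $\Lambda_1\cdot\GGG\subseteq\GGG$ and then to control the behaviour of ranks along the filtration of $\Lambda$. First I would record two preliminary facts. \emph{(i)} Since $\gr_\bullet(\Lambda)$ is generated in degree $1$ by axiom $(5)$ and $\Lambda_0=\OOO_X\subset\Lambda_1$, a routine argument on graded pieces gives $\Lambda_{k+1}=\Lambda_1\cdot\Lambda_k$ for every $k$; hence for any $\OOO_X$-submodule $\HHH\subseteq\EEE$, writing $\HHH^{(k)}:=\im(\Lambda_k\otimes\HHH\to\EEE)$ for the $k$-th stage of the action, one has $\HHH^{(k+1)}=\Lambda_1\cdot\HHH^{(k)}$, and an $\OOO_X$-submodule $\GGG\subseteq\EEE$ is automatically a $\Lambda$-submodule once $\Lambda_1\cdot\GGG\subseteq\GGG$. \emph{(ii)} Writing $\overline{\HHH}$ for the saturation of $\HHH$ in $\EEE$ (which makes sense because $\EEE$, being a semistable $\Lambda$-module, is torsion free), one has $\Lambda_1\cdot\overline{\HHH}\subseteq\overline{\Lambda_1\cdot\HHH}$: indeed $\Lambda_1$ is $\OOO_X$-coherent, being an extension of the coherent sheaf $\gr_1(\Lambda)$ by $\OOO_X$, so $\Lambda_1\otimes_{\OOO_X}(\overline{\HHH}/\HHH)$ is coherent and supported in dimension $<\dim X$; therefore its image $\Lambda_1\overline{\HHH}/\Lambda_1\HHH$ inside $\EEE/\Lambda_1\HHH$ is a torsion sheaf, which is exactly what the claim says.

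The heart of the matter is a rank-stabilization estimate for the ascending chain $\FFF=\FFF^{(0)}\subseteq\FFF^{(1)}\subseteq\FFF^{(2)}\subseteq\cdots\subseteq\EEE$. The integers $\rk\FFF^{(k)}$ are non-decreasing and bounded above by $r=\rk\EEE$. I claim that if $\rk\FFF^{(k)}=\rk\FFF^{(k+1)}$ then also $\rk\FFF^{(k+1)}=\rk\FFF^{(k+2)}$: equality of ranks together with $\FFF^{(k)}\subseteq\FFF^{(k+1)}$ forces $\FFF^{(k+1)}\subseteq\overline{\FFF^{(k)}}$, and then fact \emph{(ii)} yields $\FFF^{(k+2)}=\Lambda_1\FFF^{(k+1)}\subseteq\Lambda_1\overline{\FFF^{(k)}}\subseteq\overline{\Lambda_1\FFF^{(k)}}=\overline{\FFF^{(k+1)}}$, so the rank cannot grow at the next step. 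Consequently the sequence of ranks strictly increases until it first stabilizes and is constant thereafter; since it can strictly increase at most $r-1$ times (it already has value at least $1$ at $k=0$, the case $\FFF=0$ being trivial), it is constant from index $r-1$ onwards, and in particular $\rk\FFF^{(r)}=\rk\FFF^{(r+k)}$ for all $k\ge0$. This is precisely the role of the exponent $r$ in $\Lambda_r$.

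To conclude, put $\GGG_0:=\FFF^{(r)}=\im(\Lambda_r\otimes\FFF\to\EEE)$ and $\GGG:=\overline{\GGG_0}$. From $\FFF^{(r)}\subseteq\FFF^{(r+1)}$ and $\rk\FFF^{(r)}=\rk\FFF^{(r+1)}$ we get $\FFF^{(r+1)}\subseteq\overline{\FFF^{(r)}}=\GGG$, hence $\overline{\FFF^{(r+1)}}=\GGG$ as well. Applying fact \emph{(ii)} once more, $\Lambda_1\GGG=\Lambda_1\overline{\FFF^{(r)}}\subseteq\overline{\Lambda_1\FFF^{(r)}}=\overline{\FFF^{(r+1)}}=\GGG$, and by fact \emph{(i)} this means $\GGG$ is a $\Lambda$-submodule of $\EEE$, as desired. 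As a by-product one sees that $\GGG$ is nothing but the saturation of the $\Lambda$-submodule of $\EEE$ generated by $\FFF$.

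I expect the only real obstacle to be making the rank-stabilization step watertight — specifically the two inputs ``inclusion with equal rank implies containment in the saturation'' and the commutation $\Lambda_1\overline{\HHH}\subseteq\overline{\Lambda_1\HHH}$, which is exactly where the purity of $\EEE$ and the $\OOO_X$-coherence of $\Lambda_1$ enter. Everything else is bookkeeping with the filtration of $\Lambda$ and associativity of the module action.
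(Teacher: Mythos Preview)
Your argument is correct and is essentially the proof of Simpson's Lemma~3.2 written out in full, which is precisely what the paper invokes; the two key ingredients you isolate --- the identity $\Lambda_{k+1}=\Lambda_1\Lambda_k$ from axiom~(5) and the torsion estimate $\Lambda_1\overline{\HHH}\subseteq\overline{\Lambda_1\HHH}$ coming from coherence of $\Lambda_1$ and purity of $\EEE$ --- are exactly Simpson's, and your rank-stabilization bookkeeping matches his. Note incidentally that semistability is used only through torsion-freeness, as you observed.
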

\begin{proof}
See the proof of Lemma $3.2$ of \cite{Sim}, which works perfectly for our definition of $\Lambda$.
\end{proof}
\begin{lemma}\label{boundi}
Let $m$ be an integer such that $gr_1(\Lambda)\otimes\OOO_X(m)$ is generated by global sections. Then for any semistable $\Lambda$-module $\EEE$
of rank $r$, and any $\OOO_X$-submodule $\FFF\neq 0$, we have $\mu(\FFF)\leq\mu(\EEE)+mr$, where $\mu(\FFF)$ denotes the slope of $\FFF$,
$\mu(\FFF)=\frac{\deg_{\OOO_X(1)}c_1(\FFF)}{\rk(\FFF)}$.
\end{lemma}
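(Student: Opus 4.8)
The plan is to run the standard Kleiman-type slope estimate, using the previous lemma to convert an arbitrary $\OOO_X$-submodule $\FFF$ into a $\Lambda$-submodule and then applying the semistability hypothesis on $\EEE$. First I would reduce to the case where $\FFF$ is saturated in $\EEE$: replacing $\FFF$ by its saturation only increases the degree and keeps the rank, so the inequality $\mu(\FFF)\leq\mu(\EEE)+mr$ for the saturation implies it for $\FFF$. So assume $\FFF\subset\EEE$ is a saturated $\OOO_X$-submodule of rank $r'$ with $0<r'\leq r$.

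Next, let $\GGG$ be the $\Lambda$-submodule of $\EEE$ obtained as the saturation of the image of $\Lambda_r\otimes\FFF\rar\EEE$, which is indeed a $\Lambda$-submodule by the preceding lemma. The key observation is that $\GGG$ is built from $\FFF$ by at most $r$ successive applications of $\gr_1(\Lambda)$: concretely, the image of $\Lambda_1\otimes\FFF$ sits in an extension involving $\FFF$ and a quotient of $\gr_1(\Lambda)\otimes(\EEE/\FFF\text{-part})$, and iterating, $\GGG$ is obtained from $\FFF$ by finitely many steps each of which tensors the "new" piece by $\gr_1(\Lambda)$ before mapping to $\EEE$. Since $\gr_1(\Lambda)\otimes\OOO_X(m)$ is globally generated, each such step raises $c_1$ (hence the slope of the relevant piece) by at most $m$ times the rank involved; summing the contributions over the at most $r$ steps and over ranks bounded by $r$ gives the crude but sufficient bound $\mu(\GGG)\geq\mu(\FFF)-mr$, equivalently $\mu(\FFF)\leq\mu(\GGG)+mr$.

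Finally, apply semistability of $\EEE$ to the $\Lambda$-submodule $\GGG$: comparing reduced Hilbert polynomials in top and next-to-top degree gives $\mu(\GGG)\leq\mu(\EEE)$. Combining with the previous inequality yields $\mu(\FFF)\leq\mu(\EEE)+mr$, as claimed. The main obstacle is the bookkeeping in the middle step: one must check that passing from $\FFF$ to $\GGG$ really costs at most $mr$ in slope, i.e. that the filtration of $\GGG$ by images of $\Lambda_0\otimes\FFF\subset\Lambda_1\otimes\FFF\subset\cdots\subset\Lambda_r\otimes\FFF$ has successive quotients that are quotients of $\gr_1(\Lambda)$-twists of torsion-free sheaves of rank $\leq r$, so that global generation of $\gr_1(\Lambda)\otimes\OOO_X(m)$ bounds the jump in $c_1$ at each stage; this is exactly where axioms $(5)$ and $(6)$ on $\Lambda$ (so that $\gr\fatdot\Lambda$ is generated in degree one and $\Lambda_1\to\gr_1\Lambda$ splits) are used, and it is the content of Simpson's argument in Lemma $3.3$ of \cite{Sim}, which goes through verbatim for our $\Lambda$ since only properties $(1)$--$(6)$ enter and $(7)$ versus $(7')$ is irrelevant here.
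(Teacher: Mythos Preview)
Your proposal is correct and follows exactly the approach the paper indicates: the paper's proof is simply a reference to Simpson's Lemma~3.3 in \cite{Sim}, noting that the argument ``works perfectly for our definition of $\Lambda$,'' and what you have written is a faithful sketch of that argument together with the observation that only axioms $(1)$--$(6)$ are used so that the replacement of $(7)$ by $(7')$ is immaterial. You have in fact supplied more detail than the paper itself.
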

\begin{proof}
See the proof of Lemma $3.3$ of \cite{Sim}, which works perfectly for our definition of $\Lambda$.
\end{proof}
\begin{corollary}
The set of semistable $\Lambda$-modules on $X$ with given Hilbert polynomial $P$ is bounded.
\end{corollary}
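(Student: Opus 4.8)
The plan is to deduce boundedness from the slope estimate of Lemma \ref{boundi} together with the classical boundedness criterion for torsion-free sheaves with bounded slope (Kleiman's theorem, or the Grothendieck/Maruyama version used in \cite{H-L}). First I would fix the Hilbert polynomial $P$, hence the rank $r=\rk(\EEE)$, the degree, and the slope $\mu_0=\mu(\EEE)$, which are the same for every $\Lambda$-module in our family since they are read off from $P$. The goal is to produce a single $k$-scheme $S$ of finite type and a coherent sheaf on $X\times S$ whose closed fibers include every semistable $\Lambda$-module with Hilbert polynomial $P$; by Lemma \ref{bound} and the theorem on the Quot scheme, it suffices to bound the underlying $\OOO_X$-modules, i.e. to find an integer $m$ such that every such $\EEE$ is $m$-regular, and then take the universal quotient over the relevant locus of $\Quot_{X/k}(k^{\oplus P(m)}\otimes\OOO_X(-m),P)$.

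The key step is to turn Lemma \ref{boundi} into a uniform bound on the subsheaf slopes that does not depend on $\EEE$. Choose once and for all an integer $m_0$ such that $\gr_1(\Lambda)\otimes\OOO_X(m_0)$ is globally generated; such $m_0$ exists because $\gr_1(\Lambda)$ is $\OOO_X$-coherent by axiom $(4)$. Then Lemma \ref{boundi} gives, for \emph{every} semistable $\Lambda$-module $\EEE$ of rank $r$ with Hilbert polynomial $P$ and every nonzero $\OOO_X$-submodule $\FFF\subset\EEE$, the inequality $\mu(\FFF)\le \mu_0+m_0r$, with the right-hand side a constant depending only on $P$, $X$, $\OOO_X(1)$ and $\Lambda$. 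In particular the maximal destabilizing slope $\mu_{\max}(\EEE)$ is bounded above by this constant, uniformly over the family. Since $\EEE$ is torsion free of fixed rank and fixed $\mu$ with $\mu_{\max}$ uniformly bounded, the family of underlying $\OOO_X$-modules satisfies the hypotheses of the boundedness theorem for sheaves with bounded rank, determinant/degree and bounded maximal subsheaf slope (see \cite{H-L}, Theorem 3.3.7, or Maruyama \cite{Ma-1}, \cite{Ma-2}), hence is bounded.

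From boundedness of the underlying sheaves one extracts, via Serre vanishing applied uniformly to a bounding family, a single integer $m$ for which $H^i(X,\EEE(m-i))=0$ for all $i>0$ and all $\EEE$ in the family, i.e. every such $\EEE$ is $m$-regular; this is the $\Lambda$-module analogue of the lemma stated earlier for semistable sheaves. Then $\EEE(m)$ is globally generated with $h^0(\EEE(m))=P(m)$, so $\EEE$ appears as a quotient $k^{\oplus P(m)}\otimes\OOO_X(-m)\twoheadrightarrow\EEE$ and defines a point of $\Quot_{X/k}(k^{\oplus P(m)}\otimes\OOO_X(-m),P)$; restricting the universal quotient to the (locally closed) subscheme parametrizing those quotients that underlie a semistable $\Lambda$-module and for which $H^0$ is an isomorphism gives the required finite-type $S$ and universal family. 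I expect the only real obstacle to be the bookkeeping in the first reduction — checking that fixing $P$ genuinely fixes rank, degree and slope for $\Lambda$-modules exactly as for ordinary sheaves, and invoking the sheaf-theoretic boundedness theorem in the precise form that only needs an upper bound on subsheaf slopes — since the $\Lambda$-structure itself plays no further role once Lemma \ref{boundi} has been applied.
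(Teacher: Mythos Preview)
Your proposal is correct and is essentially the same argument the paper invokes: the paper's proof is just a reference to Simpson's Corollary~3.4, whose content is precisely the reduction you spell out---use Lemma~\ref{boundi} to bound $\mu_{\max}$ of the underlying $\OOO_X$-module uniformly in terms of $P$ and $\Lambda$, then apply the Maruyama/Kleiman boundedness criterion for torsion-free sheaves with fixed Hilbert polynomial and bounded maximal slope. Your steps (3)--(4) go a bit beyond what is needed for the bare statement of boundedness, but they are harmless and anticipate the construction of $Q$ in Theorem~\ref{boundness}.
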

\begin{proof}
We remark that semistable $\Rightarrow\mu$-semistable and refer to the proof of Corollary $3.4$ of \cite{Sim}.
\end{proof}
The following assertion realizes the boundedness property for semistable $\Lambda$-modules with given Hilbert polynomial $P$:
it provides a scheme parametrizing all of them. 
\begin{theorem}\label{boundness}
For fixed $P$, there exists $N_0\in\NN$ depending on $\Lambda$ and $P$ such that for any $N\geq N_0$ and any $S$-flat semistable $\Lambda$-module
$\EEE$ with Hilbert polynomial $P$ on $X$ such that $\forall s\in S$, we have $H^i(X,\EEE_s(N))=0$ if $i>0$, $\dim H^0(X,\EEE_s(N))=P(N)$ and $\EEE_s(N)$ is generated by global sections.

Pick up any $N\geq N_0$. Then the functor which associates to each $k$-scheme $S$ the set of isomorphism classes of pairs $(\EEE,\alpha)$,
where $\EEE$ is a semistable $\Lambda$-module with Hilbert polynomial $P$ on $X_S=X\times S$ and $\alpha$ is an isomorphism
$\OOO_S^{P(N)}\rar H^0(X_S/S,\EEE(N))$, is represented by a quasi-projective scheme $Q$ over $k$.
\end{theorem}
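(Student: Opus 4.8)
The plan is to follow Simpson's construction (\cite{Sim}, \S3) step by step, the only genuine issue being whether the two places in it that a priori use the commutativity of $\gr\fatdot(\Lambda)$ — the slope estimates behind boundedness and the description of $\Lambda$-module structures — survive the replacement of axiom $(7)$ by $(7')$. For the first assertion I would argue as follows. By the Corollary preceding the statement, the family of semistable $\Lambda$-modules on $X$ with Hilbert polynomial $P$ is bounded, and a bounded family of coherent sheaves on $(X,\OOO_X(1))$ has uniformly bounded Castelnuovo--Mumford regularity; take $N_0\in\NN$ larger than that common bound. Then for any $k$-scheme $S$ and any $S$-flat semistable $\Lambda$-module $\EEE$ with Hilbert polynomial $P$, every fibre $\EEE_s$ is $N$-regular for $N\ge N_0$, which gives at once $H^i(X,\EEE_s(N))=0$ for $i>0$, $\dim_k H^0(X,\EEE_s(N))=\chi(\EEE_s(N))=P(N)$ and global generation of $\EEE_s(N)$; cohomology and base change along $\pi\colon X\times S\to S$ then yields $R^i\pi_*(\EEE(N))=0$ for $i>0$, that $\pi_*(\EEE(N))$ is locally free of rank $P(N)$ with formation commuting with base change, and that $\pi^*\pi_*(\EEE(N))\twoheadrightarrow\EEE(N)$.

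For the parameter scheme, fix $N\ge N_0$, set $V=k^{\oplus P(N)}$ and $\HHH=V\otimes_k\OOO_X(-N)$, and start from $R=\Quot_{X/k}(\HHH,P)$, which is projective over $k$ by the Quot theorem above applied with $S=\Spec k$, carrying a universal quotient $q\colon\HHH_R\twoheadrightarrow\widetilde\UUU$ on $X\times R$. I would then form, over $R$, the linear $R$-scheme $\mathbf A$ representing homomorphisms $\gr_1(\Lambda)_R\otimes\widetilde\UUU\to\widetilde\UUU$ — this exists by Grothendieck's representability of relative $\underline{\Hom}$ of flat coherent sheaves along the projective morphism $X\times R\to R$ — and, using the splitting $\iota$ of axiom $(6)$ and the fact that $\gr\fatdot(\Lambda)$ is generated in degree one (axiom $(5)$), cut out inside $\mathbf A$ the closed subscheme $\widetilde Q$ of those homomorphisms that extend to a genuine left $\Lambda$-module structure on $\widetilde\UUU$ compatible with its $\OOO_X$-structure; the defining equations express compatibility with the $\OOO_X$-bimodule structure on $\Lambda_1$ and with the multiplication $\Lambda_1\cdot\Lambda_1\to\Lambda_2$. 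Here axiom $(7')$ affects only which relations are imposed in degree two — none, since the tensor algebra is free — so this is a cosmetic modification of Simpson's argument, and $\widetilde Q$ is affine over $R$, hence quasi-projective over $k$.

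Next I would carve out $Q\subset\widetilde Q$ as the locus of points $t$ with $(i)$ $\widetilde\UUU_t$ locally free, $(ii)$ the map $V\to H^0(X,\widetilde\UUU_t(N))$ induced by $q$ an isomorphism, and $(iii)$ $\widetilde\UUU_t$ a semistable $\Lambda$-module. Condition $(i)$ is open; condition $(ii)$ is open by upper semicontinuity of $t\mapsto h^0(\widetilde\UUU_t(N))$ together with $\chi(\widetilde\UUU_t(N))=P(N)$ and cohomology and base change; condition $(iii)$ is open by openness of semistability in flat families of $\Lambda$-modules (again \cite{Sim}, \S3, resting on the boundedness lemmas above and properness of relative $\Quot$). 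Hence $Q$ is open in $\widetilde Q$, so quasi-projective over $k$, and the restricted universal data give a semistable $\Lambda$-module $\UUU$ on $X\times Q$ with Hilbert polynomial $P$ together with the tautological isomorphism $\alpha^{\mathrm{univ}}\colon\OOO_Q^{\oplus P(N)}\xrightarrow{\ \sim\ }\pi_*(\UUU(N))$ coming from $V$.

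Finally I would check that $(Q,\UUU,\alpha^{\mathrm{univ}})$ represents the functor. Given a pair $(\EEE,\alpha)$ over $S$, composing the evaluation surjection $\pi^*\pi_*(\EEE(N))\otimes\OOO_{X\times S}(-N)\twoheadrightarrow\EEE$ from the first step with $\alpha$ produces a surjection $\HHH_S\twoheadrightarrow\EEE$ of fibrewise Hilbert polynomial $P$, i.e.\ a morphism $f\colon S\to R$; the $\Lambda$-module structure on $\EEE$ lifts it to $\widetilde Q$, and conditions $(i)$--$(iii)$ hold over $S$ because $\EEE$ is fibrewise a locally free semistable $\Lambda$-module and $\alpha$ restricts on each fibre to the isomorphism $V\xrightarrow{\ \sim\ }H^0(X,\EEE_s(N))$, so $f$ factors through $Q$; conversely $(f\times\id_X)^*(\UUU,\alpha^{\mathrm{univ}})\cong(\EEE,\alpha)$, and $f$ is unique because $\alpha$ and the canonical evaluation map already determine the quotient $\HHH_S\twoheadrightarrow\EEE$, while its lift to $Q$ is determined by the $\Lambda$-structure. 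I expect the main obstacle to be exactly the verification in the second paragraph — that left $\Lambda$-module structures form a closed subscheme of $\mathbf A$ under axioms $(1)$--$(6),(7')$, i.e.\ that none of Simpson's encoding of the relations secretly used commutativity of $\gr\fatdot(\Lambda)$ — together with checking that Lemma \ref{boundi} and the slope bounds it rests on, already stated above for our $\Lambda$, really do suffice; the remaining steps are Simpson's argument essentially verbatim.
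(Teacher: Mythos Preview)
Your proposal is correct and follows exactly the approach of the paper, whose proof consists of a reference to Corollary~3.6 and Theorem~3.8 of \cite{Sim}; the paragraph following the theorem sketches the same three-layer construction (Quot scheme, then $\Lambda$-module structures as an affine family over it, then the open semistable locus) that you spell out in detail. One small correction: drop your condition $(i)$ imposing local freeness of $\widetilde\UUU_t$, since the theorem as stated concerns all semistable $\Lambda$-modules (torsion-free by definition, but not necessarily locally free); the locally free case is singled out only afterwards as the open subvariety $M^0_X(\Lambda,P)$.
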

\begin{proof}
See Corollary $3.6$ and Theorem $3.8$ of \cite{Sim}.
\end{proof}
The scheme $Q$ is constucted in Theorem \ref{boundness} in several steps. First, take the Grothendieck $\Quot$ scheme 
$\tilde{Q}=\Quot^P(\OOO_X^{P(N)}(-N))$ parameterizing the quotients $\OOO_X^{P(N)}(-N)\rar\EEE\rar0$ with Hilbert polynomial $P$.
Over $\tilde{Q}$, one considers the family $\tilde{\tilde{Q}}$ of morphisms $\Lambda_1\otimes_{\OOO_X}\EEE\rar\EEE$ defining on the quotients
$\EEE$ structures of $\Lambda$-modules. This is a family with affine fibers (e.g. affines spaces in the case $(a)$ of non-integrable connections).
And finally, $Q$ is the open subscheme of $\tilde{\tilde{Q}}$ parameterizing the $\Lambda$-modules which are semistable.

Let now $\MMM(\Lambda,P)$ denote the functor on schemes over $k$ which associates to a $k$-scheme $S$ the set of isomorphism classes of
semistable $\Lambda$-modules with Hilbert polynomial $P$. We are now ready to construct the moduli space for this functor as a 
GIT quotient.
\begin{theorem}\label{modss}
Under the hypotheses and in the notation of Theorem \ref{boundness}, $Q$ is invariant under $G=SL(P(N))$ and carries a
G-linearized very ample line bundle $\LLL$ such that all the points of $Q$ are $\LLL$-semistable.

Let $M(\Lambda,P)=Q//G$ be the GIT quotient.
Then $M(\Lambda,P)$ universally corepresents $\MMM(\Lambda,P)$.
The following properties hold:\\
$(1)$ $M(\Lambda,P)$ is  a quasi-projective variety.\\
$(2)$ The geometric points of $M(\Lambda,P)$ represent $S$-equivalence classes of semistables $\Lambda$-modules with Hilbert polynomial $P$.\\
$(3)$ The closed orbits are in $1$-to-$1$ correspondence with the semisimple objects.\\
$(4)$ The geometric points of the open set $M^s(\Lambda,P)=Q^s//G$, where $Q^s$ parameterizes stable $\Lambda$-modules,
are in $1$-to-$1$ correspondence with the isomorphism classes of stable $\Lambda$-modules with Hilbert polynomial $P$.  
\end{theorem}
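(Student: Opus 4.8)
The plan is to follow Simpson's GIT argument \cite{Sim} step by step, checking at each stage that our weaker axiom $(7')$ does not interfere. First I would verify the $G$-invariance of $Q$: the scheme $Q$ was built inside $\tilde{\tilde Q}$, which fibers over the Quot scheme $\tilde Q=\Quot^P(\OOO_X^{P(N)}(-N))$, and both the $\GL(P(N))$-action on $\tilde Q$ by change of the trivialization $\alpha$ and the induced action on the family of $\Lambda$-module structures $\Lambda_1\otimes\EEE\to\EEE$ are algebraic; the open conditions (semistability, $H^{>0}$-vanishing, global generation of $\EEE(N)$) cut out $Q$ and are manifestly $G$-stable, so the scalar matrices act trivially and one may pass to $G=SL(P(N))$. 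Next I would produce the linearization: embed $Q$ via the Grothendieck embedding of $\tilde Q$ into a Grassmannian $\mathrm{Gr}\bigl(H^0(\OOO_X^{P(N)}(N'-N)),P(N')\bigr)$ for $N'\gg N$, pull back $\OOO(1)$ from the Plücker embedding, and twist by a suitable power so that the resulting $\LLL$ carries the natural $G$-linearization. This is exactly the line bundle used in Le Potier--Simpson; nothing here sees axiom $(7)$ versus $(7')$.

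The heart of the matter is the claim that \emph{every} point of $Q$ is $\LLL$-semistable. I would invoke the Le Potier--Simpson numerical criterion: a point $[\EEE,\alpha]$ fails to be semistable for a one-parameter subgroup $\lambda$ of $G$ precisely when the associated filtration $0\subset V_1\subset\cdots\subset V_\ell\subset V=k^{P(N)}$ induces, via $\alpha$, a filtration of $\EEE$ by subsheaves $\EEE^{(i)}$ whose Hilbert polynomials violate an inequality weighted by the $\lambda$-weights. The point is that, for $N\ge N_0$ as in Theorem \ref{boundness}, each $\EEE^{(i)}$ is generated by $H^0(\EEE^{(i)}(N))$ with the expected number of sections, so the Hilbert-polynomial inequality is equivalent to the slope inequality $p(\EEE^{(i)})\le p(\EEE)$ defining semistability of $\EEE$ as a \emph{coherent sheaf}. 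But $Q$ parameterizes $\Lambda$-modules that are semistable in the sense of the Definition preceding Lemma \ref{boundi}, i.e. the inequality $p(\FFF)\le p(\EEE)$ holds for $\Lambda$-submodules $\FFF$; to bridge the gap one applies the preceding Lemma (saturation of $\Lambda_r\otimes\FFF$ is a $\Lambda$-submodule) together with Lemma \ref{boundi} to control arbitrary $\OOO_X$-subsheaves by $\Lambda$-subsheaves, exactly as in Simpson's Lemma $3.2$–$3.3$. This forces the Mumford weight $\mu(\lambda,[\EEE,\alpha])\ge 0$ for all $\lambda$, i.e. $\LLL$-semistability of every point of $Q$. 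I expect this comparison between the GIT weight and the $\Lambda$-module stability condition to be the main obstacle, since it is where the large-$N$ regularity estimates and the boundedness Corollary are all used simultaneously; but the cited lemmas were already checked to hold verbatim for axiom $(7')$, so the argument goes through unchanged.

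Given that all points are semistable, $M(\Lambda,P):=Q/\!/G$ is projective over the (quasi-projective) image of $Q$, hence quasi-projective, giving $(1)$. For the corepresentability statement I would reproduce Simpson's descent argument: a family of semistable $\Lambda$-modules over $S$ with Hilbert polynomial $P$ is, after twisting by $N$ and choosing a local trivialization of $p_{S*}\EEE(N)$, classified by a map to $Q$; two choices of trivialization differ by a map to $G$, so the induced map $S\to M(\Lambda,P)$ is well defined, and universality of the categorical quotient in the GIT sense (Seshadri, Mumford) yields that $M(\Lambda,P)$ universally corepresents $\MMM(\Lambda,P)$. Properties $(2)$ and $(3)$ follow from the standard description of closed orbits in a GIT quotient: the closed $G$-orbit in the closure of $[\EEE,\alpha]$ corresponds to the associated graded $\gr\EEE$ of a Jordan--H\"older filtration of $\EEE$ \emph{in the category of $\Lambda$-modules}, which is polystable, whence closed orbits $\leftrightarrow$ polystable (semisimple) objects and arbitrary orbits-up-to-closure $\leftrightarrow$ $S$-equivalence classes. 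Finally $(4)$ follows because on the stable locus the stabilizer of $[\EEE,\alpha]$ in $G$ is trivial (a stable $\Lambda$-module is simple, so $\Aut(\EEE)$ consists of scalars, which lie in the center already quotiented out), so $G$ acts freely with closed orbits on $Q^s$, the quotient $Q^s/\!/G=Q^s/G$ is geometric, and its points biject with isomorphism classes of stable $\Lambda$-modules. I would cite \cite{Sim}, Corollary $3.6$ through Theorem $3.8$, for the details that are identical to the commutative-graded case.
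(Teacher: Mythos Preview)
Your sketch is correct and follows exactly the route the paper intends: the paper's own proof is the single line ``This is Theorem~4.7 of \cite{Sim}'', so you have simply unpacked that citation and verified, as the paper asserts throughout Section~\ref{Moduli of connections}, that axiom~$(7')$ in place of~$(7)$ does not disturb any step of Simpson's GIT construction. One small correction: the precise reference in \cite{Sim} for this statement is Theorem~4.7 (together with the supporting material in \S4 there), not Corollary~3.6 through Theorem~3.8, which the paper already cited for the boundedness result (Theorem~\ref{boundness}).
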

\begin{proof}
This is Theorem $4.7$ of \cite{Sim}.
\end{proof}
Note that the notion of $S$-equivalence and semisimple objects are defined exactly as in the case of moduli of sheaves.
Namely, any semistable $\Lambda$-module $\EEE$ has a Harder-Narasimhan filtration
$$\EEE_0=0\subset\EEE_1\subset\dots\subset\EEE_t=\EEE$$ with the property that all the factors $gr_i\EEE=\EEE_i/\EEE_{i-1}$ are
stable $\Lambda$-modules with the same reduced Hilbert polynomial, equal to the reduced Hilbert polynomial
$\frac{P(\EEE,n)}{\rk{\EEE}}$ of $\EEE$.\\
Two semistable $\Lambda$-modules are called $S$-equivalent if the associated graded objects of their Harder-Narasimhan filtrations
are isomorphic. Further, $\EEE$ is semisimple if $\EEE\simeq gr(\EEE)$.

Finally, if we assert the additional restriction that our $\Lambda$-modules are locally free as $\OOO_X$-modules, we will obtain the
open subvarieties $M^0_X(\Lambda,P)\subset M(\Lambda,P)$ and $M^{0,s}(\Lambda,P)\subset M^s(\Lambda,P)$, moduli spaces of vector bundles with a 
structure of a $\Lambda$-module

\section{Luna slice theorem}
In \cite{Machu-2}, we have constructed the formal versal deformations, called also formal Kuranishi spaces, for $4$ types
of connections: all the connections with fixed divisor of poles $D$, integrable ones,
integrable logarithmic connections and integrable logarithmic ones with a parabolic structure 
over $D$. It is quite easy to see that our formal Kuranishi spaces lift to germs of complex analytic spaces, that is,
our formal series have nonzero radius of convergence. The Luna slice theorem allows us to go further and produce an affine
scheme with a marked point whose germ at the marked point is the base of a versal deformation. The Luna slice theorem is
stated in the general framework of a reductive algebraic group acting on a $k$-scheme $X$ of finite type.
\begin{definition}\label{def}
Let $G$ be an affine algebraic group over $k$ acting on a $k$-scheme $X$. A morphism $\phi:X\rar Y$ is a good quotient, if\\
$(1)$ $\phi$ is affine, surjective and open.\\
$(2)$ The natural homomorphism $\OOO_Y\rar(\phi_{*}\OOO_X)^G$ is an isomorphism.\\
$(3)$ If $W$ is an invariant closed subset of $X$, then its image $\phi(W)$ is also a closed subset of $Y$.
If $W_1$ and $W_2$ are disjoint invariant closed subsets of $X$, then $\phi(W_1)\cap\phi(W_2)=\emptyset$.
\end{definition}

\begin{definition}
In the situation of Definition \ref{def}, $\phi$ is a geometric quotient, if it is a good quotient, and the geometric fibers of $\phi$ are the orbits of geometric points of $X$. We will denote a good quotient of $X$, if it exists, by $X//G$.\end{definition}

In the constructions of moduli spaces, described in the previous sections, the quotient of the semistable locus of the
$\Quot$ scheme by $SL(P(N))$ is a good quotient, and the quotient of the stable one is a geometric quotient.
\begin{definition}
Let $G$ be an algebraic group , $H\subset G$ an algebraic subgroup, $V$ a $k$-scheme of finite type
with an action of $H$. Make $H$ act on the product $G\times V$ according to the rule
$$h:(g,v)\mapsto(gh^{-1},hv).$$
Then there exists a geometric quotient $G\times V//H$ such that the natural map $g:G\times V\rar G\times V//H$ is
a $H$-principal bundle. We  denote $G\times V//H$ by $G\times^{H} V$. It has a natural (left) action of $G$, and
we say that $G\times^{H} V$ is obtained from $V$ by extending the action from $H$ to $G$.
\end{definition}
\begin{definition}
Let $G$ be an affine algebraic group acting on a $k$-scheme $X$ of finite type, $x_0\in X$, $\OOO(x_0)=G\cdot x_0$ the orbit of $x_0$.
A normal slice to $\OOO(x_0)$ at $x_0$ is an affine scheme $S\subset X$ with the following properties:\\
$(1)$ $x_0\in S$ and $S$ is invariant under the action of $G_{x_0}=\Stab_G(x_0)$, the stabilizer of $x_0$ in $G$.\\
$(2)$ The natural morphism $\phi:G\times^{G_{x_0}} S\rar X$ has an open image and is \'etale over its image.
\end{definition}
\begin{Luna Slice Theorem}[\cite{Ln}]
Let $X$ be a $k$-scheme of finite type, $G$ a reductive algebraic group acting on $X$, and let $\pi:X\rar X//G$ be a good quotient.
Let $x\in X$ be a point such that $\OOO(x)$ is closed. Then there exists a normal slice $S$ to $\OOO (x)$ at $x$ and the stabilizer $G_x$ is a reductive algebraic group so that there exists a good quotient  $S//G_x$. Moreover, the induced morphism of good quotients $S//G_x\rar X//G$ has an affine open image and is \'etale over the image. Furthermore the following diagram is commutative.
\begin{displaymath}
\xymatrix{S\times^{G_x} G\ar[r]\ar[d] & X\ar[d]\\ S//G_x\ar[r] & X//G}
\end{displaymath}
If $X$ is normal (resp. smooth) at $x$, then $S$ can be also taken normal (resp. smooth).
\end{Luna Slice Theorem} 
\begin{proof}
See \cite{Ln}.
\end{proof}

The fact that the Luna normal slices are versal deformations is known for moduli of sheaves and was used by several authors for 
computing the local structure of the moduli space of sheaves at a strictly semistable point
\cite{O'G}, \cite{Dr}, \cite{L-S}, \cite{M-T}.

Now, we will prove that a similar property holds for moduli of connections.
\begin{theorem}
Let $\Lambda$ be as in one of the cases $(a)-(d)$ of Sect. \ref{Moduli of connections}, and set the hypotheses and the notation as in Theorems \ref{boundness},\ref{modss} of Sect. \ref{Moduli of connections}. Let $\EEE$ be a polystable $\Lambda$-module with Hilbert polynomial $P$, 
$z=[\EEE]$ the corresponding point of $Q$. Assume that $\EEE$ is locally free as an $\OOO_X$-module, then the orbit $\OOO(z)=G\cdot z$ is
closed, so that there is a normal slice $V$ at $z$. Let $\boldsymbol{\EEE}$ be the restriction to $X\times V$ of the tautological quotient $\Lambda$-module
over $Q$. Then the couple $(V,\boldsymbol{\EEE})$ is a versal deformation of $\EEE$.  
\end{theorem}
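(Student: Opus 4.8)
The plan is to verify the two conditions in the definition of a versal deformation: that the family $(V,\boldsymbol{\EEE})$ deforms $\EEE$ over a pointed base, and that it is versal, i.e. every other deformation of $\EEE$ is locally induced by a (not necessarily unique) map to $V$. The geometric idea is that the $\Quot$-type scheme $Q$ carries a \emph{complete} family of semistable $\Lambda$-modules in the sense that every semistable $\Lambda$-module with Hilbert polynomial $P$ appears as a fiber, together with a choice of basis of its global sections after twisting by $N$; the slice $V$ at $z=[\EEE]$ then captures, \'etale-locally, all the deformations of $\EEE$ once one quotients away the directions that merely change the trivialization. So the proof is really a translation of the Luna Slice Theorem together with the local completeness of $Q$ into deformation-theoretic language.

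First I would set up deformation theory: let $A$ be an Artinian local $k$-algebra with residue field $k$, and let $\EEE_A$ be an $A$-flat $\Lambda$-module on $X\times\Spec A$ restricting to $\EEE$ over the closed point. By Theorem~\ref{boundness}, after possibly enlarging $N$ (which is harmless since the hypotheses there hold for all $N\ge N_0$), the sheaf $\EEE_A(N)$ is $A$-flat with $H^i(X_A,\EEE_A(N))=0$ for $i>0$ and $H^0$ locally free of rank $P(N)$; by Nakayama one can choose an isomorphism $\alpha_A:\OOO_{\Spec A}^{P(N)}\isoto H^0(X_A/A,\EEE_A(N))$ lifting a fixed $\alpha$ at the closed point. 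The pair $(\EEE_A,\alpha_A)$ is then classified by a morphism $\Spec A\rar Q$ sending the closed point to $z$. Different choices of $\alpha_A$ differ by the action of $\GL(P(N))(A)$, and since $\EEE$ is polystable the relevant stabilizer issues are controlled: the orbit $\OOO(z)$ is closed (this is where polystability and local freeness of $\EEE$ are used, exactly as in Theorem~\ref{modss}), so the Luna Slice Theorem applies at $z$.

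Next I would invoke the slice. The étale morphism $\phi:G\times^{G_z}V\rar Q$ with open image, combined with the fact that $z\in V$, lets me lift the classifying map $\Spec A\rar Q$ (which lands in the image of $\phi$, being infinitesimally close to $z$) to a map $\Spec A\rar G\times^{G_z}V$; projecting, after a further modification of $\alpha_A$ by an element of $G(A)$ to kill the $G$-direction, gives a map $f_A:\Spec A\rar V$ with $f_A(\text{closed point})=z$ such that $f_A^{*}\boldsymbol{\EEE}\cong\EEE_A$ as deformations of $\EEE$. This is precisely versality. The base point condition and flatness of $\boldsymbol{\EEE}$ over $V$ are immediate from the construction of $Q$ and the flatness of the universal quotient.

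The main obstacle is the passage from the $G$-equivariant étale-local statement of the Luna Slice Theorem to an honest lift of an \emph{infinitesimal} family: one must check that the lifting of the classifying morphism through the étale map $\phi$ exists, which uses that $\Spec A$ has a single point mapping into the open image of $\phi$ together with the infinitesimal lifting property of étale morphisms, and that the residual $G_z$-ambiguity in $V$ is exactly the automorphism ambiguity $\Aut(\EEE)$ of the central fiber — so it does not obstruct versality (it only obstructs \emph{uni}versality, which is not claimed). A secondary technical point, which I would treat carefully but not belabor, is the uniform choice of $N$ working simultaneously for $\EEE$ and for all its Artinian deformations; this follows from the openness of the conditions in Theorem~\ref{boundness} and boundedness, so no genuinely new input is needed beyond what is already in Sect.~\ref{Moduli of connections}.
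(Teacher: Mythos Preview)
Your argument is correct but takes a different route from the paper. You establish versality functorially: lift an Artinian deformation to $Q$ using the universal property (after choosing a compatible framing), pass through the \'etale morphism $G\times^{G_z}V\to Q$ by the infinitesimal lifting property, and land in $V$ by absorbing the $G$-component into a change of framing. The paper instead separates the two defining conditions of versality. It dismisses completeness---essentially your whole argument---in one line as a consequence of the universal property of the Quot scheme, and devotes the proof to the Kodaira--Spencer isomorphism $\kappa:T_zV\to\Ext^1_\Lambda(\EEE,\EEE)$. For this it writes two parallel short exact sequences, one expressing $\Ext^1_\Lambda(\EEE,\EEE)=\HH^1(\CCC^{\fatdot})$ as an extension of $\Ext^1_{\OOO_X}(\EEE,\EEE)$ by the deformations of $\nabla$ with $\EEE$ fixed, the other expressing $T_zQ$ as an extension of $\Hom_{\OOO_X}(\KKK,\EEE)$ by that same space, and identifies the induced map $\bar\kappa$ on the right-hand quotients with the connecting homomorphism in the long exact sequence obtained by applying $\Hom(\,\cdot\,,\EEE)$ to the tautological triple $0\to\KKK\to\OOO_X(-N)^{P(N)}\to\EEE\to0$; surjectivity of $\bar\kappa$ (hence of $\kappa$) then follows as in \cite{O'G}. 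Your approach is softer and bypasses this cohomological bookkeeping; the paper's is more explicit and ties the tangent space directly to the hypercohomology of $\CCC^{\fatdot}$, which is exactly the description exploited in the examples of Sect.~4. One small omission in your write-up: you prove completeness but do not spell out why $\kappa$ is bijective on tangent spaces; surjectivity is your lifting argument specialized to $A=k[\epsilon]/(\epsilon^2)$, and injectivity follows from the transversality $T_zV\cap T_z\OOO(z)=0$ built into the Luna slice, but it would be worth saying so.
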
 
\begin{proof}
The versality of $(V,\boldsymbol{\EEE})$ means that the following two properties are verified:\\
$(1)$ Any (flat) deformation $\FFF$ of $\EEE$ over any $k$-scheme $S$ is induced from $(V,\boldsymbol{\EEE})$ via some morphism $S\rar V$.\\
$(2)$ The Kodaira-Spencer map 
$$\kappa:T_z V\rar\Ext^1_{\Lambda}(\EEE,\EEE)$$ is an isomorphism.

The property $(1)$ follows easily from the universal property of the Quot scheme.
We will prove $(2)$ for the case $(a)$; the other cases are treated similarly.
First note that a structure of a $\Lambda$-module is the same as a connection $\nabla$ with divisor
of poles $D$, so $\Ext^1_{\Lambda}(\EEE,\EEE)=\HH^1(\CCC^{\fatdot})$, where 
\begin{equation*} \CCC^{\fatdot}=[\mathcal{E}nd_{\OOO_X}(\EEE)\xymatrix@1{\ar[r]^{\nabla}&}\mathcal{E}nd_{\OOO_X}(\EEE)\otimes_{\OOO_X}\Omega^1_X (D)],\end{equation*}
is the complex introduced in Theorem $2.9$. Thus, we have the following exact triple:
\begin{equation}
0\rar H^0(\mathcal{E}nd_{\OOO_X}(\EEE)\otimes_{\OOO_X}\Omega^1_X (D))/\nabla(H^0(\mathcal{E}nd_{\OOO_X}(\EEE))\rar\Ext^1_{\Lambda}(\EEE,\EEE)\rar\Ext^1_{\OOO_X}(\EEE,\EEE)
\rar 0.
\end{equation}
Now, a tangent vector from $T_z Q$ is an infinitesimal deformation of the quotient map $\OOO_X(-N)^{P(N)}\rar\EEE\rar 0,$
or equivalently, of the exact triple
\begin{equation}\label{inf}
0\rar\KKK\rar\OOO_X(-N)^{P(N)}\rar\EEE\rar 0,\end{equation}
followed by an infinitesimal deformation of the connection $\nabla$ on $\EEE$.
The Quot-scheme parametrizing the triple \eqref{inf} was denoted $\tilde{Q}$ (Sect. $2$, before Theorem \ref{modss}), and
$$T_z\tilde{Q}=\Hom_{\OOO_X}(\KKK,\EEE).$$
Next, the deformations of $\nabla$ over the algebra of dual numbers $k[\epsilon]/\epsilon^2$ that fix $\EEE$ are
obviously parametrized by $H^0(\mathcal{E}nd_{\OOO_X}(\EEE)\otimes\Omega^1_X (D))$; two such deformations are isomorphic
if they differ by an element of $\nabla(H^0(\mathcal{E}nd_{\OOO_X}(\EEE)))$, so we obtain the exact triple
\begin{equation}\label{na}
0\rar H^0(\mathcal{E}nd_{\OOO_X}(\EEE)\otimes_{\OOO_X}\Omega^1_X (D))/\nabla(H^0(\mathcal{E}nd_{\OOO_X}(\EEE))\rar T_z Q\rar\Hom_{\OOO_X}(\KKK,\EEE)\rar 0.
\end{equation}
The Kodaira-Spencer map $\kappa$ induces a morphism of triples \eqref{inf}, \eqref{na}, which is the identity on the left hand side of
the triples. The right hand side counterpart $\bar{\kappa}$ of $\kappa$ can be identified, as in the proof of
Prop. $1.2.3$ of \cite{O'G}, with the homomorphism in the long exact sequence
\begin{equation*} 0\ra\Hom(\EEE,\EEE)\rar\Hom(\OOO_X(-N)^{P(N)},\EEE)\xymatrix@1{\ar[r]^{\alpha}&}\Hom(\KKK,\EEE)\xymatrix@1{\ar[r]^{\bar{\kappa}}&}\Ext^1(\EEE,\EEE)\rar\dots,\end{equation*}
obtained by applying $\Hom(\cdot,\EEE)$ to \eqref{inf}.
In loc. cit., it is proved that $\bar{\kappa}$ is surjective.
Hence, $\kappa$ is also surjective.
\end{proof}
\begin{corollary}\label{coro Luna}
Let $(\EEE,\nabla)$ be a connection arising in one of the $4$ cases $(a)-(d)$ of Section $2$ with locally free sheaf $\EEE$, and 
let $M=M_X (\Lambda,P)$ be the corresponding moduli space. We denote by $(K,0)$ the formal Kuranishi space of $(\EEE,\nabla)$ constructed
in Chapter $2$. Let $z\in M$ be the point representing $(\EEE,\nabla)$ in $M$, and $H=\Aut(\EEE,\nabla)$. Then
$$(K,0)//H=(M,z).$$
\end{corollary}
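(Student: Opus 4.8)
The plan is to combine the versality result just proved (the theorem immediately preceding the corollary) with the Luna Slice Theorem and the formal version of the Kuranishi construction from \cite{Machu-2}. First I would recall the setup: $z=[\EEE]\in Q$ with $\OOO(z)=G\cdot z$ closed (here $G=SL(P(N))$), and by the Luna Slice Theorem there is a normal slice $V$ at $z$, invariant under the reductive stabilizer $G_z$, such that the induced morphism of good quotients $V/\!/G_z\rar Q/\!/G=M$ has affine open image and is \'etale over its image, with $z$ mapped to the marked point. The key point to identify is the stabilizer: since $\EEE$ is a (poly)stable $\Lambda$-module, one has $G_z=\Stab_{SL(P(N))}(\EEE)\cong\Aut_\Lambda(\EEE,\nabla)=H$ (up to the usual scalar normalization built into the choice of $G=SL$ rather than $GL$, which is exactly how $S$-equivalence classes correspond to closed orbits as in Theorem \ref{modss}(3)). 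So the Luna diagram gives an \'etale map $(V/\!/H,\,\bar z)\rar(M,z)$ of germs, hence an isomorphism of completed local rings $\widehat{\OOO}_{V/\!/H,\bar z}\cong\widehat{\OOO}_{M,z}$.

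Next I would pass to formal completions. Let $\KKK$ denote the formal completion of $V$ at $z$; since $H$ is reductive, completion commutes with taking $H$-invariants, so $(V/\!/H)^{\wedge}_{\bar z}=\KKK/\!/H$ (i.e. $\Spec$ of $(\widehat{\OOO}_{V,z})^H$). Combining with the previous paragraph, $(\KKK/\!/H,0)\cong(M,z)$ as formal germs. It then remains to identify the formal germ $(\KKK,0)$ with the formal Kuranishi space $(K,0)$ of $(\EEE,\nabla)$ constructed in \cite{Machu-2}. Here I would invoke the universal property of a (formal) versal deformation: by the theorem just proved, $(V,\boldsymbol{\EEE})$ is a versal deformation of $\EEE$ as a $\Lambda$-module, with Kodaira--Spencer map $\kappa\colon T_zV\xrightarrow{\sim}\Ext^1_\Lambda(\EEE,\EEE)$ an isomorphism; the formal Kuranishi space $K$ is by construction the (mini)versal formal deformation with the same property $T_0K\cong\Ext^1_\Lambda(\EEE,\EEE)$. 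Two formal deformations that are both versal and have isomorphic Kodaira--Spencer maps are (non-canonically) isomorphic, so $(\KKK,0)\cong(K,0)$, compatibly with the $H$-actions since both actions are the natural ones induced by $H=\Aut(\EEE,\nabla)$ acting on deformations. Chaining the isomorphisms yields $(K,0)/\!/H=(M,z)$.

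The main obstacle I expect is the $H$-equivariant matching of $(\KKK,0)$ with $(K,0)$: versality alone gives an isomorphism of formal germs, but one must check it can be chosen compatibly with the $H$-action, or else argue directly that $\KKK/\!/H$ and $K/\!/H$ agree. The cleanest route is to observe that both $\KKK$ and $K$ can be pinned down as the formal miniversal deformation together with its natural $\Aut(\EEE,\nabla)$-action (the action on $K$ is part of the data constructed in Chapter~2, and the action on $\KKK$ comes from the $G_z$-action on the slice $V$ via $G_z=H$); miniversal formal deformations with group action are unique up to equivariant isomorphism when the group is linearly reductive, because one can average the trivializing automorphism over $H$. A secondary technical point to be careful about is that the Luna theorem produces a \emph{versal}, not necessarily miniversal, slice — i.e.\ $\dim V$ may exceed $\dim\Ext^1_\Lambda$; but this is harmless for the germ of the quotient, since the extra directions form an $H$-stable smooth factor on which $H$ acts trivially (they correspond to the tangent directions of the orbit $G/G_z$), and one may either cut down $V$ to a miniversal slice or simply note that $/\!/H$ absorbs the trivial factor. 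Modulo these bookkeeping points, everything else is formal consequence of the two theorems already in hand.
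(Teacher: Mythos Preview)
The paper gives no explicit proof of this corollary; it is stated immediately after the theorem on versality of the Luna slice and treated as a direct consequence. Your argument is exactly the intended one: Luna's \'etale map $V/\!/G_z\to M$ gives an isomorphism of completed local rings, $G_z=\Aut_\Lambda(\EEE,\nabla)=H$, completion commutes with $H$-invariants by reductivity, and the formal completion of $V$ is identified with the formal Kuranishi space by uniqueness of (mini)versal deformations. So your approach is correct and coincides with what the paper leaves implicit.

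One small correction: your ``secondary technical point'' about $\dim V$ possibly exceeding $\dim\Ext^1_\Lambda$ is not an issue here, because the preceding theorem explicitly asserts that the Kodaira--Spencer map $\kappa:T_zV\to\Ext^1_\Lambda(\EEE,\EEE)$ is an \emph{isomorphism}, not merely a surjection. Hence the slice $V$ is already miniversal, and there are no extra directions to worry about. (Your description of hypothetical extra directions as ``tangent directions of the orbit $G/G_z$'' is in any case off: a normal slice is by definition transverse to the orbit, so orbit directions never appear in $T_zV$.) Your main obstacle, the $H$-equivariance of the identification $\widehat V_z\cong K$, is the genuine point to check, and your proposed resolution via averaging over the linearly reductive group $H$ is the standard and correct one.
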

We will use this corollary in the next section to describe the local structure of moduli spaces of connections
in some examples.
\section{Examples}
Let $X$ be a curve, $D$ an effective divisor on $X$, $r\in\NN$ and $d\in\ZZ$. We will use the following notation:\\
$\CCC_X(r,d;D)$, the moduli space of semistable pairs $(\EEE,\nabla)$, where $\EEE$ is a vector bundle on $X$ of rank $r$ and degree $d$,
and $\nabla$ is a meromorphic connection on $\EEE$ with fixed divisor of poles $D$.\\
$M_X(r,d)$, the moduli space of semistable vector bundles of rank $r$ and degree $d$ on $X$.\\
$M_X^s(r,d)$, the locus of stable vector bundles in $M_X(r,d)$.\\
$\CCC_X^s(r,d;D)$, the locus of stable pairs $(\EEE,\nabla)$ in $\CCC_X(r,d;D)$.\\
$\CCC_X^0(r,d;D)$, the locus of pairs $(\EEE,\nabla)\in\CCC_X(r,d;D)$ with stable $\EEE$.\\
$\tilde{\CCC}_X(r,d;D)$, the locus of pairs $(\EEE,\nabla)\in\CCC_X(r,d;D)$ with semistable $\EEE$.\\
$\tilde{\CCC}_X^s(r,d;D)$, the locus of pairs $(\EEE,\nabla)\in\CCC_X^s(r,d;D)$ with semistable $\EEE$.

In the sequel, we will determine some of the relations between these moduli loci and produce examples of 
computing their local structure, based on the Luna slice theorem and Corollary \ref{coro Luna}.

\subsection{The case when the underlying vector bundle is stable}
Let $(\EEE,\nabla)\in\CCC^0(r,d;D)$. Then $\EEE$ is automatically stable.
The map of forgetting the second component of a pair is a well-defined morphism
$\pi:\CCC^0(r,d;D)\rar M^s(r,d).$ Moreover, given two connections $\nabla,\nabla'$ with
divisor of poles $D$ on the same vector bundle $\EEE$, we have $\nabla-\nabla'\in H^0(X,\mathcal{E}nd(\EEE)\otimes\Omega_X^1(D)).$
Thus the fiber of $\pi$ over a point $[\EEE]\in M^s(r,d)$, if nonempty, is the affine space $H^0(X,\mathcal{E}nd(\EEE)\otimes\Omega_X^1(D))$.
To determine the image $\im\pi\subset M^s(r,d)$, recall that $\EEE$ admits a connection with fixed divisor of poles $D$ if and
only if the Atiyah class $\At^D(\EEE)$ vanishes in $H^1(X,\mathcal{E}nd(\EEE)\otimes\Omega_X^1(D))$. By Serre duality,
$$H^1(X,\mathcal{E}nd(\EEE)\otimes\Omega_X^1(D))^*=H^0(X,\mathcal{E}nd(\EEE)\otimes\OOO_X(-D)),$$ and by stability,
$h^0(X,\mathcal{E}nd(\EEE))=1$, the only global endomorphisms of $\EEE$ being the homotheties. Thus $H^1(X,\mathcal{E}nd(\EEE)\otimes\Omega_X^1(D))=0$
whenever $D>0$, and $h^1(X,\mathcal{E}nd(\EEE)\otimes\Omega_X^1)=1$. $\EEE$ may fail to be in the image of $\pi$ only when $D=0$. In this case,
we refer to a theorem of Atiyah:
\begin{theorem}[\cite {A0}]
A holomorphic vector bundle $\EEE$ on a curve $X$ admits a holomorphic connection
if and only if $\EEE$ is semistable of degree $0$.
\end{theorem}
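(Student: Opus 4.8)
The statement to prove is the classical theorem of Atiyah: a holomorphic vector bundle $\EEE$ on a compact Riemann surface $X$ admits a holomorphic connection if and only if $\EEE$ is semistable of degree $0$. I would organize the argument around the Atiyah class $\At(\EEE)=\At^0(\EEE)\in H^1(X,\mathcal{E}nd(\EEE)\otimes\Omega_X^1)$, whose vanishing is, by the discussion immediately preceding the statement, equivalent to the existence of a holomorphic connection on $\EEE$. The key numerical fact is that the image of $\At(\EEE)$ under the trace map $H^1(X,\mathcal{E}nd(\EEE)\otimes\Omega_X^1)\rar H^1(X,\Omega_X^1)\simeq\CC$ equals $\deg(\EEE)$ (up to the standard normalizing constant $2\pi i$), so that $\At(\EEE)=0$ forces $\deg(\EEE)=0$. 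This disposes of half of the ``only if'' direction at once.

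\textbf{The ``only if'' direction.} Suppose $\EEE$ carries a holomorphic connection $\nabla$. The degree-$0$ claim follows from the trace computation above. For semistability, let $\FFF\subset\EEE$ be any subbundle (we may assume it saturated, hence a subbundle). The connection $\nabla$ need not preserve $\FFF$, but the composite $\FFF\into\EEE\tto{\nabla}\EEE\otimes\Omega_X^1\twoheadrightarrow(\EEE/\FFF)\otimes\Omega_X^1$ is $\OOO_X$-linear — this is the second fundamental form — and the obstruction to $\nabla$ restricting to a connection on $\FFF$ is exactly this class in $H^0(X,\Hom(\FFF,\EEE/\FFF)\otimes\Omega_X^1)$. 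Its trace-type invariant computes $\deg(\FFF)$: more precisely, applying the same Atiyah-class / curvature argument to the subbundle shows that if $\deg(\FFF)>0$ then $\FFF$ cannot be $\nabla$-stable, and one propagates this down a maximal destabilizing flag. The cleanest route is: a holomorphic connection on $\EEE$ induces one on every subquotient that \emph{is} preserved, and by taking the Harder--Narasimhan filtration one reduces to the case $\EEE$ stable, where $h^0(\mathcal{E}nd(\EEE))=1$ makes the second fundamental form argument force $\deg(\FFF)\le 0$ for all $\FFF$; combined with $\deg(\EEE)=0$ this is semistability. (Alternatively one invokes the Weil / Atiyah criterion: $\EEE$ admits a holomorphic connection iff every indecomposable summand has degree $0$, and a degree-$0$ bundle all of whose summands have degree $0$ is semistable.)

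\textbf{The ``if'' direction.} Suppose $\EEE$ is semistable of degree $0$. By the Narasimhan--Seshadri theorem, a semistable bundle of degree $0$ is an extension-iterate of stable degree-$0$ bundles, each of which arises from an irreducible unitary representation of $\pi_1(X)$ and hence carries a (flat, in particular holomorphic) connection. Thus it suffices to show that the class of bundles admitting a holomorphic connection is closed under extensions: given $0\rar\EEE'\rar\EEE\rar\EEE''\rar0$ with holomorphic connections $\nabla'$ on $\EEE'$ and $\nabla''$ on $\EEE''$, the obstruction to gluing them into a connection on $\EEE$ lies in $H^1(X,\Hom(\EEE'',\EEE')\otimes\Omega_X^1)$, and one shows this obstruction vanishes because both $\EEE'$ and $\EEE''$ are semistable of degree $0$ (so $\Hom(\EEE'',\EEE')$ is semistable of degree $0$, and the relevant component of the Atiyah class of the extension vanishes). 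Equivalently, and more concretely on a curve, one can simply take a $C^\infty$ splitting of the extension and the direct-sum connection $\nabla'\oplus\nabla''$; its $(0,1)$-part need not be $\bar\partial_\EEE$, but the discrepancy is a $\bar\partial$-closed $(0,1)$-form with values in $\Hom(\EEE'',\EEE')$ which, on a curve, can be corrected because $\deg(\EEE)=0$ and semistability kills the obstruction.

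\textbf{Main obstacle.} The genuinely non-formal input is the ``if'' direction: producing a connection on an arbitrary semistable degree-$0$ bundle. Over an arbitrary base this would be delicate, but on a curve the Narasimhan--Seshadri theorem gives flat unitary connections on the stable degree-$0$ pieces for free, so the real work is the extension/gluing step — controlling the Atiyah class of an extension of semistable degree-$0$ bundles. I expect the proof to simply cite Atiyah's original paper \cite{A0} for this, since the excerpt introduces the theorem precisely as an external input; the surrounding text only needs the \emph{statement} (to decide when $\EEE\in\im\pi$ in the case $D=0$), so a one-line proof ``See \cite{A0}'' is the honest and expected move here.
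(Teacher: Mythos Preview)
You are right that the paper gives no proof: the theorem is simply stated and attributed to \cite{A0}, exactly as you anticipated in your final paragraph.

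However, there is a genuine mathematical issue you should be aware of, because it affects your sketched argument. The statement as printed in the paper is \emph{not} Atiyah's theorem; it is a misquotation. Atiyah's actual criterion (Weil--Atiyah) is that $\EEE$ admits a holomorphic connection if and only if every indecomposable direct summand of $\EEE$ has degree $0$. This is \emph{not} equivalent to ``$\EEE$ is semistable of degree $0$''. The implication ``semistable of degree $0$ $\Rightarrow$ every indecomposable summand has degree $0$'' holds (each summand is both a sub and a quotient), so your ``if'' direction is salvageable. But the converse fails: on a curve of genus $g\geq 2$, take line bundles $L_1,L_2$ with $\deg L_1=1$, $\deg L_2=-1$, and $L_1\otimes L_2^{-1}\simeq K_X$; then $\Ext^1(L_2,L_1)\simeq H^1(K_X)\simeq\CC$, and any nontrivial extension $0\to L_1\to\EEE\to L_2\to 0$ is indecomposable of degree $0$ (hence admits a holomorphic connection by Atiyah), yet is destabilized by $L_1$.

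Consequently your ``only if'' argument cannot work as written: you are trying to prove a false statement. In particular, your parenthetical claim that ``a degree-$0$ bundle all of whose indecomposable summands have degree $0$ is semistable'' is exactly what fails. The Harder--Narasimhan reduction you propose also breaks down, because $\nabla$ need not preserve the HN filtration. None of this matters for the paper's application: the surrounding text only needs the criterion for \emph{stable} $\EEE$, and a stable bundle is indecomposable, so ``$\EEE$ stable admits a holomorphic connection $\iff$ $\deg\EEE=0$'' is the correct (and sufficient) statement there.
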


Note also that the dimension of fibres of $\pi$ is constant, as follows from the Riemann-Roch theorem.
This implies that $\pi$ is a locally trivial fiber bundle with fiber an affine space and with structure group the
affine group. The local triviality follows only in the classical and the \'etale topologies, for $\CCC^0_X(r,d;D)$ is
not a fine moduli space in general and possesses universal connections $(\boldsymbol{\EEE},\boldsymbol{\nabla})$ only in the classical or
\'etale topology.
Summarizing the above, we state the following theorem.
\begin{theorem}\label{struc}
Let $X$ be  curve of genus $g\geq 1$, $D$ an effective divisor on $X$, $r\in\NN$ and $d\in\ZZ$. Then the following
assertions hold:

$(i)$ Assume $D=0$. Then $\CCC_X^0(r,d;0)=\varnothing$ if $d\neq 0$, and $\CCC_X^0(r,0;0)=M_X^s(r,0)=\varnothing$ if $r>1$ and
$g=1$. In the case when either $g\geq 2$, or $r=g=1$, the map $\pi:\CCC_X^0(r,0;0)\rar M_X^s(r,0)$ is an affine bundle, locally trivial
in the \'etale topology, with fiber $\CC^{r^2(g-1)+1}$.

$(ii)$ Assume $D>0$ and $M_X^s(r,d)\neq\varnothing$, that is either $g\geq 2$, or $g=1$ and $g.c.d.(r,d)=1$. Then the map
$\pi:\CCC_X^0(r,d;D)\rar M_X^s(r,d)$ is an affine bundle, locally trivial
in the \'etale topology, with fiber $\CC^{r^2(g-1+\deg(D))}$.
\end{theorem}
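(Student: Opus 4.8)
The plan is to analyze the forgetful map $\pi:\CCC_X^0(r,d;D)\rar M_X^s(r,d)$ fiberwise and deduce local triviality from the constancy of the fiber dimension together with the existence of local universal families in the \'etale topology.

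First I would treat case $(i)$, where $D=0$. The emptiness statements come from Atiyah's theorem (just cited): a bundle $\EEE$ on a curve admits a holomorphic connection iff $\EEE$ is semistable of degree $0$, so $\CCC_X^0(r,d;0)=\varnothing$ whenever $d\neq 0$; and for $r>1$, $g=1$ there are no stable bundles of degree $0$ at all, so $M_X^s(r,0)=\varnothing$ as well. In the remaining subcase ($g\geq 2$, or $r=g=1$), every stable $\EEE$ of degree $0$ lies in $\im\pi$, again by Atiyah, so $\pi$ is surjective, and the fiber over $[\EEE]$ is a torsor under $H^0(X,\mathcal{E}nd(\EEE)\otimes\Omega_X^1)$. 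I would compute the dimension of this space by Riemann-Roch on $\mathcal{E}nd(\EEE)\otimes\Omega_X^1$: since $\deg\Omega_X^1=2g-2$ and $\rk\mathcal{E}nd(\EEE)=r^2$, and $h^1(X,\mathcal{E}nd(\EEE)\otimes\Omega_X^1)=h^0(X,\mathcal{E}nd(\EEE))=1$ by Serre duality and stability, one gets $h^0=r^2(g-1)+1$, giving the claimed fiber $\CC^{r^2(g-1)+1}$.

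Next I would treat case $(ii)$, $D>0$. The hypothesis $g\geq 2$, or $g=1$ with $\gcd(r,d)=1$, is exactly what guarantees $M_X^s(r,d)\neq\varnothing$. Here the key point, already established in the text preceding the theorem, is that $H^1(X,\mathcal{E}nd(\EEE)\otimes\Omega_X^1(D))=0$ for stable $\EEE$ when $D>0$ (by Serre duality the dual is $H^0(X,\mathcal{E}nd(\EEE)\otimes\OOO_X(-D))$, which vanishes since $\mathcal{E}nd(\EEE)$ has only homotheties as global sections and $\OOO_X(-D)$ has no sections for $D>0$). Hence the Atiyah class $\At^D(\EEE)$ automatically vanishes, so $\pi$ is surjective, and every fiber is the affine space $H^0(X,\mathcal{E}nd(\EEE)\otimes\Omega_X^1(D))$. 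Riemann-Roch again: $\deg(\mathcal{E}nd(\EEE)\otimes\Omega_X^1(D)) = r^2(2g-2+\deg D)$, and with $h^1=0$ one obtains $h^0 = r^2(g-1+\deg D)$, the claimed fiber dimension.

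The main obstacle — and the step I would spend the most care on — is passing from "$\pi$ is surjective with all fibers affine spaces of constant dimension" to "$\pi$ is an affine bundle, locally trivial in the \'etale topology, with affine structure group." This is where I would invoke Corollary \ref{coro Luna} and the Luna slice construction: since $\EEE$ is stable, $\Aut(\EEE,\nabla)=\CC^*$ acts trivially on the slice direction, so the local model of the moduli space near $z=[(\EEE,\nabla)]$ is smooth and the forgetful map is locally modeled on the projection $\Ext^1_\Lambda(\EEE,\EEE)\to\Ext^1_{\OOO_X}(\EEE,\EEE)$, which by the exact triple in the proof of the previous theorem is a surjection of vector spaces with kernel $H^0(\mathcal{E}nd(\EEE)\otimes\Omega_X^1(D))/\nabla(H^0(\mathcal{E}nd(\EEE)))$. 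I would then note that although $\CCC_X^0(r,d;D)$ is not a fine moduli space, it carries a universal pair $(\boldsymbol{\EEE},\boldsymbol{\nabla})$ \'etale-locally on $M_X^s(r,d)$ (the obstruction to a global one being a Brauer/gerbe class), and over such an \'etale chart $U\to M_X^s(r,d)$ the space $\pi^{-1}(U)$ is identified with the total space of the coherent sheaf $\pi_{U*}(\mathcal{E}nd(\boldsymbol{\EEE})\otimes\Omega^1_{X\times U/U}(D))$, which is locally free of the computed rank by cohomology-and-base-change (using the vanishing of $H^1$ in the fibers). Refining $U$ so that this locally free sheaf is free exhibits the local triviality; the transition functions are affine because two local trivializations differ by a section of $H^0(\mathcal{E}nd(\EEE)\otimes\Omega^1_X(D))$ acting by translation, so the structure group is the affine group. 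Assembling these observations gives both $(i)$ and $(ii)$.
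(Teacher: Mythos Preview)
Your proposal is correct and follows essentially the same approach as the paper: the discussion preceding the theorem already establishes surjectivity of $\pi$ via the Atiyah class (using Atiyah's theorem when $D=0$ and the Serre-duality vanishing $H^1(\mathcal{E}nd(\EEE)\otimes\Omega_X^1(D))=0$ when $D>0$), computes the constant fiber dimension by Riemann--Roch, and deduces \'etale-local triviality from the existence of \'etale-local universal connections. Your detour through Corollary~\ref{coro Luna} is not needed here---the paper reserves the Luna slice machinery for the strictly semistable boundary points in later subsections---but it does no harm, and your explicit appeal to cohomology-and-base-change to show that $\pi_{U*}(\mathcal{E}nd(\boldsymbol{\EEE})\otimes\Omega^1_{X\times U/U}(D))$ is locally free makes precise what the paper leaves as the terse remark ``this implies that $\pi$ is a locally trivial fiber bundle.''
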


\subsection{The case $g=1, r=2, d=0$: local structure of $\tilde{\CCC}_X(2,0;0)$ at the most degenerate point}\label{sect. 5.2}
Let $X$ be an elliptic curve, $\nabla=d$ the trivial connection, equal to the de Rham differential.
We refer to $(\EEE,\nabla)$ as the most degenerate point of $\tilde{\CCC}_X(2,0;0)$, the moduli space of 
regular connections on semistable rank-$2$ vector bundles of degree $0$ over an elliptic curve.
Let $\CCC^{\fatdot}$ be the complex of sheaves 
$$\mathcal{E}nd(\EEE)\xymatrix@1{\ar[r]^{\nabla}&}\mathcal{E}nd(\EEE)
\otimes\Omega^{1}_X.$$
Consider the long exact sequence
\arraycolsep=0.5ex
\begin{eqnarray}\label{lgexs}
0&\lra&\HH^0(X,\CCC^{\fatdot})\lra H^{0}(X,\mathcal{E}nd(\EEE))\xymatrix@1{\ar[r]^{d_1}&}H^{0}(X,\mathcal{E}nd(\EEE)
\otimes\Omega^{1}_X)\\ \nonumber
&\lra&\HH^{1}(X,\CCC^{\fatdot})\lra H^{1}(X,\mathcal{E}nd(\EEE)) \xymatrix@1{\ar[r]^{d_1}&}H^{1}(X,\mathcal{E}nd(\EEE)\otimes\Omega^{1}_X )\\ \nonumber
&\lra&\HH^{2}(X,\CCC^{\fatdot})\lra H^{2}(X,\mathcal{E}nd(\EEE))=0 
\nonumber\end{eqnarray}
coming from the spectral sequence 
$E_1^{p,q}=H^{q}(\CCC^p)\Rightarrow\HH^{p+q}(\CCC^{\bullet}),$ supported on two vertical strings
$p=0$ and $p=1$. The maps $d_1$ are commutators $B\mapsto [A,B]=
A\circ B-B\circ A$, where $\circ$ denotes the Yoneda composition.
The first map $d_1$ is zero, for it is induced by $\nabla_{\mathcal{E}nd(\EEE)}=d$ (de Rham differential),
and $H^0(X,\mathcal{E}nd(\EEE))=\CC^4\id_{\EEE}$, and the second map $d_1$ is the adjoint of the first one with
respect to the Serre duality, so it is zero, too.
We conclude that $\HH^2(X,\CCC^{\fatdot})\simeq H^1(X,\mathcal{E}nd(\EEE)\otimes\Omega^{1}_X )\simeq H^1(X,\OOO_X^{\oplus 4}\otimes\Omega_X^1)\simeq\CC^4,$
and \begin{eqnarray}\label{exac}  0\rar H^{0}(X,\mathcal{E}nd(\EEE)
\otimes\Omega^{1}_X)\rar\HH^{1}(X,\CCC^{\fatdot})\rar H^{1}(X,\mathcal{E}nd(\EEE)\rar 0\end{eqnarray}
is an exact triple, so that $\HH^{1}(X,\CCC^{\fatdot})\simeq\CC^8$.
We can represent the elements of $\HH^{1}(X,\CCC^{\fatdot})$ as the pairs $(A,a)$, where $a\in H^1(X,\mathcal{E}nd(\EEE))$ and $A\in H^0(X,\mathcal{E}nd(\EEE)\otimes\Omega^{1}_X).$ As $\Omega^1_X\simeq\OOO_X$, both $a,A$ can be represented by $2\times 2$-matrices,
which we will write down in the following form:
$$
A=\left( \begin{array}{cc} x & x_{12} \\ x_{21} & -x \end{array}\right)\ ,\ \ \  a=\left( \begin{array}{cc} y & y_{12} \\ y_{21} & -y
\end{array}\right)\delta,
$$
where $\delta$ is a generator of $H^1(X,\Omega^1_X)$.

The identification of $\HH^{1}(X,\CCC^{\fatdot})$ with $H^{0}(X,\mathcal{E}nd(\EEE)
\otimes\Omega^{1}_X) \oplus H^{1}(X,\mathcal{E}nd(\EEE))$ assumes that some cross-section of the epimorphism 
$\HH^{1}(X,\CCC^{\fatdot})\rar H^{1}(X,\mathcal{E}nd(\EEE))$ is fixed. In the case under consideration, \eqref{exac}
is just the standard Dolbeault cohomology triple
\begin{eqnarray}\label{Dolb}0\rar H^0(X,\Omega^1_X)\rar H^1(X,\CC)=\HH^1(X,\Omega^{\fatdot}_X)\rar H^1(X,\OOO_X)\rar 0\end{eqnarray} tensored by $\End(\CC^2)=H^0(X,\mathcal{E}nd(\EEE))$.
Thus $H^1(X,\OOO_X)$ can be identified with $H^{0,1}(X,\CC)=\overline{H^{1,0}(X,\CC)}$ in $H^1(X,\CC)$.
This fixes the choice of a splitting of (\ref{Dolb}) and hence of (\ref{exac}). The first obstruction map is then given by the commutator:
$$\ob_2: (A,a)\mapsto [A,a].$$ As the identity matrix commutes with any other matrix, we will split the summands $x_0\id$ and $y_0\id$
out of $A,a$. In fact, it is easy to prove that all the components of the Kuranishi map are independent of $x_0, y_0$, so that the Kuranishi space is in this case of the form
$K\simeq\CC^2\times\tilde{K}$. Here $\CC^2$ has $x_0,y_0$ as coordinates and $\tilde{K}$ is the zero locus of the reduced Kuranishi map
$\tilde{ob}=\tilde{\ob}_2+\tilde{\ob}_3+\dots:\CC^6\rar\CC^3$, where $\CC^6$ has $(x,x_{12},x_{21},y,y_{12},y_{21})$ as coordinates,
and $\CC^3$ in the target of $\tilde{\ob}$ is the traceless part of $H^1(X, \mathcal{E}nd(\EEE)\otimes\Omega^1_X)$. Computing
in coordinates, we obtain the initial term:
\begin{displaymath}\tilde{\ob}_2: 
\Big(A=\left( \begin{array}{cc} x & x_{12} \\ x_{21} & -x \end{array}\right)\ , a=\left( \begin{array}{cc} y & y_{12} \\ y_{21} & -y
\end{array}\right)\delta\Big)\mapsto [A,a]=\left( \begin{array}{cc} q_1 & q_2 \\ q_3 & -q_1\end{array}\right)\delta,\end{displaymath}
where $q_1=x_{12}y_{21}-x_{21}y_{12}, q_2=2xy_{12}-2x_{12}y, q_3=2x_{21}y-2y_{21}x$.
Thus $\tilde{\ob}_2^{-1}(0)$ is a complete intersection of three quadrics of rank $4$ in  $\CC^6$, $q_1=q_2=q_3=0$.
One might show by an argument of "equivariant deformation to the
normal cone", similar to the one used in \cite{L-S}, that $\tilde{\ob}_2^{-1}(0)$ is isomorphic to $\tilde{\ob}^{-1}(0)$, by an isomorphism, equivariant under the action of $\PP\Aut(\EEE,\nabla)\simeq\PGL(2,\CC)$, so that one can use $\tilde{\ob}_2^{-1}(0)$ instead of $\tilde{\ob}^{-1}(0)$ in order to determine the local structure of the moduli space. But, we will use another approach, proving directly that all the higher obstructions vanish in
our case: $\tilde{\ob}_3=\tilde{\ob}_4=\dots=0$ (see Sect. \ref{sect. 5.3}). 

Now we will determine the GIT quotient $Q:=\tilde{\ob}_2^{-1}(0)//\PGL(2,\CC)$.
We will proceed to a change of notation for the coordinates in $\CC^6$: 
$(x,x_{12},x_{21})=(s_0,s_1,s_2)$ and $(y,y_{12},y_{21})=(t_0,t_1,t_2).$
Then the three quadrics $q_1=q_2=q_3=0$ become $s_it_j=s_jt_i$. These equations express the proportionality $s=(s_0,s_1,s_2)\thicksim t=(t_0,t_1,t_2).$
The space of solutions is of dimension $4$; it can be expressed as the affine cone over the image of Segre $\PP^2\times\PP^1\hookrightarrow\PP^5:$\\
\begin{eqnarray*}
 \textrm{Cone}\Big(\PP^2\times\PP^1\Big) & \  \ra & \tilde{\ob}^{-1}_2\{0\}  \\ \nonumber
((\xi_0,\xi_1,\xi_2), (\lambda_0,\lambda_1) &  \mapsto\ & (s,t)=(\lambda_0\xi,\lambda_1\xi).\end{eqnarray*}
To determine the quotient of this cone by $\PGL(2,\CC)$, remark that the latter acts by simultaneous conjugation on the two components:
$$\PGL(2,\CC)\ni g:(A,a)\mapsto (gAg^{-1},gag^{-1}).$$
Consider the map
\begin{eqnarray*}
\phi:\tilde{\ob}^{-1}_2(0) & \  \ra & \CC^2 \\ \nonumber
(A,a) &  \mapsto\ & (z_1,z_2)=(\Tr(A^2),\Tr(a^2))
.\end{eqnarray*} 
It is given by $PGL(2,\CC)$-invariant functions, hence descends to a morphism $\psi:Q=\tilde{\ob}^{-1}_2(0)//\PGL(2)\rar\CC^2$. 
For a fixed pair $(z_1,z_2)$ with $z_1z_2\neq 0$, there are two orbits of pairs of commuting traceless matrices $(A,a)$ such that
$(\Tr(A^2),\Tr(a^2))=(z_1,z_2)$: these are orbits of two pairs \begin{eqnarray*} 
(A,a)=\Big\{\left(\begin{array}{cc} \frac{\sqrt{z_1}}{\sqrt{2}} & 0 \\ 0 & -\frac{\sqrt{z_1}}{\sqrt{2}} \end{array}\right),\pm\left(\begin{array}{cc} \frac{\sqrt{z_2}}{\sqrt{2}} & 0 \\ 0 & -\frac{\sqrt{z_2}}{\sqrt{2}} \end{array}\right)\Big\}. \end{eqnarray*}
They are distinguished by the $\PGL(2)$-invariant function $z=\Tr(Aa)$, taking opposite signs on them. There is only one orbit
in the fiber of $\psi$ if $z_1z_2=0$. Thus the map  \begin{eqnarray*}
\tilde{\psi}:Q=\tilde{\ob}^{-1}_2(0)//PGL(2,\CC) & \  \ra & Q_0=\{(z,z_1,z_2)\vert z^2=z_1z_2\} \\ \nonumber
(A,a) &  \mapsto\ & (\Tr(Aa),\Tr(A^2),\Tr(a^2))
,\end{eqnarray*} is a regular morphism which is bijective.  
As $Q_0$ is a normal variety,  $\tilde{\psi}$ is an isomorphism.
We have proved the following result:
\begin{theorem}
Let $X$ be an elliptic curve, $\EEE=\OOO_X\oplus\OOO_X$, $\nabla=d$ the de Rham connection on $\EEE$. Then the germ of 
$\tilde{\CCC}_X(2,0;0)$ at $(\EEE,\nabla)$ is analytically isomorphic to the germ $(\CC^2\times Q_0,0)$, where $Q_0$ is the
quadratic cone in $\CC^3$ with equation $z^2=z_1z_2$.
\end{theorem}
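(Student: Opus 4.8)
The plan is to combine Corollary \ref{coro Luna} with the explicit computations already performed in Section \ref{sect. 5.2}. By that corollary, the germ of the moduli space $\tilde{\CCC}_X(2,0;0)$ at $z=[(\EEE,\nabla)]$ is the good quotient $(K,0)//H$, where $K$ is the Kuranishi space of $(\EEE,\nabla)$ and $H=\Aut(\EEE,\nabla)$. So the proof proceeds in three stages: first identify $K$, then identify $H$ and its action, then compute the quotient.

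For the first stage, recall from Section \ref{sect. 5.2} that $\HH^1(X,\CCC^{\fatdot})\simeq\CC^8$ carries the obstruction map with values in $\HH^2(X,\CCC^{\fatdot})\simeq\CC^4$, and that the identity-matrix summands $x_0\id,y_0\id$ split off, giving $K\simeq\CC^2\times\tilde{K}$ with $\tilde{K}=\tilde{\ob}^{-1}(0)\subset\CC^6$. The crucial input is the vanishing of all higher obstructions, $\tilde{\ob}_3=\tilde{\ob}_4=\dots=0$, which is announced in Section \ref{sect. 5.2} and proved in Section \ref{sect. 5.3}; granting this, $\tilde{K}=\tilde{\ob}_2^{-1}(0)$ is exactly the complete intersection of the three quadrics $q_1=q_2=q_3=0$ in $\CC^6$, i.e.\ the affine cone over the Segre embedding $\PP^2\times\PP^1\hookrightarrow\PP^5$ via $((\xi_0,\xi_1,\xi_2),(\lambda_0,\lambda_1))\mapsto(\lambda_0\xi,\lambda_1\xi)$.

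For the second and third stages, the automorphism group acts by simultaneous conjugation, and modulo the scalars $\CC^{*}\id$ (which act trivially on traceless matrices and correspond to the harmless $\CC^2$ factor of $x_0,y_0$) the effective group is $\PGL(2,\CC)$. Thus $(K,0)//H\cong(\CC^2,0)\times\bigl(\tilde{\ob}_2^{-1}(0)//\PGL(2,\CC),0\bigr)$. It then remains to identify $Q:=\tilde{\ob}_2^{-1}(0)//\PGL(2,\CC)$ with the quadric cone $Q_0=\{z^2=z_1z_2\}\subset\CC^3$. Here one uses the $\PGL(2)$-invariants $z_1=\Tr(A^2)$, $z_2=\Tr(a^2)$, $z=\Tr(Aa)$: on $\tilde{\ob}_2^{-1}(0)$ the matrices $A,a$ commute, so they are simultaneously diagonalizable generically, which forces the Cayley–Hamilton-type relation $z^2=z_1z_2$ and shows the induced map $\tilde\psi:Q\to Q_0$ is well defined. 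One checks $\tilde\psi$ is bijective by the fiber analysis already sketched — over $(z_1,z_2)$ with $z_1z_2\neq0$ there are exactly two orbits of commuting traceless pairs, separated by the sign of $z$, and over the locus $z_1z_2=0$ there is a single orbit — so $\tilde\psi$ is a bijective morphism onto the normal variety $Q_0$, hence an isomorphism by Zariski's main theorem.

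The main obstacle is pinning down that the effective action really is $\PGL(2,\CC)$ with $\tilde{\ob}_2^{-1}(0)$ as the complete Kuranishi locus — that is, the reduction to the already-completed computations of Section \ref{sect. 5.2} and the vanishing $\tilde{\ob}_{\geq3}=0$ from Section \ref{sect. 5.3}. Once those are in hand, the remaining quotient computation is the elementary orbit analysis of pairs of commuting traceless $2\times2$ matrices under conjugation; passing from ``bijective morphism'' to ``isomorphism'' is immediate from normality of $Q_0$. The only subtlety worth a remark is that the identification is only \emph{analytic}, not necessarily algebraic, since the Kuranishi space exists a priori as a germ of complex analytic space; this is why the statement asserts an analytic isomorphism of germs.
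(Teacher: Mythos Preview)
Your proposal is correct and follows essentially the same approach as the paper: identify the Kuranishi space as $\CC^2\times\tilde{\ob}_2^{-1}(0)$ using the vanishing of higher obstructions from Section~\ref{sect. 5.3}, recognize the effective automorphism group as $\PGL(2,\CC)$ acting by simultaneous conjugation, and compute the quotient via the invariants $\Tr(A^2),\Tr(a^2),\Tr(Aa)$, concluding by the bijectivity-plus-normality argument. Your added remarks (the Cayley--Hamilton justification for $z^2=z_1z_2$ and the comment on why the isomorphism is only analytic) are minor elaborations rather than a different route.
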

We will end this subsection by determining the fiber of the natural map $\pi:\tilde{\CCC}_X(2,0;0)\rar M_X(2,0)$
over $[\EEE]$, the point representing the $S$-equivalence class of $\EEE=\OOO_X\oplus\OOO_X$. We represent the germ of 
$M_X(2,0)$ at $[\EEE]$ in the same manner as we did for $\tilde{\CCC}_X(2,0;0)$: this is the quotient of $sl_2(\CC)=\End(\CC^2)_0$
(the $a$-component of the pairs $(A,a)$) by the action of $\PGL(2,\CC)$.
$$\PGL(2,\CC)\ni g:a\mapsto gag^{-1}.$$
The quotient is just the affine space $\CC^2$ with coordinates $$\frac{1}{2}\Tr(a)=y_0, \Tr(a^2)=z_2.$$
Hence the germ of $\pi$ at $(\EEE,\nabla)$ is given by 
\begin{eqnarray*}
\pi:\CC^2\times Q_0 & \  \ra & \CC^2 \\ \nonumber
\Big((x_0,y_0),(z,z_1,z_2)\Big) &  \mapsto\ & (y_0,z_2)
,\end{eqnarray*} and the fiber over $[\EEE]=(0,0)\in\CC^2$ is 
\begin{eqnarray}\label{eQa}\CC^2=\{z=z_2=y_0=0\}\end{eqnarray} taken with multiplicity $2$, for the equation $z^2=z_1z_2$ reduces to $z^2$ modulo $z_2$.
The fact that it occurs with multiplicity $2$ implies that $\pi$ is not a locally trivial fibration near $[\EEE]$, though it is still equidimensional.

We remark also that the formulas (\ref{eQa}) describe the fiber in $\pi^{-1}([\EEE])$ only locally near $(\EEE,\nabla)$, but it is obvious that this fiber is globally identified with $H^0(X,\mathcal{E}nd(\EEE)\otimes\Omega^{1}_X)_0/\PP(\Aut(\EEE))\simeq\End(\CC^2)_0/\PGL(2,\CC)\simeq\CC^2.$
We conclude:
\begin{proposition}
In the situation of Theorem \ref{struc}, let $\pi:\tilde{\CCC}_X(2,0;0)\rar M_X(2,0)$ be the natural map of forgetting the connection component
of each pair $(\EEE,\nabla)$. Then the fiber of $\pi$ over the most degenerate point $\EEE=\OOO_X\oplus\OOO_X$ of $M_X(2,0)$ is
isomorphic to the affine plane $\CC^2$ taken with multiplicity~$2$.
\end{proposition}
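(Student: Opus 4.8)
The plan is to deduce the statement from the explicit germ computations already carried out in this subsection, and then to check that the scheme structure so obtained is the global one. First I would recall that, by Corollary \ref{coro Luna} and the Luna slice theorem, the germ of $\tilde{\CCC}_X(2,0;0)$ at $(\EEE,\nabla)$ is $(K,0)//H$ with $H=\Aut(\EEE,\nabla)$, and that this germ has been identified with $(\CC^2\times Q_0,0)$, where the factor $\CC^2$ carries the coordinates $x_0,y_0$ and $Q_0=\{z^2=z_1z_2\}\subset\CC^3$. In the same way, the germ of $M_X(2,0)$ at $[\EEE]$ is the GIT quotient of $sl_2(\CC)=\End(\CC^2)_0$, the $a$-component of the pairs $(A,a)$, by $\PGL(2,\CC)$, i.e. the affine plane with coordinates $y_0=\frac{1}{2}\Tr(a)$ and $z_2=\Tr(a^2)$; and, in these coordinates, the germ of $\pi$ is the map $((x_0,y_0),(z,z_1,z_2))\mapsto(y_0,z_2)$.

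Second, I would compute the scheme-theoretic fiber of this morphism over the origin $[\EEE]=(0,0)$. Imposing $y_0=0$ and $z_2=0$ on $\CC^2\times Q_0\subset\CC^5_{x_0,y_0,z,z_1,z_2}$, the defining equation $z^2=z_1z_2$ of the cone degenerates to $z^2=0$, so the fiber becomes $\Spec\CC[x_0,z_1,z]/(z^2)$: the plane with coordinates $x_0,z_1$ carrying a nilpotent of order $2$ in the $z$-direction. Its reduction is $\CC^2$, and its multiplicity along that reduction --- the length of the local ring at the generic point of $\{z=0\}$ --- is $2$. This settles the statement at the level of germs and shows that $\pi$ is not locally trivial near $[\EEE]$, though it stays equidimensional.

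Third, to pass from this local picture to the global fiber $\pi^{-1}([\EEE])$ I would argue directly with connections: a point of this fiber is a connection $\nabla$ with $D=0$ on the fixed bundle $\EEE=\OOO_X\oplus\OOO_X$ taken modulo $\Aut(\EEE)=\GL(2,\CC)$; since any two such connections differ by a section in $H^0(X,\mathcal{E}nd(\EEE)\otimes\Omega^1_X)$ and $\Omega^1_X\simeq\OOO_X$, the connections form the affine space $\End(\CC^2)$ on which $\Aut(\EEE)$ acts by conjugation with the scalars acting trivially. Hence $\pi^{-1}([\EEE])_{\mathrm{red}}\simeq\End(\CC^2)/\PGL(2,\CC)\simeq\CC^2$ via $A\mapsto(\Tr A,\Tr(A^2))$, an irreducible surface; comparing it with the local model $\{z^2=0\}$ near $(\EEE,\nabla)$ identifies the scheme-theoretic fiber as this $\CC^2$ carried with multiplicity~$2$.

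The step I expect to require the most care is this last globalization: one must know that the identification of the germ of $\tilde{\CCC}_X(2,0;0)$ with $\CC^2\times Q_0$ is an isomorphism of \emph{schemes}, not just of reduced germs --- this rests on $Q_0$ being normal, on $\tilde{\psi}$ having been shown to be an isomorphism rather than only bijective, and on the vanishing $\tilde{\ob}_3=\tilde{\ob}_4=\cdots=0$ established in Sect. \ref{sect. 5.3} --- and one must check that the nonreduced structure $z^2=0$ found at $(\EEE,\nabla)$ propagates along the entire component rather than being concentrated at that one point. Once the germ isomorphism is genuinely scheme-theoretic, the restriction of $z^2=z_1z_2$ to $z_2=0$ is immediate and yields the stated multiplicity.
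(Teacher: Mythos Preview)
Your proposal is correct and follows essentially the same approach as the paper: the paper's ``proof'' is precisely the discussion immediately preceding the Proposition, which (i) identifies the germ of $M_X(2,0)$ at $[\EEE]$ with the $\PGL(2)$-quotient of the $a$-component, (ii) writes the germ of $\pi$ as $((x_0,y_0),(z,z_1,z_2))\mapsto(y_0,z_2)$, (iii) reads off the fiber $\{y_0=z_2=0\}$ as the plane $\{z=z_2=y_0=0\}$ with the equation $z^2=0$, and then (iv) notes that globally the fiber is the quotient of the space of connection matrices on $\OOO_X\oplus\OOO_X$ by $\PP(\Aut\EEE)$. Your write-up actually improves on the paper in two small respects: you state the global identification as $\End(\CC^2)//\PGL(2,\CC)\simeq\CC^2$ via $A\mapsto(\Tr A,\Tr A^2)$, whereas the paper writes $\End(\CC^2)_0/\PGL(2,\CC)\simeq\CC^2$, which is at best imprecise (the traceless quotient is one-dimensional); and you make explicit that the local computation relies on the vanishing of the higher obstructions from Sect.~\ref{sect. 5.3} and on $\tilde\psi$ being an actual isomorphism of schemes, points the paper takes for granted.
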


\subsection{Vanishing of higher obstructions}\label{sect. 5.3}
In Sect. \ref{sect. 5.2}, we stated that when determining the local structure of the moduli space $\tilde{\CCC}_X(2,0;0)$ at
$(\EEE,\nabla)=(\OOO_X\oplus\OOO_X,d)$, one can replace the obstruction map $\ob:\HH^1(X,\CCC^{\fatdot})\rar\HH^2(X,\CCC^{\fatdot})$
by its initial term $\ob_2$, which is quadratic on $\HH^1(X,\CCC^{\fatdot})$. We noted that this might be proved by an argument of
\cite{L-S}, but here we provide a simpler proof of a stronger assertion: the higher obstructions $\ob_k$ vanish for all $k\geq 3$.

As in Sect. \ref{sect. 5.2}, we will denote by $x_0,x,x_{12},x_{21}$ coordinates on $H^0(X,\mathcal{E}nd(\EEE)\otimes\Omega_X^1)$ and
by $y_0,y,y_{12},y_{21}$ coordinates on $H^1(X,\mathcal{E}nd(\EEE))$. Let us fix a flat complex coordinate $z$ on $X$ and denote by $P$
the point $z=0$. To compute the cohomology of (complexes of) coherent sheaves on $X$, we will use the two-element Stein open covering
$X=U_0\cup U_1$, where $U_0=\{\left|z\right|<\epsilon\}$ is a small disc centered at $P$, and $U_1=X\setminus\{P\}$. We denote a basis
of $H^1(X,\OOO_X)$ by $\delta$, as in Sect. \ref{sect. 5.2}, and will represent it by the $\check{C}$ech cocycle $\delta_{\alpha\beta}=\frac{1}{z}\in
\Gamma(U_{\alpha}\cap U_{\beta},\OOO_X).$
Here and in the sequel, $\alpha=0,\beta=1$. We will also specify a section $\sigma:H^1(X,\OOO_X)\rar\HH^1(X,\Omega_X^{\fatdot})$ on the level of
$\check{C}$ech cocycles. A cocycle defining $\sigma(\delta)$ is given by $\phi_{\gamma}\in\Gamma(U_{\gamma},\Omega_X^1)$ $(\gamma=0,1)$ such that
$$\phi_{\beta}-\phi_{\alpha}=\ud(\delta_{\alpha\beta})=-\frac{\ud z}{z^2}.$$
\begin{lemma}\label{phi}
For any $k\in\ZZ\setminus\{-1\}$, there exists a $0$-cochain $\phi_k=(\phi_{\gamma,k})\in\check{C}^0(\mathfrak{U},\Omega_X^1)$
such that $$\phi_{\beta,k}-\phi_{\alpha,k}=\frac{\ud z}{z^k}$$ on $U_{\alpha}\cap U_{\beta},$ where $\mathfrak{U}=(U_0,U_1)$ denotes
the two-element covering.
\end{lemma}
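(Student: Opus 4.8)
The plan is to construct the cochain $\phi_k$ explicitly by distributing the differential form $\frac{\ud z}{z^k}$ between the two open sets. Since $\mathfrak{U}=(U_0,U_1)$ with $U_0$ a small disc around $P$ and $U_1=X\setminus\{P\}$, the intersection $U_0\cap U_1$ is a punctured disc, and the form $\frac{\ud z}{z^k}$ is holomorphic there. First I would treat the two regimes separately. For $k\leq 0$, the form $\frac{\ud z}{z^k}=z^{-k}\,\ud z$ is holomorphic on all of $U_0$, so I would set $\phi_{\alpha,k}=0$ on $U_0$ and $\phi_{\beta,k}=\frac{\ud z}{z^k}$ on $U_1$; the latter is well-defined on $U_1=X\setminus\{P\}$ precisely because we are on a curve where $z$ restricts to a coordinate only near $P$ — more carefully, one uses that $z^{-k}\ud z$ for $k\le 0$ extends as a global meromorphic form with a pole only at $P$ of the appropriate (non-positive, i.e. none) order, so it is in fact holomorphic on $U_1$. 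For $k\geq 2$, the form $\frac{\ud z}{z^k}$ is holomorphic on $U_1$ (its only possible singularity is at $P$, which is excised), so I would set $\phi_{\beta,k}=0$ on $U_1$ and $\phi_{\alpha,k}=-\frac{\ud z}{z^k}$ on $U_0$, which is meromorphic on the disc $U_0$ but — and this is the subtle point — it need not be holomorphic there.

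The key observation that makes the lemma true (and explains why $k=-1$ is excluded) is that on $U_0\cap U_1$ one has the exact form identity $\frac{\ud z}{z^k}=\ud\!\left(\frac{z^{1-k}}{1-k}\right)$ whenever $k\neq 1$, and separately for $k=1$ the form $\frac{\ud z}{z}$ is \emph{not} exact (its class is the generator $\delta$, via $\delta_{\alpha\beta}=\frac1z$ pulled through $\ud$ — wait, $\ud(1/z)=-\ud z/z^2$, so the genuinely obstructed case in the construction of $\sigma$ is $k=2$, whereas here the excluded index $k=-1$ corresponds to $\frac{\ud z}{z^{-1}}=z\,\ud z$). Let me restate: for $k\neq 1$, write $\frac{\ud z}{z^k}=\ud g_k$ with $g_k=\frac{z^{1-k}}{1-k}$, a function that is holomorphic on $U_1=X\setminus\{P\}$ when $1-k\geq 0$, i.e. $k\leq 1$, actually when $1-k\leq 0$... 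The cleanest route, which I would adopt, is: decompose $g_k$ additively as $g_k=g_k^{(0)}+g_k^{(1)}$ where $g_k^{(0)}\in\Gamma(U_0,\OOO_X)$ and $g_k^{(1)}\in\Gamma(U_1,\OOO_X)$ — this decomposition exists because $H^1(X,\OOO_X)$ has the $\check{C}$ech cocycle $\frac1z$ as a basis and $g_k$, being a single power $z^{1-k}$ with $1-k\neq -1$, represents a \emph{trivial} class in $\check{C}^1$ relative to this covering (the only obstruction class being the one spanned by $z^{-1}$). Then set $\phi_{\gamma,k}=\ud g_k^{(\gamma)}$, so that $\phi_{\beta,k}-\phi_{\alpha,k}=\ud(g_k^{(1)}-g_k^{(0)})$; adjusting signs, $g_k^{(0)}-g_k^{(1)}$ differs from $g_k$ by a global holomorphic function (a constant, since $X$ is compact), hence $\ud(g_k^{(1)}-g_k^{(0)})=-\ud g_k=-\frac{\ud z}{z^k}$ up to sign, matching the required cocycle identity after fixing orientation conventions.

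The main obstacle I anticipate is bookkeeping the Laurent-tail decomposition correctly: one must verify that $z^{1-k}$ (equivalently, any holomorphic-on-punctured-disc function that is a single monomial other than $z^{-1}$) indeed splits as a sum of a function holomorphic on the disc plus one holomorphic on the complement of $P$. For $1-k\geq 0$ this is immediate ($z^{1-k}$ is already globally holomorphic near $P$, in fact polynomial in $z$, and extends off $P$ trivially by restriction to $U_1$ — though one should note $z$ itself is only a local function, so "extends to $U_1$" really means one picks the holomorphic-on-$U_1$ piece to be $0$ and the disc piece to be $z^{1-k}$). For $1-k<0$, i.e. $k\geq 2$, we have $g_k=\frac{z^{1-k}}{1-k}$ with a genuine pole at $P$: its principal part $\frac{z^{1-k}}{1-k}$ lives on $U_0$ (meromorphically) and the constant/holomorphic remainder is $0$; the point is that a \emph{single} negative power $z^{1-k}$ with $1-k\leq -2$ is a coboundary in $\check{C}^1(\mathfrak{U},\OOO_X)$ because $H^1$ is one-dimensional spanned by $z^{-1}$ — so $z^{1-k}=a_0-a_1$ with $a_\gamma\in\Gamma(U_\gamma,\OOO_X)$, but this fails exactly for $1-k=-1$, i.e. $k=2$... yet the statement excludes $k=-1$, not $k=2$. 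This discrepancy tells me I have the sign/placement of the pole backwards somewhere, and the honest resolution is simply: $\frac{\ud z}{z^k}$ is itself already holomorphic on the \emph{punctured} disc for \emph{every} $k$, holomorphic on $U_1$ for every $k$ (pole only at $P$), and holomorphic on $U_0$ only for $k\leq 0$; the only $k$ for which $\frac{\ud z}{z^k}$ is \emph{not} a coboundary for this covering is $k=1$ (giving a generator of $H^1(X,\Omega^1_X)$), so the lemma as stated with exclusion $k=-1$ must be using a different normalization of the exponent — in any case the construction is: take $\phi_{\beta,k}=0$, $\phi_{\alpha,k}=-\frac{\ud z}{z^k}$ when $k\geq 1$ is such that the class vanishes, and $\phi_{\alpha,k}=0$, $\phi_{\beta,k}=\frac{\ud z}{z^k}$ when $k\leq 0$, checking in each case that the nonzero component actually lies in the sections over the claimed open set — the excluded value being precisely the one where $\frac{\ud z}{z^k}$ pairs nontrivially against $H^1(X,\OOO_X)$ under Serre duality and so cannot be split. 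I would organize the final write-up as two cases ($k\leq 0$ and $k\geq 2$, with the boundary behavior checked directly) and invoke $\dim H^1(X,\OOO_X)=1$ at the single place it is needed.
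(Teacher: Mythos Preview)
Your final construction does not work, and the error is exactly the one you worried about but never resolved: you have the two open sets interchanged. Recall $\alpha=0$ indexes the small disc $U_0$ around $P$ and $\beta=1$ indexes $U_1=X\setminus\{P\}$, and crucially $z$ is only a \emph{local} coordinate near $P$ (on the elliptic curve $X=\CC/\Lambda$ the function $z$ is multivalued, though $\ud z$ is global). Thus for $k\ge 2$ your choice $\phi_{\alpha,k}=-\frac{\ud z}{z^k}$ is not a section of $\Omega^1_X$ over $U_0$: it has a pole at $z=0$. And for $k\le 0$ your choice $\phi_{\beta,k}=z^{-k}\ud z$ is not a section over $U_1$: the expression $z^{-k}$ with $-k\ge 1$ does not define a single-valued function on $X\setminus\{P\}$. (The easy case $k\le 0$ is fixed by swapping: take $\phi_{\alpha,k}=-z^{-k}\ud z$, holomorphic on the disc, and $\phi_{\beta,k}=0$.)

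The substantive missing idea is for $k\ge 2$. There you need a genuine meromorphic $1$-form on $X$, regular off $P$, whose principal part at $P$ is $\frac{\ud z}{z^k}$; then $\phi_{\beta,k}$ is that form restricted to $U_1$ and $\phi_{\alpha,k}=\phi_{\beta,k}-\frac{\ud z}{z^k}$ is holomorphic on the disc. Your antiderivative trick cannot produce this (as you noticed, it breaks precisely at $k=2$, where $g_2=-\tfrac1z$ generates $H^1(X,\OOO_X)$). The paper supplies the form explicitly via the Weierstrass $\wp$-function: since $\wp(z)=\tfrac{1}{z^2}+c_2z^2+\dots$ is elliptic and regular on $U_1$, one takes $\phi_{\beta,k}=\frac{(-1)^k}{(k-1)!}\,\wp^{(k-2)}(z)\,\ud z$ for $k\ge 2$. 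Abstractly this is Mittag--Leffler / Riemann--Roch for $\Omega^1_X(kP)$, and the residue theorem explains why $k=1$ is the genuine obstruction (so, as you suspected, the exclusion ``$k=-1$'' in the statement appears to be a misprint for ``$k=1$'').
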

\begin{proof}
The Weierstrass $\wp$-function has the following Laurent expansion at $0$:
$$\wp(z)=\frac{1}{z^2}+c_2z^2+c_4z^4+\dots.$$
It is regular on $U_1$. We can set 
$$\phi_{\beta,k}=\frac{(-1)^k}{(k-1)!}\wp^{(k-2)}(z)\ud z,\ \ \phi_{\alpha,k}=-\frac{\ud z}{z^k}+\phi_{\beta,k}\ \ \ \  (k\geq 2).$$
\end{proof}
The choice of $(\phi_{\gamma,k})$ is not unique. Let us fix such choices for all $k\geq 2$ once and for all;
set $\phi_{\gamma}=-\phi_{\gamma,2}$.
Now we are ready to fix a choice of coordinates on $\HH^1(X,\CCC^{\fatdot})$: these are $8$ linear forms
$x_0,x,x_{12},x_{21},y_0,y,y_{12},y_{21}$ assembled into two matrices 
\begin{eqnarray*}
T=\left(\begin{array}{cc} x_0+x & x_{12} \\ x_{21} & x_0-x \end{array}\right),
\qquad Y=\left( \begin{array}{cc} y_0+y & y_{12} \\ y_{21} & y_0-y \end{array}\right),\end{eqnarray*}
The pair $(A,a)=(T\ud z,Y\delta)$ corresponds to the cohomology class, represented by the cocycle 
\arraycolsep=0.5ex
\begin{eqnarray*}
\big((T-\phi_{\alpha,2}Y), (T-\phi_{\beta,2}Y), \frac{1}{z}Y\big)\in\Gamma(U_{\alpha},\mathcal{E}nd(\EEE)\otimes\Omega_X^1)\oplus\Gamma(U_{\beta},\mathcal{E}nd(\EEE)\otimes\Omega_X^1)\\ \nonumber
\oplus\Gamma(U_{\alpha}\cap U_{\beta},\mathcal{E}nd(\EEE)).\end{eqnarray*}
An order-$k$ deformation of the trivial bundle $\EEE=\OOO\oplus\OOO$ will be represented by a cocycle 
$G_{\alpha\beta}^{(k)}=G_{\alpha\beta,0}+G_{\alpha\beta,1}+\dots+G_{\alpha\beta,k}$, where $G_{\alpha\beta,0}=\id_2$ and $G_{\alpha\beta,k}\in\Gamma(U_{\alpha}\cap U_{\beta},\mathcal{E}nd(\EEE))\otimes
k[y_0,y,y_{12},y_{21}]$ is homogeneous of degree $k$ in  $y_0,y,y_{12},y_{21}$.
The forms $G_{\alpha\beta,k}$ are not subject to any constraints (cocycle condition), because our covering has no triple intersections.
Similarly, an order-$k$ deformation of $\nabla$ will be given by connection matrices $A_{\gamma}^{(k)}$,
$$A_{\gamma}^{(k)}=A_{\gamma,0}+A_{\gamma,1}+\dots+A_{\gamma,k} \ (\gamma=0,1),$$
where $A_{\gamma,0}=0$, and $A_{\gamma,k}\in\Gamma(U_{\gamma},\mathcal{E}nd(\EEE)\otimes\Omega_X^1)\otimes k[x_0,x,x_{12},x_{21},y_0,y,y_{12},y_{21}]$
is homogeneous of degree $k$.
\begin{theorem}
There exist sequences $(G_{\alpha\beta,k})_{k\geq 0}, (A_{0,k})_{k\geq 0},(A_{1,k})_{k\geq 0}$, where 
$G_{\alpha\beta,k}\in\Gamma(U_{\alpha}\cap U_{\beta},\mathcal{E}nd(\EEE))\otimes k[x_0,x,x_{12},x_{21}],
A_{\gamma,k}\in\Gamma(U_{\gamma},\mathcal{E}nd(\EEE)\otimes\Omega_X^1)\otimes k[x_0,x,x_{12},x_{21},y_0,y,y_{12},y_{21}],$
are homogeneous of degree $k$ ($\alpha=0,\beta=1,\gamma\in\{0,1\}),$ with the following properties\\
$(i)$ $G_{\alpha\beta,0}=\id_2,\ G_{\alpha\beta,1}=\frac{1}{z}Y.$\\
$(ii)$ $A_{\gamma,0}=0,\ A_{\gamma,1}=T-\phi_{\gamma,2}Y$ \ ($\gamma=0,1)$.\\
$(iii)$ For any $k\geq 1$, the polynomials $G_{\alpha\beta}^{(k)}=\sum_{i=0}^{k}G_{\alpha\beta,i}$ and $A_{\gamma}^{(k)}=\sum_{i=0}^{k}A_{\gamma,i}$
satisfy the following congruence:
$$\ud G_{\alpha\beta}^{(k)}\equiv G_{\alpha\beta}^{(k)}A_{\beta}^{(k)}-A_{\alpha}^{(k)}G_{\alpha\beta}^{(k)} \mod I_k,$$
where $I_k=(q_1,q_2,q_3)+\mathfrak{m}^{k+1},$ $q_1=x_{12}y_{21}-x_{21}y_{12}, q_2=2xy_{12}-2x_{12}y, q_3=2x_{21}y-2y_{21}x$,
$\mathfrak{m}=(x_0,x,x_{12},x_{21},y_0,y,y_{12},y_{21}).$
\end{theorem}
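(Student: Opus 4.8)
The plan is to exhibit the entire deformation in closed form and then to read off its homogeneous components. Work over the complete local ring $R=\widehat{\OOO}/(q_1,q_2,q_3)$, where $\widehat{\OOO}=k[[x_0,x,x_{12},x_{21},y_0,y,y_{12},y_{21}]]$. Everything hinges on one observation: modulo the three quadrics the matrices $T$ and $Y$ commute, since a direct computation gives
\[
[T,Y]=TY-YT=\begin{pmatrix} q_1 & q_2\\ q_3 & -q_1\end{pmatrix},
\]
so $[T,Y]\equiv 0\pmod{(q_1,q_2,q_3)}$. This is exactly the algebraic mechanism behind the vanishing of the higher obstructions, and it is what the explicit formulas below exploit.

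Granting this, I would take over $R$
\[
G_{\alpha\beta}=\exp\!\Big(\tfrac1z\,Y\Big)=\sum_{k\ge 0}\tfrac1{k!}\,z^{-k}Y^{k},
\qquad
A_\gamma=T-\phi_{\gamma,2}\,Y \quad(\gamma\in\{\alpha,\beta\}=\{0,1\}),
\]
where $\phi_{\gamma,2}$ are the $1$-forms furnished by Lemma \ref{phi} for $k=2$; thus $\phi_{\beta,2}=\wp(z)\,\ud z$ is regular on $U_1$, $\phi_{\alpha,2}=-z^{-2}\ud z+\wp(z)\,\ud z$ is regular on $U_0$ (using $\wp(z)=z^{-2}+c_2z^2+\cdots$), and $\phi_{\beta,2}-\phi_{\alpha,2}=z^{-2}\ud z$. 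Here $G_{\alpha\beta}$ is an invertible matrix of holomorphic functions on $U_\alpha\cap U_\beta$ with coefficients in $R$ (each truncation modulo $\mathfrak m^{N}$ being a Laurent polynomial in $z$), and $A_\gamma$ is a matrix of holomorphic $1$-forms on $U_\gamma$. Over $R$ the matrices $T,Y$ and the scalar forms $\phi_{\gamma,2},\,z^{-1}\ud z$ lie in a single commutative subring, so $G_{\alpha\beta}$ commutes with $T$ and with $Y$; hence
\[
\ud G_{\alpha\beta}=G_{\alpha\beta}\,\ud\!\big(\tfrac1z Y\big)=-\,z^{-2}\ud z\cdot Y\,G_{\alpha\beta},
\qquad
G_{\alpha\beta}A_\beta-A_\alpha G_{\alpha\beta}=(\phi_{\alpha,2}-\phi_{\beta,2})\,Y\,G_{\alpha\beta}=-\,z^{-2}\ud z\cdot Y\,G_{\alpha\beta},
\]
so the cocycle identity $\ud G_{\alpha\beta}=G_{\alpha\beta}A_\beta-A_\alpha G_{\alpha\beta}$ holds \emph{exactly} over $R$.

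The rest is bookkeeping. One sets $G_{\alpha\beta,0}=\id_2$, $G_{\alpha\beta,k}=\tfrac1{k!}\,z^{-k}Y^{k}$ for $k\ge1$ (a matrix of functions on $U_\alpha\cap U_\beta$ whose coefficients are homogeneous of degree $k$ in $y_0,y,y_{12},y_{21}$, as in the normalisation recalled before the theorem), $A_{\gamma,0}=0$, $A_{\gamma,1}=T-\phi_{\gamma,2}Y$, and $A_{\gamma,k}=0$ for $k\ge2$; then $(i)$ and $(ii)$ are immediate. For $(iii)$, put $G^{(k)}_{\alpha\beta}=\sum_{i=0}^{k}G_{\alpha\beta,i}$ and note $A^{(k)}_\gamma=A_\gamma$ for all $k\ge1$. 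Since $G_{\alpha\beta}-G^{(k)}_{\alpha\beta}=\sum_{i>k}\tfrac1{i!}z^{-i}Y^{i}$ has all coefficients in $\mathfrak m^{k+1}$ and each $A_\gamma$ has coefficients in $\mathfrak m$, linearity of the cocycle expression in its $G$-argument gives
\[
\ud G^{(k)}_{\alpha\beta}-G^{(k)}_{\alpha\beta}A^{(k)}_\beta+A^{(k)}_\alpha G^{(k)}_{\alpha\beta}
=\big(\ud G_{\alpha\beta}-G_{\alpha\beta}A_\beta+A_\alpha G_{\alpha\beta}\big)+R^{(k)},
\]
where the first term on the right vanishes in $R$, hence lies in $(q_1,q_2,q_3)$, while $R^{(k)}$ has coefficients in $\mathfrak m^{k+1}$. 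Thus the left side lies in $I_k=(q_1,q_2,q_3)+\mathfrak m^{k+1}$, which is precisely congruence $(iii)$.

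Because the construction is explicit, there is no genuinely hard step once the identity $[T,Y]=\bigl(\begin{smallmatrix}q_1&q_2\\q_3&-q_1\end{smallmatrix}\bigr)$ has been spotted; the points that need care are $(a)$ verifying that $\exp(z^{-1}Y)$ and the $A_\gamma$ really take values in the (forms of) functions attached to the two members of the Stein covering $\mathfrak U=(U_0,U_1)$ — this is where Lemma \ref{phi} and the Laurent expansion of $\wp$ enter — and $(b)$ the elementary estimate that replacing the exact family over $R$ by its truncations $G^{(k)}_{\alpha\beta}$ introduces only an error in $\mathfrak m^{k+1}$. If one prefers an inductive proof, the order-$(k+1)$ obstruction to extending a solution is a class in $\HH^2(X,\CCC^{\fatdot})\cong H^1(X,\mathcal{E}nd(\EEE)\otimes\Omega^1_X)$ computed on the two-element covering (every coboundary $\ud(\text{function on }U_\alpha\cap U_\beta)$ except the $z^{-1}\ud z$-component being available), and the real point is again to check that its residual component lies in $(q_1,q_2,q_3)$ — the same commutativity input, now in cohomological disguise. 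Either way, letting $k\to\infty$ one obtains a formal deformation of $(\EEE,\nabla)$ over $\{q_1=q_2=q_3=0\}$ with bijective Kodaira--Spencer map, whence $\widetilde K=\{q_1=q_2=q_3=0\}$ and $\widetilde{\ob}_k=0$ for all $k\ge3$, as announced in Sect.\ \ref{sect. 5.2}.
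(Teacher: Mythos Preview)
Your proof is correct and is essentially the same as the paper's: both take $G_{\alpha\beta}=\exp(z^{-1}Y)$ and $A_\gamma=T-\phi_{\gamma,2}Y$, observe that $[T,Y]$ has entries $q_1,q_2,q_3,-q_1$ so that $T$ and $Y$ commute modulo $(q_1,q_2,q_3)$, and then verify the gauge-change identity $\ud G_{\alpha\beta}=G_{\alpha\beta}A_\beta-A_\alpha G_{\alpha\beta}$ directly. Your write-up is somewhat more explicit about the truncation step and the regularity of the data on the pieces of the covering, but the argument is the same.
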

In other words, there are only quadratic obstructions to extending $(\EEE,\nabla)$ to any order $k\geq 2$.
\begin{proof}
As \begin{eqnarray}[T,Y]=\left( \begin{array}{cc} q_1 & q_2 \\ q_3 & -q_1\end{array}\right),\end{eqnarray}
we can handle $T,Y$ as commuting matrices. A solution to $(i)-(iii)$ can be represented by the following power series:
$$G_{\alpha\beta}=\sum_{k=0}^{\infty} G_{\alpha\beta,k}=e^{\frac{1}{z}Y}, A_{\gamma}=\sum_{k=0}^{\infty} A_{\gamma,k}=T-\phi_{\gamma,2}Y$$
(so that $A_{\gamma,k}=0$ for all $k\neq 1$.) Then we have
\arraycolsep=0.5ex
\begin{eqnarray*}
\ud G_{\alpha\beta}&=&-\frac{1}{z^2}Ye^{\frac{1}{z}Y}\ud z=(\phi_{\alpha,2}-\phi_{\beta,2})Ye^{\frac{1}{z}Y}\\
&=&e^{\frac{1}{z}Y}(T-\phi_{\beta,2}Y)-(T-\phi_{\alpha,2}Y)e^{\frac{1}{z}Y}\\
&=&G_{\alpha\beta}A_{\beta}-A_{\alpha}G_{\alpha\beta},\end{eqnarray*} (the computation is done modulo $(q_1,q_2,q_3)$, so $T$ and $Y$ commute.)
\end{proof}
\begin{remark}
One might also set $G_{\alpha\beta,1}=\frac{1}{z}Y,$ $G_{\alpha\beta,k}=0\ \forall k\geq 2$, and construct $A_{\alpha,k}$ by
induction on $k$, using the $\phi_{\gamma,k}$ from Lemma \ref{phi}.
\end{remark}
\subsection{The local structure of $\tilde{\CCC}_X(2,0;0)$ for $g=1$: mildly degenerate case}
In the setting of Sect. \ref{sect. 5.2}. and \ref{sect. 5.3}, let now $\EEE=\OOO_X\oplus\OOO_X$ and $\nabla=d+A$, with
$A\in H^0(X,\mathcal{E}nd(\EEE)\otimes\Omega_X^1)$ generic. Changing a basis for $\EEE$, we can assume $A$ diagonal:
\begin{eqnarray*}A=\left( \begin{array}{cc} \lambda_1 & 0 \\ 0 & \lambda_2 \end{array}\right)\ud z,\end{eqnarray*}
$\lambda_1\lambda_2\neq 0,\lambda_1\neq\lambda_2$.
Then the map $d_1$ on both $H^i(X,\mathcal{E}nd(\EEE))$ in the exact sequence (\ref{lgexs}) is given by $B\mapsto [A,B]=
A\circ B-B\circ A$, where $\circ$ denotes the Yoneda composition, and it has a two-dimensional kernel and $2$-dimensional
image. Thus we have the exact triple
\begin{eqnarray*}
0\rar M_2(\CC)\ud z/\left(\begin{array}{cc} 0 & \CC\ud z \\ \CC\ud z & 0\end{array}\right)\rar\HH^1(X,\CCC^{\fatdot})\rar\left(\begin{array}{cc} \CC\delta  & 0 \\ 0 & \CC\delta\end{array}\right)\rar 0\end{eqnarray*}
and \begin{displaymath}\HH^0(X,\CCC^{\fatdot})=\left( \begin{array}{cc} \CC\ud z & 0 \\ 0 & \CC\ud z \end{array}\right)\end{displaymath}
\begin{displaymath}\HH^2(X,\CCC^{\fatdot})=M_2(\CC)\delta\ud z/\left( \begin{array}{cc} 0 & \CC\delta \ud z  \\ \CC\delta \ud z & 0  \end{array}\right).\end{displaymath}
The obstruction map always takes values in the traceless part $\HH^2(X,\CCC^{\fatdot}_0)$, where
\begin{equation*} \CCC^{\bullet}_0=[\mathcal{E}nd_{0}(\EEE)\xymatrix@1{\ar[r]^{\nabla}&}\mathcal{E}nd_{0}(\EEE)\otimes_{\OOO_X}\Omega^1_X],\end{equation*}
which is $1$-dimensional in the case under consideration. Thus we might expect that the base of the versal deformation
is a hypersurface in the $4$-dimensional space $\HH^1(X,\CCC^{\fatdot})$. But this is impossible, for we can construct by hand
a $4$-dimensional family of connections, which is a deformation of $(\EEE,\nabla)$ and all of whose members are pairwise non-isomorphic.
Indeed, $\OOO_X\oplus\OOO_X$ deforms to $\LLL_1\oplus\LLL_2$ with $\LLL_i\in\Pic^0(X)$; each of the $L_i$ has a connection $d_i$, and we
get two more parameters in adding arbitrary multiples of $\ud z$ to $d_i$. The family obtained 
$(\LLL_1\oplus\LLL_2,(d_1+\alpha_1\ud z)\oplus(d_2+\alpha_2)\ud z)$ is the wanted $4$-parameterized deformation of $(\OOO_X\oplus\OOO_X,\nabla).$
We conclude that the obstruction map is zero, and moreover, as the members of our family are polystable connections, the base of the
family injects into the moduli space $\tilde{\CCC}_X(2,0;0)$. This implies that though the automorphism group $\PP(\Aut(\EEE,\nabla))\subset\PGL(2,\CC)$
is non-trivial here, it acts trivially on $\HH^1(X,\CCC^{\fatdot})$, so that the neighborhood of $0$ in $\HH^1(X,\CCC^{\fatdot})$ is a local chart
for $\tilde{\CCC}_X(2,0;0)$ at $(\EEE,\nabla)$ (in the classical or \'etale topology). We summarize this in the following statement.
\begin{theorem}
Let $X$ be an elliptic curve, $\EEE=\OOO_X\oplus\OOO_X$, $\nabla=d+A$ with $A\in H^0(X,\mathcal{E}nd(\EEE)\otimes\Omega_X^1)$ generic. 
Then $\dim \HH^1(X,\CCC^{\fatdot})=4$, and the neighborhood of $0$ in $\HH^1(X,\CCC^{\fatdot})$ is a local chart
for $\tilde{\CCC}_X(2,0;0)$ at $(\EEE,\nabla)$, so that $\tilde{\CCC}_X(2,0;0)$ is smooth of dimension $4$ at $(\EEE,\nabla)$.
In particular, this means that the obstruction map $\ob:\HH^1(X,\CCC^{\fatdot})\rar\HH^2(X,\CCC^{\fatdot})$ vanishes and that $
\PP\Aut(\EEE,\nabla)$ acts trivially on $\HH^1(X,\CCC^{\fatdot})$.
\end{theorem}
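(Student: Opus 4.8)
The plan is to pin down $\dim\HH^1(X,\CCC^{\fatdot})$ by a direct hypercohomology computation, then to write down by hand a genuine four-parameter family of pairwise non-isomorphic polystable connections deforming $(\EEE,\nabla)$, and finally to compare dimensions in the Luna-slice model $M=K//H$ furnished by Corollary \ref{coro Luna}; that comparison will force the obstruction map to vanish and $H$ to act trivially on $\HH^1$.

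I would begin by normalising $A$: after a change of basis of $\EEE$ we may take $A=\diag(\lambda_1,\lambda_2)\,\ud z$ with $\lambda_1\lambda_2\neq0$ and $\lambda_1\neq\lambda_2$. Since $\EEE=\OOO_X^{\oplus2}$ and $\Omega^1_X\simeq\OOO_X$, each of $H^i(X,\mathcal{E}nd(\EEE))$ and $H^i(X,\mathcal{E}nd(\EEE)\otimes\Omega^1_X)$ is $\CC^4$ for $i=0,1$, and $H^2$ vanishes. In the hypercohomology long exact sequence of $\CCC^{\fatdot}=[\mathcal{E}nd(\EEE)\xrightarrow{\nabla}\mathcal{E}nd(\EEE)\otimes\Omega^1_X]$ the map $d_1$ on $H^0$ sends a constant matrix $B$ to $[A,B]=[\diag(\lambda_1,\lambda_2),B]\,\ud z$, which has rank $2$ — kernel the diagonal matrices, image the off-diagonal ones — precisely because $\lambda_1\neq\lambda_2$; by Serre duality the map $d_1$ on $H^1$ is its transpose and has rank $2$ as well. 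Hence $\dim\HH^0=2$, $\dim\HH^1=4$ and $\dim\HH^2=2$, which is the first assertion; moreover, being trace-free, the obstruction map actually takes values in the $1$-dimensional subspace $\HH^2(X,\CCC^{\fatdot}_{0})$.

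Next I would set up the local model and the family. By Corollary \ref{coro Luna}, in a classical or \'etale neighborhood of $z=[(\EEE,\nabla)]$ the moduli space $M:=\tilde{\CCC}_X(2,0;0)$ is the good quotient $K//H$, where $H=\Aut(\EEE,\nabla)=\{g\in\GL(2,\CC):gA=Ag\}$ is the diagonal torus $(\CC^{*})^2$ — connected and reductive — and $K=\ob^{-1}(0)\subseteq\HH^1=\CC^4$ is cut out by the obstruction map $\ob=\ob_2+\ob_3+\cdots\colon\HH^1\to\HH^2$, whose terms all have order $\geq2$. Over $B:=\Pic^0(X)\times\Pic^0(X)\times\CC^2$ I would then form the family of pairs $\bigl(\LLL_1\oplus\LLL_2,\,(d_{\LLL_1}+\alpha_1\,\ud z)\oplus(d_{\LLL_2}+\alpha_2\,\ud z)\bigr)$, with $d_{\LLL_i}$ a connection on the degree-$0$ line bundle $\LLL_i$ (such connections exist by the theorem of Atiyah quoted above). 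Near the point $p_0=(\OOO_X,\OOO_X,\lambda_1,\lambda_2)$, which maps to $(\EEE,\nabla)$, this is a flat family of semistable $\Lambda$-modules with Hilbert polynomial $P=P(\EEE)$, defined in the classical or \'etale topology only, because the Poincar\'e bundle on $X\times\Pic^0(X)$ need not carry a global relative connection.

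Each fibre of the family is polystable — every summand $(\LLL_i,\nabla_i)$ being a stable $\Lambda$-module of rank $1$ and degree $0$ — so two fibres are isomorphic iff they are $S$-equivalent iff their summands agree as unordered pairs, i.e.\ iff $(\LLL_1,\alpha_1)$ and $(\LLL_2,\alpha_2)$ agree up to the swap $\tau\colon(\LLL_1,\LLL_2,\alpha_1,\alpha_2)\mapsto(\LLL_2,\LLL_1,\alpha_2,\alpha_1)$. Hence the classifying map $c\colon B\to M$ has, near $p_0$, fibres equal to the $\tau$-orbits, and since $\lambda_1\neq\lambda_2$ we have $\tau(p_0)\neq p_0$, so $c$ is injective on a neighborhood of $p_0$ and $\dim_zM\geq4$. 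By versality of the Kuranishi family, $c$ factors near $p_0$ as $B\xrightarrow{\gamma}K\xrightarrow{q}K//H=M$, and injectivity of $c$ forces $\gamma$ to be injective near $p_0$, whence $\dim_0K\geq4$ and therefore $\dim_0K=4$ because $K\subseteq\CC^4$. If $\ob$ were not identically zero, one of its scalar components would be a nonzero power series of order $\geq2$ in the four coordinates of $\HH^1$, so $K$ would lie in a hypersurface germ of dimension $3$, which is impossible; hence $\ob\equiv0$ and $K=\HH^1=\CC^4$. Finally $\dim_zM\leq\dim_0K=4$, so $\dim_zM=4=\dim\CC^4$; a connected reductive group acting on $\CC^4$ whose good quotient has dimension $4$ has generic orbit of dimension $0$, hence fixes a dense open set and so acts trivially. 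Thus $H$ — and a fortiori $\PP\Aut(\EEE,\nabla)$ — acts trivially on $\HH^1$, and $M=K//H=\HH^1$ in a classical or \'etale neighborhood of $z$: a neighborhood of $0$ in $\HH^1$ is a local chart and $\tilde{\CCC}_X(2,0;0)$ is smooth of dimension $4$ at $(\EEE,\nabla)$. The step I expect to be the main obstacle is the construction of the four-parameter family together with the proof that its classifying map is injective near $p_0$: one has to accommodate the absence of a global relative connection on the Poincar\'e bundle — whence the \'etale or classical qualifier — and the genericity hypothesis $\lambda_1\neq\lambda_2$ is used exactly to break the $\tau$-symmetry at $p_0$, consistently with the moduli space being singular at the most degenerate point of Sect.\ \ref{sect. 5.2}.
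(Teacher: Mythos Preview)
Your proposal is correct and follows essentially the same strategy as the paper: compute $\dim\HH^1(X,\CCC^{\fatdot})=4$ from the hypercohomology long exact sequence using $\rk d_1=2$, then exhibit the explicit four-parameter family $(\LLL_1\oplus\LLL_2,(d_{\LLL_1}+\alpha_1\ud z)\oplus(d_{\LLL_2}+\alpha_2\ud z))$ of polystable connections, and finally compare dimensions through the Luna-slice description to force $\ob\equiv0$ and the triviality of the $\PP\Aut(\EEE,\nabla)$-action. Your write-up is in fact more detailed than the paper's own argument --- you make explicit the swap involution $\tau$ and use $\lambda_1\neq\lambda_2$ to obtain injectivity of the classifying map near $p_0$, and you spell out why a connected reductive group with four-dimensional good quotient on $\CC^4$ must act trivially --- whereas the paper simply asserts that the members of the family are pairwise non-isomorphic and that the base injects into the moduli space.
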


We remark that the map $\pi:\tilde{\CCC}_X(2,0;0)\rar M_X(2,0)$ is not locally trivial near $(\EEE,\nabla)$, for $(\EEE,\nabla)$
belongs to the same fiber $\pi^{-1}([\OOO_X\oplus\OOO_X])$ which we showed to be a multiple one. The failure of local triviality
is due to the fact that $\Aut(\EEE)$ is bigger than $\Aut(\EEE,\nabla)$. 
\subsection{Direct images from bielliptic curves}
Let $E$ be an elliptic curve, $f:C\rar E$ a bielliptic cover as in Section 2 of \cite{Machu-1}, $\LLL$ a line bundle on $C$
and $(\EEE,\nabla)=(f_*(\LLL),f_*(\nabla_\LLL))$ a direct image connection on $E$. We will use the notation from Section 4 of \cite{Machu-1}.
Let $\CCC_X$ denote the moduli space $\CCC_X(2,-1;p_+ +p_-)$, where $p_{\pm}$ are branch points of $f$. We will investigate $\CCC_X$ in
the neighborhood of $(\EEE,\nabla)$. We will first assume that $\EEE$ is stable.
\begin{lemma} If $\EEE$ is stable, then
$\dim H^i(E,\mathcal{E}nd(\EEE))=1 \ (i=0,1)$, and $\dim H^0(E,\mathcal{E}nd(\EEE)\otimes\Omega_X^1(D))=8$, $\dim H^1(E,\mathcal{E}nd(\EEE)\otimes\Omega_X^1(D))=0$, where $D=p_+ +p_-$.
\end{lemma}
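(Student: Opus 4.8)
The plan is to compute the four cohomological invariants by reducing everything to a Riemann--Roch count on the elliptic curve $E$, once we know that $\mathcal{E}nd(\EEE)$ splits off a trivial line subbundle (the homotheties) and that its traceless part $\mathcal{E}nd_0(\EEE)$ is a rank-$3$ bundle of degree $0$ with no global sections. First I would observe that, since $\deg(\EEE) = -1$ and $\rk(\EEE) = 2$ are coprime, stability of $\EEE$ is equivalent to it being a stable bundle in the usual sense, so $h^0(E,\mathcal{E}nd(\EEE)) = 1$ by the standard fact that the only global endomorphisms of a stable bundle are the scalars. Then $h^1(E,\mathcal{E}nd(\EEE)) = 1$ follows immediately from Riemann--Roch on a genus-$1$ curve: $\chi(\mathcal{E}nd(\EEE)) = \deg(\mathcal{E}nd(\EEE)) + \rk(\mathcal{E}nd(\EEE))(1-g) = 0 + 4\cdot 0 = 0$, so $h^1 = h^0 = 1$.

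Next I would handle the twisted groups. Here $D = p_+ + p_-$ has degree $2$, and on the elliptic curve $E$ one has $\Omega^1_E \simeq \OOO_E$, hence $\Omega^1_E(D) = \OOO_E(D)$ is a line bundle of degree $2$. Therefore $\mathcal{E}nd(\EEE)\otimes\Omega^1_E(D)$ has rank $4$ and degree $4\cdot 2 = 8$, so by Riemann--Roch $\chi\bigl(\mathcal{E}nd(\EEE)\otimes\Omega^1_E(D)\bigr) = 8$. It remains to show the $H^1$ of this sheaf vanishes, which gives $h^0 = 8$ and $h^1 = 0$ simultaneously. By Serre duality on $E$ (using $K_E \simeq \OOO_E$),
\[
H^1\bigl(E,\mathcal{E}nd(\EEE)\otimes\Omega^1_E(D)\bigr)^{*} \simeq H^0\bigl(E,\mathcal{E}nd(\EEE)\otimes\OOO_E(-D)\bigr),
\]
using that $\mathcal{E}nd(\EEE)$ is self-dual. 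So the problem reduces to showing $H^0\bigl(E,\mathcal{E}nd(\EEE)(-D)\bigr) = 0$. The bundle $\mathcal{E}nd(\EEE)(-D)$ has degree $-8 < 0$; a nonzero section would give a nonzero map $\OOO_E \to \mathcal{E}nd(\EEE)(-D)$, equivalently $\OOO_E(D) \to \mathcal{E}nd(\EEE)$, i.e.\ a global section of $\mathcal{E}nd(\EEE)(-D)$ viewed as a homomorphism. I would argue that any such homomorphism must vanish: composing with the trace $\mathcal{E}nd(\EEE) \to \OOO_E$ and with projection to $\mathcal{E}nd_0(\EEE)$, one reduces to showing that $\mathcal{E}nd(\EEE)$ and $\mathcal{E}nd_0(\EEE)$ have no sub-line-bundle of positive degree; but $\mathcal{E}nd(\EEE) \simeq \OOO_E \oplus \mathcal{E}nd_0(\EEE)$ (in characteristic $0$) and $\mathcal{E}nd_0(\EEE)$ is semistable of slope $0$ because $\EEE$ is stable, so indeed it has no sub-line-bundle of positive degree. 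Hence $H^0\bigl(E,\mathcal{E}nd(\EEE)(-D)\bigr) = 0$ and the twisted $H^1$ vanishes.

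The main obstacle, and the only place where stability is genuinely used beyond the scalar-endomorphism statement, is the vanishing $H^0\bigl(E,\mathcal{E}nd(\EEE)(-D)\bigr) = 0$: one must make sure a positive-degree twist-down of a semistable-of-slope-$0$ bundle has no sections. I would phrase this via the slope inequality for semistable bundles ($\mathcal{E}nd_0(\EEE)(-D)$ has negative slope, so $h^0 = 0$ outright), which is cleaner than chasing sub-line-bundles. Everything else is bookkeeping with Riemann--Roch and Serre duality on a curve of genus $1$, together with the identification $\Omega^1_E \simeq \OOO_E$ that is special to the elliptic case; I would collect these into the short computation above and then read off the four stated numbers.
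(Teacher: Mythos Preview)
Your argument is correct. The paper, however, takes a more hands-on route: it realizes $\EEE$ explicitly as a nontrivial extension
\[
0 \longrightarrow \OOO_E(-p) \longrightarrow \EEE \longrightarrow \OOO_E \longrightarrow 0
\]
with $\det\EEE \simeq \OOO_E(-p)$, twists this sequence by $\OOO_E(D)$ and by $\OOO_E(D+p)$ to compute $h^i(\EEE(D))$ and $h^i(\EEE(D+p))$ directly, and then tensors the extension by $\EEE^*(D)\simeq\EEE(D+p)$ to obtain
\[
0 \longrightarrow \EEE(D) \longrightarrow \mathcal{E}nd(\EEE)\otimes\OOO_E(D) \longrightarrow \EEE(D+p) \longrightarrow 0,
\]
from which the cohomology of $\mathcal{E}nd(\EEE)\otimes\Omega^1_E(D)$ is read off (using $\Omega^1_E\simeq\OOO_E$).

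Your approach is more conceptual and portable: you never choose a presentation of $\EEE$, and the key vanishing $H^0(\mathcal{E}nd(\EEE)(-D))=0$ follows from the semistability of $\mathcal{E}nd(\EEE)$ (or, even more directly, from the observation that a section of $\mathcal{E}nd(\EEE)(-D)$ is a map $\EEE\to\EEE(-D)$ between stable bundles with $\mu(\EEE)>\mu(\EEE(-D))$, hence zero). This argument works for any stable bundle on any curve and any effective $D>0$. The paper's computation, by contrast, is entirely self-contained---it does not appeal to the theorem that tensor products of stable bundles are semistable---and it produces the intermediate numbers $h^i(\EEE(D))$, $h^i(\EEE(D+p))$ along the way.
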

\begin{proof}
$\EEE$ can be obtained as an extension 
\begin{equation}\label{exte}
0\rar\OOO_E(-p)\rar\EEE\rar\OOO_E\rar 0
,\end{equation}
where $p\in E$ is such that $\det(\EEE)\simeq\OOO_E(-p)$. The Bockstein homomorphism 
$\partial:H^0(E,\OOO_E)\rar H^1(E,\OOO_E(-p))$ is the product with the extension class of (\ref{exte}), hence an isomorphism, otherwise the extension would be trivial
and $\EEE$ would be unstable. Twisting (\ref{exte}) by $\OOO_E(D)$ and $\OOO_E(D+p)$, we find $\dim H^0(E,\EEE(D))=3$,
$\dim H^0(E,\EEE(D+p))=5$, and  $\dim H^1(E,\EEE(D))=\dim H^0(E,\EEE(D+p))=0$.
Finally, tensoring (\ref{exte}) by $\EEE^* (D)\simeq\EEE(D+p)$, we obtain the exact triple
$$0\rar\EEE(D)\rar\mathcal{E}nd(\EEE)\otimes\OOO_E(D)\rar\EEE(D+p)\rar 0,$$ which implies the result since $\Omega^1_E\simeq\OOO_E$.
\end{proof}
Thus the exact sequence (\ref{lgexs}) implies that  $\HH^2(E,\CCC^{\fatdot})=0$, so that the infinitesimal deformations
are unobstructed. Further, 
$$\HH^0(E,\CCC^{\fatdot})\simeq H^0(E, \mathcal{E}nd(\EEE))\simeq\CC,$$ (by stability),
and the triple
\begin{equation}\label{exect}
0\rar\ H^0(E,\mathcal{E}nd(\EEE)\otimes\Omega_X^1(D))\rar\HH^1(E,\CCC^{\fatdot})\rar H^1(E,\mathcal{E}nd(\EEE))\rar 0
\end{equation} is exact.
Hence $\dim\HH^1(E,\CCC^{\fatdot})=9$, the base of the versal deformation is smooth, and $\Aut(\EEE,\nabla)$ is trivial.
Similarly, deformations of $\EEE$ are unobstructed and $\Aut\EEE$ is trivial.
We deduce:

\begin{proposition}
In the above notation, the neighborhood of zero in $\HH^1(E,\CCC^{\fatdot})$ (resp. $H^1(X,\mathcal{E}nd(\EEE))$) is a local chart
for $\CCC^0_E=\CCC^0_E(2,-1; D)$ (resp. $M_E(2,-1)$), and the natural projection $\pi:\CCC^0_E\rar M_E(2,-1)$,
$(\EEE,\nabla)\mapsto [\EEE]$, is represented in these charts by the natural epimorphism $\HH^1(E,\CCC^{\fatdot})\rar H^1(E,\mathcal{E}nd(\EEE))$
coming from the exact triple (\ref{exect}). We have $M_E(2,-1)\simeq E$, and $\pi$ is an affine bundle over $E$ with fiber $\CC^8$, locally trivial
in the Zariski topology.
\end{proposition}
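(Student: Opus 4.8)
The plan is to read off the two local charts from the preceding Lemma together with Corollary~\ref{coro Luna}, to identify $\pi$ in those charts with the edge homomorphism of~\eqref{exect}, and then to upgrade the local fibre computation to a global affine-bundle statement over $M_E(2,-1)$ by a relative Atiyah-class argument.

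First I would establish the local charts. Since $\EEE$ is stable, every $\Lambda$-submodule of $\EEE$ is in particular an $\OOO_E$-submodule, so $(\EEE,\nabla)$ is a stable $\Lambda$-module, whence (as already recorded) $\Aut(\EEE,\nabla)$ consists only of homotheties and acts trivially on the Kuranishi space $K$. Combined with the vanishing $\HH^2(E,\CCC^{\fatdot})=0$ noted above, this makes $K$ smooth with $T_0K=\HH^1(E,\CCC^{\fatdot})\simeq\CC^9$, so that Corollary~\ref{coro Luna} — a neighbourhood of $(\EEE,\nabla)$ in $\CCC^0_E$ being an open piece of the moduli space of $\Lambda$-modules to which the corollary applies — identifies the germ $(\CCC^0_E,(\EEE,\nabla))=(K,0)//\Aut(\EEE,\nabla)$ with the germ of $\HH^1(E,\CCC^{\fatdot})$ at $0$ (in the \'etale, equivalently analytic, topology, as $\CCC^0_E$ is not a fine moduli space). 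The same argument applied with $\Lambda=\OOO_E$, using $H^2(E,\mathcal{E}nd(\EEE))=0$ and triviality of $\Aut(\EEE)$, identifies the germ of $M_E(2,-1)$ at $[\EEE]$ with the germ of $H^1(E,\mathcal{E}nd(\EEE))\simeq\CC$ at $0$. Since the forgetful map on deformations $(\FFF,\nabla_{\FFF})\mapsto\FFF$ is compatible with Kodaira--Spencer maps, in these charts $\pi$ is a morphism of smooth germs whose differential at $0$ is the edge homomorphism $\HH^1(E,\CCC^{\fatdot})\rar H^1(E,\mathcal{E}nd(\EEE))$ of the spectral sequence, i.e. the surjection in~\eqref{exect}; as a submersion of smooth germs is a linear projection in suitable coordinates, $\pi$ is represented by that epimorphism, with fibre its kernel $H^0(E,\mathcal{E}nd(\EEE)\otimes\Omega^1_E(D))\simeq\CC^8$ (using $\Omega^1_E\simeq\OOO_E$ and the Lemma).

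For the global statement I would first invoke Atiyah's classification: the determinant gives $M_E(2,-1)\isoto\Pic^{-1}(E)\simeq E$, and since $\gcd(2,1)=1$ this moduli space is fine, carrying a universal bundle $\UUU$ on $E\times M_E(2,-1)$. Writing $p$ for the projection to $M_E(2,-1)$, set $\mathcal{V}:=p_{\ast}\big(\mathcal{E}nd(\UUU)\otimes\pr_E^{\ast}\Omega^1_E(D)\big)$. The numerical conclusions of the Lemma hold for every stable rank-$2$ degree-$(-1)$ bundle (they use only $\dim H^0(E,\mathcal{E}nd(\EEE))=1$ and Riemann--Roch), so on every fibre of $p$ the relevant $H^1$ vanishes and the $H^0$ has dimension $8$; by cohomology and base change $\mathcal{V}$ is then locally free of rank $8$, its formation commutes with base change, and $R^1p_{\ast}=0$. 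For a morphism $g\colon S\rar M_E(2,-1)$, relative connections with polar divisor $D$ on $(\id\times g)^{\ast}\UUU$ form a torsor under $H^0(S,g^{\ast}\mathcal{V})$ with obstruction class in $H^1(S,g^{\ast}\mathcal{V})$, and twisting $\UUU$ by a line bundle pulled back from $S$ changes nothing here because such a line bundle carries a canonical flat relative connection; matching this against the definition of $\CCC^0_E$, I would conclude that $\CCC^0_E$ is the total space of a $\mathcal{V}$-torsor over $M_E(2,-1)$, classified by a class in $H^1(M_E(2,-1),\mathcal{V})$. A torsor under a coherent sheaf is trivial over affine opens, so $\pi$ is an affine bundle with fibre $\CC^8$, locally trivial in the Zariski topology, with structure group the affine group $\CC^8\rtimes\GL(8,\CC)$; in particular $\pi$ is surjective, each stable $\EEE$ admitting a connection with polar divisor $D$ since $\At^D(\EEE)\in H^1(E,\mathcal{E}nd(\EEE)\otimes\Omega^1_E(D))=0$.

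The hard part will be the last paragraph: constructing $\mathcal{V}$ with base-change-compatible formation and realising $\CCC^0_E$ as a $\mathcal{V}$-torsor even though $\CCC^0_E$ is not a fine moduli space — the observation that the base-pulled-back twist of $\UUU$ carries a canonical relative connection is precisely what makes the torsor structure well defined. The two chart identifications and the shape of $\pi$ in them are comparatively routine consequences of the preceding Lemma, Corollary~\ref{coro Luna}, and Atiyah's theorem.
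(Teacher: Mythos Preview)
Your argument is correct, but it takes a genuinely different route from the paper for the Zariski-local triviality. The paper's proof is very short: it invokes the existence of universal families over Luna slices to get that $\pi$ is an affine bundle locally trivial in the \'etale topology (this is essentially Theorem~\ref{struc}(ii)), and then upgrades to Zariski-local triviality by citing Grothendieck's result that over a smooth projective curve any \'etale-locally-trivial affine bundle is already Zariski-locally trivial, a consequence of the vanishing of the Brauer group of a curve.

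You instead exploit the special arithmetic of this example: since $\gcd(2,-1)=1$, Atiyah's classification makes $M_E(2,-1)$ a \emph{fine} moduli space with a global universal bundle $\UUU$, so you can form the rank-$8$ bundle $\mathcal{V}=p_{\ast}(\mathcal{E}nd(\UUU)\otimes\pr_E^{\ast}\Omega^1_E(D))$ directly and realise $\CCC^0_E$ as a $\mathcal{V}$-torsor; Zariski-local triviality then follows from the vanishing of coherent $H^1$ on affines, with no appeal to \'etale cohomology or Brauer groups. Your identification of $\CCC^0_E$ with this torsor is sound once one notes that for stable $\EEE$ the automorphisms are scalars and therefore act trivially on the affine space of connections, so isomorphism classes of pairs match actual connections on the chosen $\UUU_{[\EEE]}$. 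What you gain is a more elementary and self-contained argument; what the paper's approach buys is generality---it works verbatim for any $(r,d)$ with $M^s_X(r,d)\neq\varnothing$ over a curve, whereas your construction of a global $\mathcal{V}$ needs fineness of the base moduli space. Your treatment of the local charts via Corollary~\ref{coro Luna} is also more detailed than the paper's, which leaves that part implicit.
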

\begin{proof}
The existence of universal families of pairs $(\EEE,\nabla)$ over Luna slices locally in the \'etale topology
implies that $\pi$ is an affine bundle, locally trivial in the \'etale topology. But according to Grothendieck \cite{Gro-1}, over a curve,
any affine bundle, locally trivial in the \'etale topology is also locally trivial in the Zariski topology; this follows
from the vanishing of the Brauer group of any smooth projective curve.
\end{proof}

Now, we turn to the case where $\EEE$ is unstable. Then $\pi:\CCC_E\dasharrow M_E(2,-1)$  exists as a rational map in a neighborhood
of $(\EEE,\nabla)$, and it is undefined at $(\EEE,\nabla)$. Then there exists $\LLL\in\Pic^0(E)$ (see Proposition 13 of \cite{Machu-1}) such that
$\EEE=\OOO_E(-\infty)\oplus\LLL$. We have $\mathcal{E}nd(\EEE)\simeq\OOO^{\oplus 2}\oplus\LLL^*(-\infty)\oplus\LLL(\infty)$,
hence $\dim H^i(\mathcal{E}nd(\EEE))=3\ (i=0,1)$, $\dim H^0(E,\mathcal{E}nd(\EEE)\otimes\Omega_X^1(D))=8$ and
$\dim H^1(E,\mathcal{E}nd(\EEE)\otimes\Omega_X^1(D))=0$. Hence $\HH^2(E,\CCC_E^{\fatdot})=0$ and the deformations of 
$(\EEE,\nabla)$ are unobstructed. The description of $\HH^1(E,\CCC_E^{\fatdot})$  depends on the rank of the map
$d_1: H^0(\mathcal{E}nd(\EEE))\rar H^0(E,\mathcal{E}nd(\EEE)\otimes\Omega_X^1(D))$ in the long exact sequence (\ref{lgexs}).
For instance, if $\nabla$ is the direct image of the regular connection $\ud+\omega$ on the trivial bundle $\OOO_C$,
we see from formulas 8-10 of Section 4 of \cite{Machu-1} that \begin{displaymath}d_1:B=\left( \begin{array}{cc} b_{11} & b_{12} \\ 0 & b_{22} \end{array}\right)\mapsto [A,B] \end{displaymath} is of rank $1$ if $\lambda_1=0$ and of rank $2$ otherwise. But if $\lambda_1=0$, then $(\EEE,\nabla)$ is the direct sum 
of rank-$1$ connections and hence is unstable. Thus it does not represent a point of $\CCC_E$. Similarly, using formulas
from Proposition 13 of \cite{Machu-1}, we can show that $\rk d_1=2$ in all the remaining cases in which $(\EEE,\nabla)$ is stable, but $\EEE$ is not.
We conclude that there is the exact triple
\arraycolsep=0.5ex
\begin{eqnarray*} 0&\rar& H^0(E,\mathcal{E}nd(\EEE)\otimes\Omega_X^1(D))/d_1(H^0(E,\mathcal{E}nd(\EEE))\simeq\CC^6\rar\HH^1(E,\CCC_E^{\fatdot})\simeq\CC^9\\
&\rar& H^0(E,\mathcal{E}nd(\EEE))\simeq\CC^3\rar 0.\end{eqnarray*}
Thus $\CCC_E$ is smooth of dimension $9$, as in the case when $\EEE$ is stable.
Note that in the base of the versal deformation of $(\EEE,\nabla)$, there is a hypersurface parameterizing
the deformations of $(\EEE,\nabla)$ with $\EEE$ unstable. It is defined by the vanishing of the component 
corresponding to the factor $H^1(E,\LLL^{*}(-\infty))\subset H^1(E,\mathcal{E}nd(\EEE))$.
The complement to this $8$-dimensional hypersurface corresponds to stable $\EEE$'s, so that $\CCC_E$ can be viewed
as a partial compactification of $\CCC^0_E$ at infinity (at the limit $"\nabla\rar\infty"$).
Moreover as $\det:M_E(2,-1)\rar E$ is an isomorphism, and $\det$ is well-defined on $\CCC_E$, so that $\CCC_E\setminus\CCC_E^0$ is 
a removable singularity of $\pi$,  locally near $(\EEE,\nabla)$.
We conclude:
\begin{theorem}
Let $(\EEE,\nabla)$ be a stable direct image connection, as in Section 4 of \cite{Machu-1}, with $\EEE$ unstable.
Then the rational map $\pi:\CCC_E\dasharrow M_E(2,-1)$ is regular at $(\EEE,\nabla)$ and is a smooth partial compactification of 
the affine bundle $\CCC_E^0\rar M_E(2,-1)$ with fiber $\CC^8$.

The infinitesimal deformations of $(\EEE,\nabla)$ are unobstructed, and $\HH^1(E,\CCC^{\fatdot})\simeq\CC^9$ is
a local chart of $\CCC_E$ near $(\EEE,\nabla)$.
\end{theorem}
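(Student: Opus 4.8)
The plan is to read everything off Corollary \ref{coro Luna}, which identifies the germ $(\CCC_E,(\EEE,\nabla))$ with $(K,0)//H$, where $K$ is the Kuranishi space of $(\EEE,\nabla)$ and $H=\Aut(\EEE,\nabla)$. Thus it suffices to: (i) compute $\HH^{\fatdot}(E,\CCC_E^{\fatdot})$, in particular to see that $\HH^2(E,\CCC_E^{\fatdot})=0$, so that $K$ is smooth and equal to $\HH^1(E,\CCC_E^{\fatdot})$; (ii) check that $H$ acts trivially on $K$, so that $(K,0)//H=(K,0)$; (iii) promote the rational map $\pi$ to a morphism near $(\EEE,\nabla)$; and (iv) identify the boundary $\CCC_E\setminus\CCC_E^0$ near $(\EEE,\nabla)$ with a smooth divisor.

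For (i) I would begin from the splitting $\EEE\simeq\OOO_E(-\infty)\oplus\LLL$, $\LLL\in\Pic^0(E)$, provided by \cite{Machu-1}, which gives $\mathcal{E}nd(\EEE)\simeq\OOO_E^{\oplus 2}\oplus\LLL^*(-\infty)\oplus\LLL(\infty)$; a line-bundle count on the elliptic curve then yields $\dim H^i(E,\mathcal{E}nd(\EEE))=3$ $(i=0,1)$, $\dim H^0(E,\mathcal{E}nd(\EEE)\otimes\Omega_X^1(D))=8$ and $\dim H^1(E,\mathcal{E}nd(\EEE)\otimes\Omega_X^1(D))=0$, where $D=p_++p_-$. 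Since $H^2(E,\mathcal{E}nd(\EEE))=0$ as well, the hypercohomology long exact sequence (\ref{lgexs}) of $\CCC_E^{\fatdot}$ immediately forces $\HH^2(E,\CCC_E^{\fatdot})=0$, i.e. the deformations of $(\EEE,\nabla)$ are unobstructed. To pin down $\dim\HH^1$ and $\dim\HH^0$ I need the rank of $d_1\colon H^0(E,\mathcal{E}nd(\EEE))\rar H^0(E,\mathcal{E}nd(\EEE)\otimes\Omega_X^1(D))$, $B\mapsto[A,B]$, and the claim --- this is the heart of the matter --- is that $\rk d_1=2$: from the explicit formulas of Section~4 of \cite{Machu-1} and of Proposition~13 there, $\rk d_1$ could only drop to $1$ when a certain diagonal entry of $A$ vanishes, but in that case $(\EEE,\nabla)$ is a direct sum of rank-$1$ connections, hence unstable, contrary to hypothesis. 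Granting $\rk d_1=2$, the exact triple exhibited just above the statement gives $\dim\HH^1(E,\CCC_E^{\fatdot})=(8-2)+3=9$, while $\HH^0(E,\CCC_E^{\fatdot})=\ker(d_1|_{H^0})$ is one-dimensional.

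For (ii): since $\HH^2(E,\CCC_E^{\fatdot})=0$ the Kuranishi map vanishes identically, so $K$ is smooth and $K\simeq\HH^1(E,\CCC_E^{\fatdot})\simeq\CC^9$; and since $\HH^0(E,\CCC_E^{\fatdot})$ is one-dimensional, $H=\Aut(\EEE,\nabla)=\CC^{*}$ consists of the scalar automorphisms, which act trivially on all the deformation data by conjugation, so $(K,0)//H=(K,0)$. Hence the germ of $\CCC_E$ at $(\EEE,\nabla)$ is smooth of dimension $9$, with $\HH^1(E,\CCC_E^{\fatdot})$ as a local chart --- in the classical or \'etale topology, since the universal pair over a Luna slice exists only locally in those topologies. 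For (iii): the determinant is everywhere defined on $\CCC_E$ (it only involves the underlying sheaf), and under the isomorphism $\det\colon M_E(2,-1)\xrightarrow{\sim}\Pic^{-1}(E)\simeq E$ it becomes a morphism $\CCC_E\rar M_E(2,-1)$; this morphism agrees with $\pi$ on the dense open subset $\CCC_E^0$, so it is the regular extension of $\pi$, which in particular is therefore defined at $(\EEE,\nabla)$. Finally, for (iv), in the chart $\HH^1(E,\CCC_E^{\fatdot})$ the locus $\CCC_E\setminus\CCC_E^0$ of pairs with unstable underlying bundle is cut out by the vanishing of the single coordinate attached to the summand $H^1(E,\LLL^*(-\infty))\subset H^1(E,\mathcal{E}nd(\EEE))$, hence is a smooth divisor through $(\EEE,\nabla)$; its complement $\CCC_E^0$ is the $\CC^8$-affine bundle over $M_E(2,-1)$ of the Proposition above, so near $(\EEE,\nabla)$ the smooth $9$-fold $\CCC_E$ is a smooth partial compactification of that affine bundle, which is the assertion. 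The one genuinely computational step is the rank-$2$ claim for $d_1$ in (i); the rest is homological bookkeeping together with the determinant trick.
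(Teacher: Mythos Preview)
Your proof is correct and follows essentially the same approach as the paper: the same cohomology computation from the splitting $\EEE\simeq\OOO_E(-\infty)\oplus\LLL$, the same use of the formulas from \cite{Machu-1} to establish $\rk d_1=2$, the same determinant trick to extend $\pi$, and the same identification of the unstable locus as the divisor given by the vanishing of the $H^1(E,\LLL^*(-\infty))$-coordinate. You make the step through $H=\Aut(\EEE,\nabla)=\CC^*$ acting trivially a bit more explicit than the paper does, but this is just a clean restatement of what stability of $(\EEE,\nabla)$ already gives.
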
 

We remark that the local triviality of $\pi$ is not global. For example, $\CCC_E$ contains a pair $(\EEE,\nabla)$
with $\EEE=(\OOO_E(-\infty),\LLL)$ as above, and \begin{displaymath}A=\left( \begin{array}{cc} 0 & \ud z \\ 0 & 0 \end{array}\right)
.\end{displaymath}
This pair is stable, since the unique destabilizing subbundle $\LLL$ in $\EEE$ is not $\nabla$-invariant, but $\rk d_1=1$ and
$\dim\HH^1(\CCC^{\fatdot})=10>\dim \CCC_E$.
\subsection*{Acknowledgements}
I am greatly indebted to my former research advisor D.~Markushevich for his encouragement and help.

\renewcommand\refname{References}

\end{document}